\numberwithin{equation}{section}
\theoremstyle{plain}
\newtheorem{thmx}{Theorem}
\renewcommand{\thethmx}{\Alph{thmx}} 
\newtheorem{thm}{Theorem}[section]  
\newtheorem{lem}[thm]{Lemma}
\newtheorem{proposition}[thm]{Proposition}
\newtheorem{cor}[thm]{Corollary}
\newtheorem{corx}[thmx]{Corollary}
\newtheorem{claim}[thm]{Claim}
\theoremstyle{definition}
\newtheorem{dfn}[thm]{Definition}
\theoremstyle{remark}
\newtheorem{rem}[thm]{Remark} 
\crefname{lem}{Lemma}{Lemmas} 
\crefname{conjecture}{Conjecture}{Conjectures}
\crefname{thm}{Theorem}{Theorems}
\crefname{proposition}{Proposition}{Propositions}
\crefname{dfn}{Definition}{Definitions}
\crefname{rem}{Remark}{Remarks}
\crefname{cor}{Corollary}{Corollaries}
\crefname{corx}{Corollary}{Corollaries}
\crefname{problem}{Problem}{Problems}
\crefname{thmx}{Theorem}{Theorems}
\crefname{claim}{Claim}{Claims}
\crefname{assumption}{Assumption}{Assumptions}
\crefname{main}{Main Theorem}{Main Theorems}
\theoremstyle{plain}
\newlist{thmlist}{enumerate}{1}
\setlist[thmlist]{label={\rm(\roman*)}, ref=\thethm{\rm(\roman{thmlisti})}}
\newlist{thmenum}{enumerate}{1} 
\setlist[thmenum]{label={\rm(\roman*)}, ref=\thethmx\rm{(\roman{thmenumi})}}
\newlist{corlist}{enumerate}{1} 
\setlist[corlist]{label={\rm(\roman*)}, ref=\thecorx\rm{(\roman{corlisti})}}
\setlist[enumerate,1]{label={\rm(\arabic*)}, ref=({\rm\arabic*})}
\newsavebox{\@brx}
\newcommand{\llangle}[1][]{\savebox{\@brx}{\(\m@th{#1\langle}\)}%
	\mathopen{\copy\@brx\kern-0.5\wd\@brx\usebox{\@brx}}}
\newcommand{\rrangle}[1][]{\savebox{\@brx}{\(\m@th{#1\rangle}\)}%
	\mathclose{\copy\@brx\kern-0.5\wd\@brx\usebox{\@brx}}}
\def\ep{\varepsilon}
\def\p{p}
\def\rank{\operatorname{rank}}
\def\uni{\operatorname{uni}}
\def\tr{\operatorname{tr}}
\def\Im{\operatorname{Im}}
\def\hod{\operatorname{hod}}
\def\Sym{\operatorname{Sym}}
\def\deg{\operatorname{deg}}
\def\ord{\operatorname{ord}}
\newcommand{\cD}{\mathcal D}
\newcommand{\cU}{\mathcal U}
\newcommand{\cW}{\mathcal W}
\newcommand{\cO}{\mathcal O}
\newcommand{\kL}{\mathfrak L} 
\newcommand{\bC}{\mathbb{C}}
\newcommand{\bP}{\mathbb{P}}
\newcommand{\bZ}{\mathbb{Z}}
\def\db{\bar{\partial}}
\def\d{\partial} 
\def\hess{{\rm d}{\rm d}^{\rm c}}
\def\lra{\longrightarrow}
\def\sn{\sqrt{-1}}
\def\End{\operatorname{End}}
\newcommand{\fa}{\mathbf{a}}
\newcommand{\fb}{\mathbf{b}} 
\newcommand{\diae}{{}^\diamond\! E}
\newcommand{\dia}{{}^\diamond\!}
\newcommand{\pae}{\mathcal{P}_{\!\bm{a}}E}
\newcommand{\pf}{\mathcal{P}_{\!\bm{1}}}
\newcommand{\pahe}{\mathcal{P}^h_{\!\bm{a}}E}
\newcommand{\pbe}{\mathcal{P}_{\!\bm{b}}E}
\newcommand{\shb}{(E=\oplus_{p+q=m}E^{p,q},\theta,h)}
\newcommand{\supth}[1]{\ensuremath{#1^{\mathrm{th}}}}
\title{Big Picard theorems and algebraic hyperbolicity   for  varieties  admitting a variation of Hodge structures}
\author{Ya Deng} 
\address{CNRS, Institut \'Elie Cartan de Lorraine, Universit\'e de Lorraine, F-54000 Nancy, France}
\email{ya.deng@math.cnrs.fr}
\begin{document}


\maketitle

\begin{prelims}

\DisplayAbstractInEnglish

\bigskip

\DisplayKeyWords

\medskip

\DisplayMSCclass

\end{prelims}


\newpage

\setcounter{tocdepth}{1}

\tableofcontents


\section{Introduction} 
\subsection{Background}
The classical big Picard theorem says that any holomorphic map  from the punctured disk $\Delta^*$  into $\bP^1$
which omits three points can be extended to a
holomorphic map $\Delta\to \bP^1$, where $\Delta$ denotes the unit disk.
Therefore, we introduce a new notion of hyperbolicity which generalizes the big Picard theorem. We say a complex manifold $U$ is \emph{quasi-compact K\"ahler} if it is a Zariski open subset of a compact K\"ahler manifold $Y$.  Such a $Y$ will be called a  \emph{smooth K\"ahler compactification} of $U$.

\begin{dfn}[Picard hyperbolicity]\label{def:Picard}
A  quasi-compact K\"ahler manifold $U$ is called \emph{Picard hyperbolic}    if there is a smooth K\"ahler  compactification $Y$ of  $U$ such that any holomorphic map $f\colon\Delta^*\to U$ extends to a holomorphic map $\bar{f}\colon\Delta\to Y$.
\end{dfn}

We will prove in \cref{lem:well-def} that this definition does not depend on the compactification of $U$.   Picard hyperbolic varieties first attracted the author's interest because of the recent interesting work \cite{JK18} by   Javanpeykar--Kucharczyk   on the \emph{algebraicity of analytic maps}. In \cite[Definition 1.1]{JK18}, they introduce a new notion of hyperbolicity: a reduced quasi-projective variety  $U$ is \emph{Borel hyperbolic} if  any holomorphic map from a quasi-projective variety to $U$ is   \emph{algebraic}.  In  \cite[Corollary 3.11]{JK18}, they prove  that a Picard hyperbolic variety is    {Borel hyperbolic}. We refer the readers to \cite[Section~1]{JK18}
for their motivation on the Borel hyperbolicity.  Picard hyperbolic varieties fascinate us further when we realize in  \Cref{extension theorem} that  a more general extension theorem is also valid for them: any holomorphic map from $\Delta^p\times (\Delta^*)^q$ to the manifold $U$ in \Cref{def:Picard} extends to a \emph{meromorphic map} from $\Delta^{p+q}$ to $Y$. 

By Borel \cite{Bor72} and Kobayashi--Ochiai \cite{KO71}, it has long been known to us that the quotients of bounded symmetric domains by torsion-free arithmetic lattices are hyperbolically embedded into their Baily--Borel compactification, and thus they are Picard hyperbolic (see \cite[Theorem 6.1.3]{Kob98}).  Period domains, first introduced by Griffiths \cite{Gri68} and   later systematically studied  by him in the seminal work \cite{Gri68b,Gri70,Gri70b}, are classifying spaces of polarized Hodge structures and are  a generalization of bounded symmetric domains. 
  In \cite[Section 1.1]{JK18},  
  Javanpeykar--Kucharczyk conjectured that an algebraic variety $U$ which admits an \emph{integral variation of Hodge structures} ($\bZ$-VHS for short) with quasi-finite period map is Borel hyperbolic. 
  Their conjecture was recently proved in   a  remarkable work of  Bakker--Brunebarbe--Tsimerman \cite{BBT18}.   The  proof is based on the tame geometry for quotients $\faktor{\cD}{\Gamma}$  of period domains $\mathcal{D}$ by arithmetic groups $\Gamma$ containing the monodromy group of the $\bZ$-VHS. However, when one studies Picard hyperbolicity (or Borel hyperbolicity) for varieties admitting the more general \emph{complex polarized variation of Hodge structures} ($\bC$-PVHS for short), there are several problems which seem difficult to tackle if one uses o-minimal geometry in \cite{BBT18}:
\begin{itemize}
	\item The monodromy group $\Gamma$ might   act non-discretely on the period domain $\cD$, and thus the quotient of the period domain by the monodromy group $\faktor{\cD}{\Gamma}$ is even non-Hausdorff.
	\item The local monodromies at infinity might not be quasi-unipotent, though it always is  for a $\bZ$-VHS by the theorem of Borel. 
\end{itemize}
 Therefore, the great differences between a $\bZ$-VHS and a $\bC$-PVHS require   new ideas if one would like to extend the theorem by Bakker--Brunebarbe--Tsimerman to the context of $\bC$-PVHSs.  This is one of the main goals in this paper. 
 In the first part, we prove the Picard hyperbolicity of quasi-compact K\"ahler manifolds admitting a  $\bC$-PVHS using techniques in complex-analytic geometry and Hodge theory.

\subsection{Big Picard theorem and algebraic hyperbolicity}  

\begin{thmx}\label{main}
  Let $U$ be a quasi-compact K\"ahler manifold.  Assume that there is a $\bC$-PVHS over $U$  so that  each fiber of the period map $U_{\uni}\to \mathcal{D}$ is zero-dimensional, where $U_{\uni}$ is the universal cover of $U$ and $\mathcal{D}$ is the period domain associated to the above $\bC$-PVHS.  Then    $U$ is both algebraically hyperbolic and Picard hyperbolic. In particular, $U$ is Borel hyperbolic.
\end{thmx}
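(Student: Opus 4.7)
The plan is to build a Finsler pseudo-metric on $TU$ directly out of the $\bC$-PVHS whose holomorphic sectional curvature is bounded above by a strictly negative constant, and which has at worst Poincar\'e-type logarithmic growth along the boundary of a log compactification of $U$. Once such a metric is in hand, algebraic hyperbolicity follows from an Ahlfors--Schwarz/Demailly-type area estimate, and Picard hyperbolicity follows from a Kwack--Noguchi-style extension argument across the puncture.

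First, I would fix a smooth K\"ahler compactification $Y \supset U$ with $D := Y \setminus U$ a simple normal crossing divisor, possible after blowing up. From the $\bC$-PVHS, the associated system of Hodge bundles $(E = \bigoplus E^{p,q}, \theta, h)$ carries a Higgs field $\theta$ whose components $\theta^{p,q} \colon E^{p,q} \to E^{p-1,q+1} \otimes \Omega_U^1$ encode the differential of the period map. The infinitesimal period map $TU \to \bigoplus \sHom(E^{p,q}, E^{p-1,q+1})$ is fibrewise injective exactly when the period map has zero-dimensional fibers, which is our hypothesis. Using Griffiths' curvature formula for the Hodge metric on the horizontal tangent bundle of $\cD$, the pull-back to $U$ of a suitable multiple of this metric via the period map produces a Finsler pseudo-metric $F$ on $TU$ of holomorphic sectional curvature bounded above by $-c < 0$ wherever the differential of the period map is injective, hence everywhere on $U$.

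Second, I would extend $F$ through the boundary $D$. This is where the main difficulty lies, since the local monodromies around components of $D$ need not be quasi-unipotent in the $\bC$-PVHS setting. The natural tool is Mochizuki's theory of tame harmonic bundles and parabolic/filtered extensions: one obtains a canonical parabolic extension of $(E,\theta,h)$ to $Y$, and the growth of sections of $E$ relative to $h$ near $D$ is governed by a Poincar\'e-type metric $\omega_{\rm P}$ on $Y$ with cusps along $D$. Combined with Griffiths transversality and the explicit form of the Hodge metric on the horizontal tangent bundle, this yields a pointwise bound $F \leq C\, \omega_{\rm P}$ for some constant $C>0$, i.e., a Poincar\'e-type upper bound on the constructed Finsler metric.

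Third, both hyperbolicity statements follow from $F$. For algebraic hyperbolicity, for any integral curve $C \subset Y$ not contained in $D$, the Ahlfors--Schwarz lemma applied on the normalization of $C$ with respect to $F$ gives an inequality of the form $2g(\widetilde{C}) - 2 + \#(\widetilde{C} \cap D) \geq c_1 \deg_{\omega_Y}(C)$, which is the desired Demailly bound. For Picard hyperbolicity, given $f \colon \Delta^* \to U$, the pull-back of $F$ to $\Delta^*$ is dominated by the Poincar\'e metric on $\Delta^*$, so $f$ has bounded growth in any Hermitian metric on $Y$ near $0$; a Kwack/Noguchi-style big Picard extension then yields the desired $\bar{f} \colon \Delta \to Y$. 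Borel hyperbolicity is then immediate from the cited result of Javanpeykar--Kucharczyk. The hardest step will clearly be the construction and boundary control of the Finsler metric in the $\bC$-PVHS setting without quasi-unipotent monodromy, which forces reliance on Mochizuki's rather deep theory of tame pure polarized twistor $D$-modules.
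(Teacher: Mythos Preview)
Your proposal contains a genuine gap at the very first step. You assert that ``the infinitesimal period map $TU \to \bigoplus \sHom(E^{p,q}, E^{p-1,q+1})$ is fibrewise injective exactly when the period map has zero-dimensional fibers.'' This is false: a holomorphic map can have discrete fibers while its differential vanishes along a proper analytic subset (think of $z\mapsto z^2$). The hypothesis of the theorem only guarantees that the period map is \emph{generically} immersive, so the Finsler metric you pull back from the horizontal tangent bundle of $\cD$ will only be positive definite on a dense Zariski open set $U^\circ\subset U$. For any curve $\gamma:\Delta^*\to U$ (or any compact curve $C\subset Y$) whose image lies entirely in the degeneracy locus $U\setminus U^\circ$, your curvature inequality is vacuous and neither the Ahlfors--Schwarz estimate nor the Kwack--Noguchi argument applies. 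The paper confronts exactly this issue with a stratification: one desingularizes each irreducible component of $Y\setminus U^\circ$ not contained in $D$, pulls back the $\bC$-PVHS (whose period map is again generically immersive on the stratum, since restriction preserves the zero-dimensional-fiber hypothesis), constructs a new Finsler metric there, and iterates. A curve must meet the open stratum of \emph{some} member of this finite stratification, and one applies the extension criterion or the genus bound on that piece.

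Two further differences are worth flagging. First, the paper does not simply pull back Griffiths' horizontal metric; it builds a special system of log Hodge bundles containing a big line bundle $L$ with $L-D$ still big (\cref{prop:new}), proves an infinitesimal Torelli-type statement (\cref{Deng}), and takes a carefully weighted convex sum of Finsler metrics $F_k$ coming from iterated Higgs fields. The payoff is the curvature lower bound $\hess\log|\gamma'|_F^2\ge \gamma^*\omega$ for a \emph{smooth K\"ahler form $\omega$ on the compactification $Y$}, not merely $\ge c\,|\gamma'|_F^2$. Second, the Picard extension is not obtained via Kwack--Noguchi but via a Nevanlinna-theoretic criterion (\cref{thm:criteria} from \cite{DLSZ}) that consumes precisely this stronger inequality. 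Your bounded-growth-plus-Kwack sketch would need substantial elaboration even in the immersive case, and in any event cannot bypass the stratification issue above.
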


Note that we make no assumptions on the local monodromies of the $\bC$-PVHS at infinity (which can be non--quasi-unipotent) or on its global monodromy group (thus it might act non-discretely on $U$). 
Let us mention that when the $\bC$-PVHS over $U$ in \cref{main} is moreover a $\bZ$-PVHS, the Borel hyperbolicity of  $U$ in \cref{main} has been proven in \cite[Corollary 7.1]{BBT18}, and the algebraic hyperbolicity of   $U$  is implicitly shown by Javanpeykar--Litt in \cite[Theorem 4.2]{JL19} if the local monodromies of the $\bZ$-VHS at infinity are unipotent (see \Cref{JL}).  Our proof  of \cref{main} is based on complex-analytic and Hodge-theoretic methods, and it does not use the delicate o-minimal geometry in \cite{PS08,PS09,BKT18,BBT18}. Indeed, we are not sure that the tame geometry for period domains of $\bZ$-PVHSs in \cite{BKT18,BBT18} can be applied to prove  \cref{main}.

We can even generalize \cref{main} to higher-dimensional domain spaces.

\begin{cor}[= \cref{main} + \Cref{extension theorem}]\label{extension}
Let $U$ be the quasi-compact K\"ahler manifold   in \cref{main}, and let $Y$ be a smooth K\"ahler compactification of\, $U$. Then  any  holomorphic map $f\colon\Delta^p\times (\Delta^*)^q\to U$ extends to a meromorphic map $\overline{f}\colon\Delta^{p+q}\dashrightarrow Y$. In particular, if    $W$ is a Zariski open subset of a compact complex manifold $X$, then any holomorphic map  $g\colon W\to U$ extends to a meromorphic map $\overline{g}\colon X\dashrightarrow Y$.
\end{cor}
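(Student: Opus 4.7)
The plan is to deduce the corollary from Theorem A by a classical graph-closure argument, upgrading the one-variable Picard extension to a meromorphic extension in several variables. Fix a smooth Kähler compactification $Y$ of $U$ and set $E := Y \setminus U$. Given $f : \Delta^p \times (\Delta^*)^q \to U$, consider the graph
\[
\Gamma_f := \{((x,w), f(x,w)) : (x,w) \in \Delta^p \times (\Delta^*)^q\} \subset \Delta^{p+q} \times Y,
\]
and let $\overline{\Gamma_f}$ be its closure in $\Delta^{p+q} \times Y$. Since $Y$ is compact, the projection $\pi_1 : \overline{\Gamma_f} \to \Delta^{p+q}$ is proper. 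The task reduces to showing that $\overline{\Gamma_f}$ is an analytic subset of $\Delta^{p+q} \times Y$; once this is known, $\overline{\Gamma_f}$ is the graph of the desired meromorphic extension $\overline{f}$, because on the open dense locus $\Delta^p \times (\Delta^*)^q$ it is already the graph of a holomorphic map.

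The first step is a slice-wise one-variable extension. For every $(x_0, w_0) \in \Delta^p \times (\Delta^*)^{q-1}$ and every coordinate direction, restriction of $f$ to the corresponding copy of $\Delta^*$ is a holomorphic map into $U$, which by the Picard hyperbolicity part of \cref{main} extends to $\Delta \to Y$. In particular, above each point of the boundary divisor $\partial := \Delta^{p+q} \setminus (\Delta^p \times (\Delta^*)^q)$ that lies on only one irreducible component, the closure $\overline{\Gamma_f}$ contains at least one point, and no ``escape to infinity'' inside $Y$ is possible along such a boundary approach. This already implies that the fiber of $\pi_1$ over any smooth boundary point is non-empty and, together with the compactness of $Y$, bounds the closure in the neighborhood of codimension-one boundary strata.

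The second step is to promote these slice extensions to a global meromorphic extension. Here I would invoke the standard Hartogs-type meromorphic extension theorem of Siu (and Ivashkovich), which states that if $\pi_1 : \overline{\Gamma_f} \to \Delta^{p+q}$ has bounded fibers along a dense set of slices transverse to $\partial$ and one already has analytic extension along generic one-parameter transversals, then $\overline{\Gamma_f}$ extends across $\partial$ as an analytic subset of $\Delta^{p+q} \times Y$. The Kähler structure on $Y$ enters through the volume bound on $\Gamma_f$ (since $f$ lifts to a section of a trivial bundle on which a Kähler form lives), which is precisely the hypothesis needed by Bishop's theorem on extension of analytic sets across pluripolar singular sets. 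Combining the fiber-wise Picard extension from Step~1 with these two analytic-extension theorems yields analyticity of $\overline{\Gamma_f}$.

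Finally, for the second half of the corollary, given a holomorphic map $g : W \to U$ where $W$ is Zariski open in a compact complex manifold $X$, one may, after blowing up, assume $X \setminus W$ is a simple normal crossing divisor. Around each boundary point of $X$ one then finds a polydisk neighborhood on which the situation is exactly $\Delta^p \times (\Delta^*)^q$, so the first part produces a local meromorphic extension, and these local extensions glue to a global meromorphic map $\overline{g} : X \dashrightarrow Y$. The main obstacle I anticipate is Step~2: the bare one-parameter Picard extension does not automatically provide an analytic extension across higher-codimension strata of $\partial$, and extracting this from the available tools (Siu/Ivashkovich/Bishop) requires a careful verification of the volume and fiber-dimension bounds coming from the Hodge-theoretic setup of \cref{main}.
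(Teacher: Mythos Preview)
Your overall shape is right—slice-wise Picard extension plus a Siu-type meromorphic extension theorem—but you are making Step~2 much harder than it needs to be, and as stated that step is not justified. You do not need a graph-closure argument, Bishop's theorem, or any volume bounds coming from the Hodge-theoretic setup. The paper's proof of \Cref{extension theorem} is a two-line reduction to two black-box results from Siu's paper \cite{Siu75}, neither of which requires any input beyond ``$Y$ is compact K\"ahler'' and ``each punctured-disk slice extends.''

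Concretely, the paper argues as follows. First, by \cite[Theorem~1]{Siu75}, any meromorphic map from a Zariski open set $Z^\circ\subset Z$ into a compact K\"ahler manifold extends across the locus where $Z\setminus Z^\circ$ has codimension at least~$2$. This immediately reduces the problem to the case $q=1$, i.e.\ to extending $f:\Delta^r\times\Delta^*\to U$ across the smooth divisor $\Delta^r\times\{0\}$. Second, for this case one uses \cite[p.~442, $(\ast)$]{Siu75}, which says precisely that if for every $z\in\Delta^r$ the restriction $f|_{\{z\}\times\Delta^*}$ extends holomorphically to $\{z\}\times\Delta\to Y$, then $f$ extends meromorphically to $\Delta^{r+1}\dashrightarrow Y$. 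The hypothesis of $(\ast)$ is supplied by the Picard hyperbolicity of $U$ from \cref{main}. That is the whole proof; no fiber-dimension estimates, no Bishop, no Ivashkovich, and no appeal to the Hodge metric are needed at this stage.

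Your Step~2 as written is problematic because you never actually establish the volume bound that Bishop's theorem would require, and you yourself flag this as the ``main obstacle.'' There is no obvious way to extract such a bound from the Finsler metric of \cref{thmx:uniform} without essentially reproving Siu's theorem. The cleaner route is simply to quote Siu directly. Your final paragraph on the second half of the corollary (blow up $X\setminus W$ to simple normal crossings and apply the local result on polydisk charts) is correct and matches the paper.
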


\subsection{Hyperbolicity for the compactification after a finite \'etale cover}
The second main result of this paper is on the hyperbolicity for the compactification of some  finite \'etale cover of the quasi-compact K\"ahler manifold $U$ in \cref{main}. Let us first introduce several definitions of hyperbolicity. We refer the readers to the recent survey by Javanpeykar \cite[Section 8]{Jav20} for more   conjectural relations  among them.  

\begin{dfn}[Notions of hyperbolicity]\label{def:notion}
Let $(X,\omega)$ be a compact K\"ahler manifold, and let $Z\subsetneq X$ be a   closed subset of $X$. 
\begin{thmlist}\label{notions}
	\item  The manifold $X$ is   \emph{Kobayashi hyperbolic modulo $Z$} if the  {Kobayashi pseudo distance} satisfies $d_{X}(x,y)>0$ for any distinct points $x,y\in X$ not both contained in $Z$.  
	\item   The manifold $X$ is   \emph{Picard hyperbolic modulo $Z$} if   any holomorphic map $\gamma\colon \Delta^*\to X$ whose image is not contained in $Z$ extends across the origin.   
	\item The manifold $X$ is   \emph{Brody hyperbolic modulo $Z$} if  any entire curve  $\gamma\colon\bC\to X$ is  contained in $Z$.   
		\item  \label{def:algebraic} The manifold $X$ is   \emph{algebraically hyperbolic modulo $Z$} if there is an $\ep>0$ so that for any compact irreducible   curve $C\subset X$ not contained in $Z$, one has
		$$
		2g(\tilde{C})-2\geq \ep \deg_{\omega}C,
		$$
		where $g(\tilde{C})$ is the genus of the  normalization $\tilde{C}$ of $C$, and $\deg_{\omega}C:=\int_{C}\omega$.
\end{thmlist}  
\end{dfn}

Note that \cref{def:algebraic} was first introduced in \cite{JX22}. 
It is easy to show that if $X$ is   Kobayashi hyperbolic  modulo  $Z$ (resp.\ Picard hyperbolic  modulo  $Z$), then $X$ is Brody hyperbolic modulo $Z$.

The second main result  of the paper is the following theorem.

\begin{thmx}\label{thmx:refined}
Let $U$ be a quasi-compact K\"ahler manifold.   Assume that there is a  $\bC$-PVHS  over $U$  so that  each fiber of the period map   is zero-dimensional.     Then there are a   quasi-projective manifold $\tilde{U}$ and a finite \'etale cover $\tilde{U}\to U$   such that for any projective compactification $X$ of\, $\tilde{U}$,
	\begin{thmenum}
		\item   \label{general type2}  an irreducible Zariski closed  subvariety of $X$  not contained in $\tilde{D}:=X-\tilde{U}$   is  of  general type;  
		\item  \label{Picard} the variety $X$  is   Picard hyperbolic modulo $\tilde{D}$;
		\item  \label{Brody} the variety $X$  is  Brody hyperbolic modulo $\tilde{D}$;
		\item  \label{Algebraic} the variety $X$  is  algebraically hyperbolic modulo $\tilde{D}$.
	\end{thmenum}
\end{thmx}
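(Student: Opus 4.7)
The plan is to deduce all four conclusions from a single geometric input: a generically nondegenerate singular Griffiths--Finsler pseudometric $F$, built from the Hodge bundles of the $\bC$-PVHS, whose holomorphic sectional curvature is bounded above by a negative constant and which, after passing to a suitable finite unramified cover, extends with only logarithmic growth to any log-smooth compactification. Such a metric is already used in the proof of \cref{main} over $\tilde U$; the new content of \cref{thmx:refined} is that its singular behaviour along the boundary can be made tame enough to transfer hyperbolicity to the compactification.

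First I would construct the cover $\tilde U\to U$: fix a smooth K\"ahler compactification $Y$ of $U$ and pass to a finite unramified cover on which the asymptotic behaviour of the pulled-back $\bC$-PVHS along each component of the boundary is tamed (ideally, the local monodromies become unipotent). For $\bZ$-PVHS this is automatic after a finite cover by Borel's theorem, but for general $\bC$-PVHS the local monodromies need not even be quasi-unipotent, so one has to combine Kawamata-type cyclic covers with the finite-index subgroup structure inside the monodromy group that preserves the Hodge flag. The quasi-projectivity of $\tilde U$ follows from the quasi-finiteness of the period map together with the relative ampleness of the Griffiths line bundle $\lg$, in the spirit of Borel--Baily. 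Each of \cref{general type2}, \cref{Picard}, \cref{Brody} and \cref{Algebraic} is invariant under proper bimeromorphic modifications of $X$ fixing $\tilde U$, so by Hironaka's theorem we may assume $(X,\tilde D)$ is log smooth.

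The main technical obstacle is then to promote the Finsler pseudometric $F$ on $T_{\tilde U}$ used for \cref{main} to a singular Finsler pseudometric on $T_X(-\log\tilde D)$ with controlled growth along $\tilde D$. Using the tameness arranged in the first step, Mochizuki's asymptotic analysis of tame harmonic bundles (playing the role of Schmid's nilpotent orbit theorem in the $\bC$-PVHS setting) shows that the Hodge norms of the iterated Kodaira--Spencer maps extend with at worst logarithmic poles along $\tilde D$, whence $F$ extends with logarithmic singularities to $X$. Griffiths transversality and the hypothesis that the fibres of the period map are zero-dimensional together guarantee that the extension of $F$ is generically nondegenerate on every irreducible subvariety $V\subset X$ with $V\not\subset\tilde D$, while Griffiths' curvature formula still furnishes the uniform negative upper bound on the holomorphic sectional curvature. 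This boundary analysis, which in the $\bZ$-PVHS case of \cite{BBT18} is sidestepped by o-minimal geometry, is the step where the bulk of the technical work will lie.

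Given such an $F$, the four conclusions follow by now-standard curvature arguments. \cref{Algebraic} follows by applying the logarithmic Ahlfors--Schwarz lemma to the normalization of any irreducible curve $C\subset X$ with $C\not\subset\tilde D$ and integrating the resulting curvature inequality. \cref{Picard} combines the logarithmic Schwarz lemma with a Noguchi-type extension criterion for holomorphic maps $\Delta^*\to X$ whose image is not contained in $\tilde D$, and \cref{Brody} is an immediate consequence. For \cref{general type2}, taking a log resolution $(V',D')\to(V,V\cap\tilde D)$ of an irreducible $V\not\subset\tilde D$ and pulling $F$ back yields a generically nondegenerate, negatively curved Finsler pseudometric on $T_{V'}(-\log D')$; by the bigness criterion of Campana--P\u{a}un, refined in the Finsler setting by Cadorel and Brunebarbe, this forces $K_{V'}(\log D')$ to be big, so $V$ is of log general type and, since $V\not\subset\tilde D$, of general type in the usual sense.
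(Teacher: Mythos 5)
Your plan diverges from the paper at the decisive step, and the divergence opens genuine gaps. You propose to choose the cover $\tilde U\to U$ so that the pulled-back local monodromies become (quasi-)unipotent, invoking a mixture of Kawamata cyclic covers and finite-index subgroups preserving the Hodge flag. For a general $\bC$-PVHS this is simply impossible: a local monodromy operator with an eigenvalue that is not a root of unity has \emph{no} power that is unipotent, so no finite base change can tame it in this sense, and Kawamata covers only change the ramification order, not the eigenvalue. The paper circumvents this by a quite different mechanism: using residual finiteness of the global monodromy group to choose a finite-index normal subgroup $\tilde\Gamma$ avoiding a finite bad subset, then proving via Cauchy's argument principle (see \cref{claim}) the dichotomy that over the cover $\mu:(X,\tilde D)\to(Y,D)$, each boundary component $\tilde D_j$ either has ${\rm ord}_{\tilde D_j}(\mu^*D)\geq m$ for a prescribed $m\gg 0$, or has \emph{trivial} local monodromy. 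The latter components are then dropped from the boundary entirely (the $\bC$-PVHS extends across them), and the former carry enough ramification that the big twist $\cO_X(\ell D_X)$ sits inside $G^{p_0,q_0}$.

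This dichotomy is not a technical convenience; it is load-bearing for the remaining three statements, and your proposal does not recover its payoff. You aim to extend the Finsler metric $F$ with logarithmic poles to $T_X(-\log\tilde D)$. But all four conclusions in \cref{thmx:refined} are about the \emph{compact} manifold $X$ modulo $\tilde D$, not about the log pair $(X,\tilde D)$. Brody and Picard hyperbolicity modulo $\tilde D$ must control holomorphic discs and entire curves that cross $\tilde D$ without being contained in it; a metric living only on $T_X(-\log\tilde D)$ sees nothing along $\tilde D$. Likewise, your argument for \cref{general type2} produces bigness of $K_{V'}(\log D')$, i.e.\ \emph{log} general type, and then asserts this gives general type ``in the usual sense''; that implication is false (e.g.\ $(\bP^2, D)$ with $D$ a high-degree smooth curve is of log general type). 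Finally, the algebraic hyperbolicity statement in \cref{thmx:refined} is $2g(\tilde C)-2\geq\ep\deg_\omega C$ with \emph{no} correction term $i(C,\tilde D)$; a log Finsler metric would only yield the weaker inequality with that term. The paper's construction specifically forces the inclusion $\tilde L\otimes\cO_X(\ell D_X)\subset G^{p_0,q_0}$ precisely so that the iterated Higgs field lands in $\mathrm{Sym}^\beta\Omega_X^1$ (without log poles) and the induced Finsler metric is defined and negatively curved on $T_X$ itself. Without this, Campana--P\u{a}un yields only log general type and the Schwarz-type arguments only log hyperbolicity. So the central missing idea is the replacement of the (unavailable) ``unipotency after finite cover'' by ``triviality or high ramification after finite cover,'' and the resulting transfer of positivity from $\Omega_X^1(\log\tilde D)$ to $\Omega_X^1$.
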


By the work of Deligne (see \cite[Theorem 7.10]{Mil13}),  the quotient   of any  bounded symmetric domain   by a torsion-free lattice  always admits a  $\bC$-PVHS whose period map is \emph{immersive everywhere}. \cref{thmx:refined} then yields the following. 

\begin{corx}\label{Picard symmetric}
	Let $U$   be the quotient of   a  bounded symmetric domain  by  a torsion-free   lattice. Then    there is a   finite \'etale cover $\tilde{U}\to U$ from a quasi-projective manifold $\tilde{U}$ with  any projective compactification $X$ of\, $\tilde{U}$    Picard and algebraically hyperbolic modulo $X-\tilde{U}$. 
\end{corx}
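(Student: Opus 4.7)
The plan is to deduce \Cref{Picard symmetric} directly from \Cref{thmx:refined} applied to $U$. Two hypotheses need to be checked: (a) $U$ is a quasi-compact K\"ahler manifold, and (b) $U$ supports a $\bC$-PVHS whose period map has zero-dimensional fibers.

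For (b), I would invoke Deligne's construction recalled in the paragraph immediately preceding the statement: on any quotient of a bounded symmetric domain by a torsion free lattice there is a natural $\bC$-PVHS whose lifted period map $U_{\rm uni}\to\mathcal{D}$ is an immersion at every point (this uses the fact that every irreducible Hermitian symmetric space of noncompact type appears, via the Harish-Chandra embedding, as an open horizontal $G_\bR$-orbit inside an ambient period domain). Since an immersion has zero-dimensional fibers, hypothesis (b) holds.

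For (a), when the lattice is arithmetic the Baily--Borel--Mumford theorem provides a smooth projective compactification of $U$, so $U$ is quasi-projective. By Margulis arithmeticity, any non-arithmetic torsion free lattice appearing here must be an irreducible lattice of real rank one, i.e.\ a lattice in $\mathrm{PU}(n,1)$: in the cocompact subcase $U$ is already a compact K\"ahler manifold, and in the non-cocompact subcase the work of Mok--Yau and Mumford-style toroidal constructions still supply a smooth K\"ahler compactification. In every case $U$ is quasi-compact K\"ahler.

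Once (a) and (b) are in hand, a direct application of \Cref{thmx:refined} yields a quasi-projective manifold $\tilde U$ and a finite unramified cover $\tilde U\to U$ such that every projective compactification $X$ of $\tilde U$ satisfies \Cref{Picard} and \Cref{Algebraic}, which is exactly to say that $X$ is Picard and algebraically hyperbolic modulo the boundary $X-\tilde U$. This is the content of \Cref{Picard symmetric}. The proposal has no real obstacle beyond \Cref{thmx:refined} itself; the only small point worth verifying carefully is the \emph{pointwise} (not merely generic) immersivity of Deligne's period map, so that hypothesis (b) holds on all of $U$ and no shrinking is required.
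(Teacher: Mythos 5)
Your proof follows essentially the same route as the paper: one checks that $U$ is quasi-compact K\"ahler (the paper cites Baily--Borel and Mok for quasi-projectivity, while you expand this via Margulis arithmeticity and the rank-one cases) and that it carries a $\bC$-PVHS with everywhere-immersive period map by Deligne's construction, and then applies \Cref{thmx:refined}. You also correctly invoke both \Cref{Picard} and \Cref{Algebraic} to get the full conclusion, whereas the paper's one-line proof only references \Cref{Picard}.
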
 

Let us stress here that   Nadel \cite{Nad89} and Rousseau \cite{Rou16} proved that the variety $X$ in \cref{Picard symmetric} is Brody and Kobayashi hyperbolic modulo $X-\tilde{U}$, and Brunebarbe \cite{Bru16} and Cadorel \cite{Cad16,Cad18} proved that any Zariski closed subvariety not contained in $X-\tilde{U}$ is of general type.  \cref{thmx:refined} thus incorporates their results, but at the cost of loss of effectivity for the level structures (see \cref{rem:compare})  due to the generality of our result in \cref{thmx:refined}.

\subsection{Main strategy}

\subsubsection{Negatively curved Finsler metric}\label{sec:describe}
 
Let $Y$ be  a compact K\"ahler manifold, and let $D$ be a simple normal crossing divisor on $Y$.  Assume that there is a $\bC$-PVHS   over $U:=Y-D$.   In \cite{Gri70},  Griffiths constructed a metrized   line bundle on $U$ whose curvature is semipositive and strictly positive at the points where the period map is  immersive.  Based on the work by Simpson and Mochizuki, in \cref{prop:bigness}, we can extend this Griffiths line bundle over $Y$ to obtain  a more refined positivity result.\footnote{If the local monodromy around $D$ is unipotent, this is well known.}
We then construct a special  system of log Hodge bundles $(E,\theta)=(\oplus_{p+q=m} E^{p,q}, \oplus_{p+q=m}\theta_{p,q})$ on the log pair $(Y,D)$ so that  some higher-stage $E^{p_0,q_0}$ contains a big line bundle which admits enough \emph{local positivity along $D$}.   Inspired by our previous work \cite{Den18} on the proof of   Viehweg--Zuo's conjecture on  Brody hyperbolicity of moduli of polarized manifolds, in \cref{Deng} we show  that $(E,\theta)$ still enjoys a  ``{partially}'' infinitesimal Torelli property.  These results enable us to construct a negatively curved and generically positive definite Finsler metric on $T_Y(-\log D)$  in a similar vein as \cite{Den18}.

\begin{thm}[ = \cref{prop:new} + \cref{thm:uniform}]\label{thmx:uniform}
	Let $ Y$ be a compact K\"ahler manifold,   and let $D$ be a simple normal crossing divisor on $Y$.  Assume that there is a     $\bC$-PVHS  over $U:=Y-D$ whose period map is   immersive   at one point.  Then there are a \emph{Finsler metric} $h$ $($see \cref{def:Finsler}\,$)$ on $T_Y(-\log D)$ which is positive definite on a dense Zariski open subset $U^\circ$\, of $Y- D$ and a smooth K\"ahler form $\omega$ on $Y$ such that   for any holomorphic map $\gamma\colon C\to U$  from an open set $C\subset \bC$ to $U$, one has
	\begin{align} \label{eq:information}
	 \hess\log |\gamma'(t)|_{h}^2\geq \gamma^*\omega
	\end{align}   
	when $\gamma(C)\cap U^\circ\neq\varnothing$.
\end{thm}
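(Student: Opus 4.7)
The plan is to construct the Finsler metric $h$ explicitly as a weighted sum of pseudo-norms coming from iterated contractions of the Higgs field against the big line subbundle provided by \cref{prop:bigness}, and then to establish \eqref{eq:information} by the curvature-telescoping argument developed in \cite{Den18}.

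First I would set up the data. By \cref{prop:bigness} and the preceding discussion, the log pair $(Y,D)$ carries a system of log Hodge bundles $(E,\theta)=(\bigoplus_{p+q=m}E^{p,q},\theta)$ together with a line subbundle $L\subset E^{p_0,q_0}$ equipped with a singular Hermitian metric $h_L$ whose curvature current dominates a smooth K\"ahler form $\omega$ on $Y$. For a local tangent vector $v\in T_Y(-\log D)$, contraction produces $\theta_v:E^{p,q}\to E^{p-1,q+1}$ and its iterates $\theta_v^k:E^{p_0,q_0}\to E^{p_0-k,q_0+k}$ for $1\leq k\leq p_0$. The partial infinitesimal Torelli property of \cref{Deng} guarantees that, on a dense Zariski open subset $U^\circ\subset U$, for every nonzero $v\in T_Y(-\log D)$ some iterate $\theta_v^k(s)$ is nonzero, where $s$ is a local generator of $L$.

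Given these data, I would set
$$
F_k(v):=\frac{h_{E^{p_0-k,q_0+k}}(\theta_v^k(s),\theta_v^k(s))}{h_L(s,s)},
$$
which is independent of the choice of $s$ and hence globally well defined on $T_Y(-\log D)$. Each $F_k^{1/k}$ is homogeneous of degree two in $v$, and the candidate Finsler metric is
$$
h:=\sum_{k=1}^{p_0}\alpha_k\, F_k^{1/k}
$$
for positive constants $\alpha_k$ to be chosen. By the nondegeneracy above, $h$ is positive definite exactly on $U^\circ$. The curvature inequality \eqref{eq:information} is then obtained pointwise along $C$: Griffiths' Hodge-theoretic curvature identities for the subquotients $E^{p_0-k,q_0+k}$ yield, in the sense of currents, estimates of the form
$$
\hess\log F_k(\gamma'(t))\;\geq\; \frac{F_{k+1}(\gamma'(t))}{F_k(\gamma'(t))}-\frac{F_k(\gamma'(t))}{F_{k-1}(\gamma'(t))}+\gamma^*\hess\log h_L(s,s)^{-1}
$$
wherever $F_k(\gamma'(t))\neq 0$. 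Choosing the $\alpha_k$ so that the cross quotients cancel by telescoping, and invoking $\hess\log h_L(s,s)^{-1}\geq \omega$ from \cref{prop:bigness}, one arrives at \eqref{eq:information}.

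The main obstacle I expect is the boundary analysis along $D$. Because no unipotency hypothesis is imposed on the local monodromies at infinity, the natural Hodge metrics on $E^{p,q}$ carry the intricate singularities described by Mochizuki's asymptotic theory of tame and wild harmonic bundles, and one must verify both that the pseudo-norms $F_k$ extend with the correct (upper semi-continuous) behaviour to $T_Y(-\log D)$ along $D$, and that the curvature inequality continues to hold as currents across $D$. The refined local positivity of $h_L$ along $D$ produced by \cref{prop:bigness} is tailored precisely to dominate the negative contributions coming from the singular Hodge metrics, which is what ultimately makes the weighted sum $h$ a bona fide Finsler metric on $T_Y(-\log D)$ and the inequality \eqref{eq:information} valid globally on $U^\circ$.
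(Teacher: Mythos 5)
Your plan follows the right skeleton (iterated Higgs field contractions, pseudo-metrics $F_k$, weighted combination, curvature estimate), which is exactly the architecture of \cref{thm:uniform}. But the step you call ``telescoping'' conceals a genuine gap, and there is also a sign error in your curvature inequality. First the sign: with $G_k:=|\tau_k(\partial_t^{\otimes k})|_h^{2/k}$ the paper's \cref{thm:curvature} gives
\[
\hess\log G_k\ \geq\ \tfrac{1}{k}\Big(\Theta_{L,h_L}(\partial_t,\bar\partial_t)+\tfrac{G_k^{k}}{G_{k-1}^{k-1}}-\tfrac{G_{k+1}^{k+1}}{G_k^{k}}\Big),
\]
i.e.\ the $\theta^*\theta$ piece of the Hodge curvature contributes the \emph{positive} term $G_k^k/G_{k-1}^{k-1}$ and the $\theta\theta^*$ piece the \emph{negative} term $-G_{k+1}^{k+1}/G_k^k$; your displayed inequality has these reversed. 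More importantly, even with the correct signs the cross quotients do \emph{not} cancel by telescoping in the convex combination $\hess\log(\sum_k k\alpha_k G_k)\geq \tfrac{1}{H}\sum_k k\alpha_k G_k\,\hess\log G_k$: the residual sum $\sum_j\big(\alpha_j G_j^{j+1}/G_{j-1}^{j-1}-\alpha_{j-1}G_j^{j}/G_{j-1}^{j-2}\big)$ is not a telescoping series (consecutive terms have incompatible powers) and can in fact be negative. The paper handles it with Schumacher's inequality (\cite[Lemma 17]{Sch17}, quoted as \eqref{final}), which bounds this residual below by $-\tfrac{1}{2}\tfrac{\alpha_1^3}{\alpha_2^2}G_1^2$ plus terms controllable by the $\alpha_j$. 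That leftover negative multiple of $G_1^2$ is then absorbed by \cref{claim2}: the Poincar\'e-type comparison of \cref{bounded} forces $\Theta_{L,h_L}(\partial_t,\bar\partial_t)\geq c_0 G_1(t)$ for a \emph{uniform} $c_0>0$, and it is only this bound that makes the choice of weights $\alpha_k$ in \eqref{eq:alpha} possible uniformly in $\gamma$. Your proposal invokes only $\sn\Theta_{h_L}(L)\geq\omega$ and never establishes the comparison with $G_1$, so the negative residual is unaccounted for and the constants cannot be chosen.

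Two smaller points. Your quantity $h=\sum_k\alpha_k F_k^{1/k}$ is degree-2 homogeneous, so it is the square of a Finsler metric in the sense of \cref{def:Finsler}; the paper writes $F=\sqrt{\sum_k k\alpha_k F_k^2}$. And the inclusion $L\subset E^{p_0,q_0}$ is not a direct consequence of \cref{prop:bigness}: the Griffiths line bundle of the original system does not sit inside any Hodge graded piece, and one must pass to the $r$-fold tensor power $E=F^{\otimes r}$ as in the construction leading to \cref{prop:new}. Since the statement is labelled ``$=$\cref{prop:new}$+$\cref{thm:uniform}'' you may take \cref{prop:new} for granted, but then you should cite it rather than \cref{prop:bigness}. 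Finally, the positivity of $h$ on a Zariski open set needs both the generic injectivity of $\tau_1$ from \cref{Deng} \emph{and} that the singular metric of \cref{bounded} is smooth and non-degenerate away from $D\cup\mathbf{B}_+(L-D)$; worth saying explicitly.
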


Let us mention that, though we only construct the (possibly degenerate) Finsler metric over $T_Y(-\log D)$, it follows from \eqref{eq:information} that we know exactly the behavior of its curvature near the boundary $D$ since $\omega$ is a smooth K\"ahler form over $Y$. The proof of \cref{main} is then based on \cref{thmx:uniform} and some criteria for the big Picard theorem established  in \cite{DLSZ} (see \cref{thm:criteria}).  
Let us also mention that the Finsler metric constructed in \cref{thmx:uniform} is also crucially used in the proof of \cref{thmx:refined}.

\subsubsection{On the hyperbolicity of the compactification}
The proof of \cref{thmx:refined} is based on \cref{strong}, whose proof is technically involved. It is worthwhile to mention that our proof is quite different from those in \cite{Nad89,Rou16,Bru16,Cad18}. All these proofs relied heavily on the special property of quotients of bounded symmetric domains by torsion-free lattices. They all applied  the toroidal compactifications by Mumford to find the  desired finite \'etale cover $\tilde{U}\to U$ when $U$ is a quotient of a bounded symmetric domain  by a torsion-free lattice. We construct the cover $\tilde{U}\to U$ in \cref{thmx:refined} in a subtle way using the residual  finiteness of the global monodromy group.   We refer the readers to the beginning of \Cref{sec:strong} for the general strategy.

\subsection{Some new developments}
Shortly after this paper appeared on arXiv,   Brunebarbe--Brotbek \cite[Theorem 1.5]{BB20} proved the Borel hyperbolicity of $U$ in \cref{main} under the additional assumption that the local monodromy of the $\bC$-PVHS at infinity is unipotent.  Moreover, they also obtained a weaker  result than \cref{Picard}  in \cite[Theorem~1.7]{BB20}, in which they showed that for a quasi-projective manifold $U$ admitting a  $\bZ$-PVHS with quasi-finite period map,  there is a finite \'etale cover $\tilde{U}\to U$ so that the projective compactification $X$ of $\tilde{U}$ is \emph{Borel hyperbolic} modulo $X-\tilde{U}$.    Our proofs are indeed quite different: Brotbek--Brunebarbe's proof is based on their Second Main Theorem using the \emph{Griffiths--Schmid metric}, which  coincides with the curvature form of the Griffiths line bundle.  
 Let us also mention that result similar to  \cref{general type2} is also obtained by Brunebarbe in \cite[Theorem 1.1]{Bru20} when the underlying local system of the $\bC$-PVHS is defined over $\bZ$.  
 
In  \cite{BD21}   Cadorel and the author generalized \cref{main,thmx:refined,Picard symmetric} in this paper to quasi-compact K\"ahler manifolds admitting \emph{nilpotent Higgs bundles}.  More recently,  in \cite[Theorem 0.1]{CDY22} Cadorel, Yamanoi and the author proved that for any complex quasi-projective  normal variety  $X$, if there is a big representation $\varrho\colon \pi_1(X)\to GL_N(\bC)$ such that the Zariski closure of $\varrho(\pi_1(X))$ is a semisimple algebraic group, then there is a proper Zariski closed subset $Z\subsetneq X$ such that 
  \begin{itemize}
  	\item  any closed subvariety of $X$ not contained in $X$ is log general type;
  	\item  $X$ is Picard hyperbolic modulo $Z$. 
  \end{itemize}
We stress here that  \cref{main} in this paper is applied in \cite{CDY22}.   

\subsection*{Acknowledgements}
 I would like to thank   Junyan Cao, Jiaming Chen, Jean-Pierre Demailly, Philippe Eyssidieux,   Bruno Klingler, Steven Lu, Mihai P\u{a}un, Erwan Rousseau, Emmanuel Ullmo  and Kang Zuo  for their interest and discussions. I especially thank    Ariyan Javanpeykar for    various comments and thank Gregorio Baldi and Beno\^it Cadorel for   fruitful discussions during the revision of this paper.   Last but not least, I am grateful to the referees for their  careful reading and very helpful comments to improve this manuscript.
 
 \section*{Notation and Conventions}
 \begin{itemize}
 	\item 	A \emph{log pair} $(Y,D)$ consists of a (possibly non-compact) complex manifold and a simple normal crossing divisor $D$. It will be called a \emph{compact K\"ahler log pair} (resp.\ \emph{projective log pair}) if $Y$ is a compact K\"ahler (resp.\ projective) manifold. 
 	\item A complex manifold is called \emph{quasi-compact K\"ahler} if it is a Zariski open subset of a compact K\"ahler manifold.
 	\item A \emph{log morphism} $f\colon(X,\tilde{D})\to (Y,D)$ between log pairs is a morphism $f\colon X\to Y$ with $\tilde{D}\subset f^{-1}(D)$.
 	\item For a big line bundle $L$ on a projective manifold, $\mathbf{B}_+(L)$ denotes its \emph{augmented base locus}  (see \cite[Definition 10.3.2]{Laz04II}).
 \end{itemize}
 
\section{Preliminaries on Hodge theory} 

\subsection{Systems of Hodge bundles} \label{sec:VHS} 
Following Simpson \cite{Sim88}, a complex polarized variation of Hodge structures ($\bC$-PVHS) is equivalent to a system of Hodge bundles. Let us recall the definition in this subsection.

\begin{dfn}[Higgs bundle]
 		A \emph{Higgs bundle} on a complex manifold $Y$ is a pair $(E,\theta)$ consisting of a holomorphic vector bundle $E$ on $Y$ and an $\cO_Y$-linear map
 		$$
 		\theta\colon E\lra E\otimes \Omega_Y^1
 		$$
 		so that $\theta\wedge\theta=0$. Such a map $\theta$ is called the \emph{Higgs field}.
\end{dfn}
        
\begin{dfn}[Harmonic bundle]
 	A \emph{harmonic bundle}  $(E,\theta,h)$ consists of a  Higgs bundle $(E,\theta)$ and  a hermitian metric $h$ for $E$ such that
 	$$
 	D:=\d_h+\db_E+\theta+\theta_h^*
 	$$
 	is flat. Here $\d_h+\db_E$ is the Chern connection, and $\theta_h^*\in C^{\infty}(Y,\End(E)\otimes \Omega^{0,1}_Y)$ is the adjoint of $\theta$ with respect to $h$.
 \end{dfn}
 
\begin{dfn}[System of Hodge bundles]\label{def:Hodge}
A \emph{system of Hodge bundles} of weight $m$ is a harmonic bundle   $(E,\theta,h)$ satisfying the following: \begin{itemize}
	\item The vector bundle $E=\oplus_{p+q=m}E^{p,q}$ is a direct sum of holomorphic vector bundles $E^{p,q}$.
	\item The map $\theta$ restricts to  $$
	\theta|_{E^{p,q}}\colon E^{p,q}\lra E^{p-1,q+1}\otimes \Omega_Y^1. 
	$$
	\item The splitting $E=\oplus_{p+q=m}E^{p,q}$  is orthogonal with respect to $h$.
\end{itemize} 
 We write $h_{p,q}=h|_{E^{p,q}}$ and $\theta_{p,q}=\theta|_{E^{p,q}}$. This harmonic metric $h$ will be called the \emph{Hodge metric}.
 \end{dfn}
 
Throughout this paper, we observe the convention that $0\leq p,q\leq m$ for the decomposition $E=\oplus_{p+q=m}E^{p,q}$. This can always be achieved if we make a Tate twist $(k,k)$ to increase the weight by $2k$ when $k\in \bZ_{>0}$ is large enough.

\subsection{Filtered bundles and parabolic Higgs bundles}\label{sec:parahiggs}
In this section, we recall the notions of filtered bundles and parabolic Higgs bundles from \cite{Sim88,Moc07}. Let $(Y,D=\sum_{i=1}^{c}D_i)$ be a log pair.

\begin{dfn}\label{def:filter}
	A filtered bundle $(E,\pae)$ on $(Y, D)$ is a locally free
sheaf $E$ on $U:=Y-D$, together with an $\mathbb{R}^c$-indexed
	filtration $\pae$  by locally free sheaves on $Y$ such that
	\begin{thmlist}
		\item $\bm{a}\in \mathbb{R}^c$ and $\pae|_U=E$; 
\item	$\pae\subset \pbe$ for $\fa\leq \fb$ (\textit{i.e.}, $a_i\leq b_i$ for all $i$);
	\item $\pae\otimes \cO_Y(D_i)=\mathcal{P}_{\! \fa+1_i}E$ with $1_i =(0,\ldots, 1, \ldots, 0)$ with $1$ in the \supth{i} component;
		\item $\mathcal{P}_{\!\bm{a}+\bm{\epsilon}}E = \pae$ for any vector $\bm{\epsilon}=(\epsilon, \ldots, \epsilon)$ with $0<\epsilon\ll 1$; 
		\item  write $\mathcal{P}_{\!<\bm{a}}E=\cup_{\bm{b}<\bm{a}}\pbe$;   the set of {\em weights} $\bm{a}$ such that $\pae/\mathcal{P}_{\!<\bm{a}}E\not= 0$ is  discrete in $\mathbb{R}^c$.
	\end{thmlist}
\end{dfn}

A weight is normalized if it lies in $(-1,0]^c$. Denote $\mathcal{P}_{\!\bm{0}}E$ by $\diae$, where $\bm{0}=(0, \ldots, 0)$.  Note that the set of weights of $(E,\pae)$ is uniquely determined by the weights lying in $(-1,0]^c$.  

\begin{dfn}
	A {\em parabolic Higgs bundle} on $(Y,D)$ is a filtered 
	bundle $(E,\pae)$ together with $\cO_Y$ linear map
	$$\theta\colon \diae\lra \Omega_Y^1(\log D)\otimes \diae$$
	such that
	$$\theta\wedge \theta=0\quad\text{and}\quad
        \theta(\pae)\subseteq \Omega_Y^1(\log D)\otimes \pae\;\;\text{for }
        \bm{a}\in [-1, 0)^c.$$
\end{dfn}

A natural class of filtered bundles comes from extensions of   systems of Hodge bundles, which will be discussed in  \Cref{sec:prolong}.

 \subsection{Admissible coordinates}

 \begin{dfn}[Admissible coordinate]\label{def:admissible} 
 	Let $(Y,D=\sum_{i=1}^{c}D_i)$ be log pair. Let $p$ be a point of $Y$, and let $\{D_{j}\}_{ j=1,\ldots,\ell}$ 
 	be the components of $D$ containing $p$. An \emph{admissible coordinate} around $p$
 	is a tuple $(\cU;z_1,\ldots,z_n;\varphi)$ (or simply  $(\cU;z_1,\ldots,z_n)$ if no confusion arises) where
 	\begin{itemize}
 		\item $\cU$ is an open subset of $Y$ containing $p$; 
 		\item there is a holomorphic isomorphism   $\varphi\colon \cU\to \Delta^n$ so that  $\varphi(D_j)=(z_j=0)$ for any
 		$j=1,\ldots,\ell$.
 	\end{itemize} 
 	We shall write $\cU^*:=\cU-D$.
 \end{dfn}
 
 Recall that the complete Poincar\'e metric $\omega_P$ on $(\Delta^*)^\ell\times \Delta^{n-\ell}$ is described as 
 \begin{align}\label{eq:Poincare}
 \omega_P=\sum_{j=1}^{\ell}\frac{\sqrt{-1}dz_j\wedge d\bar{z}_j}{|z_j|^2(\log |z_j|^2)^2}+\sum_{k=\ell+1}^{n} \sqrt{-1}\frac{dz_k\wedge d\bar{z}_k  }{(1-|z_k|^2)^2} 
\end{align}
 Note that 
 $
 \omega_P=\hess \varphi
 $ 
 with \begin{align}\label{eq:potential}
 	 \varphi:=- \log \left(\prod_{j=1}^{\ell}\left(-\log |z_j|^2\right)\cdot  \left(\prod_{k=\ell+1}^{n} \left(1-|z_k|\right)^2\right)\right).
 \end{align}
 
 \begin{rem}[Global K\"ahler metric with Poincar\'e growth]\label{rem:Poincare}
 	Let   $(Y,\omega)$ be a  compact K\"ahler  manifold, and let $D=\sum_{i=1}^\ell D_i$ be a simple normal crossing divisor on $Y$.  
        Let $\sigma_i$ be the section $H^0(Y,\cO_Y(D_i))$ defining $D_i$, and pick any smooth metric $h_i$ for the line bundle $\cO_Y(D_i)$.  One can prove that when $\ep>0$ is small enough, the closed $(1,1)$-current
        \begin{align}\label{eq:metric}
 	T:=\omega-\hess \log \left(-\prod_{i=1}^{\ell}\log  |\ep\cdot \sigma_i|_{h_i}^2\right)
 	\end{align} 
 is a  K\"ahler current (\textit{i.e.}, $T\geq \delta  \omega$ for some $\delta>0$), and on any admissible coordinate $(\cU; z_1,\ldots,z_n)$, 
  $
 T|_{\cU-D}$ is mutually bounded with $\omega_P
 $.
 \end{rem}
 
\subsection{Extension of systems of Hodge bundles}\label{sec:prolong}

Let $(Y,D=\sum_{i=1}^{\ell}D_i)$ be log pair.  Let $(E,h)$ be a hermitian bundle on $Y-D$.	For any $\fa=(a_1,\ldots,a_\ell)\in \mathbb{R}^\ell$, we can prolong $E$ over $Y$ by $\pahe$  as follows: 
\begin{align} \label{def:prolong}
\pahe(\cU)=\left\{\sigma\in\Gamma(\cU-D,E|_{\cU-D})\,\mid\, |\sigma|_h\lesssim\frac{1}{\prod_{i=1}^{\ell}|z_i|^{a_i+\ep}}\  \forall \ep>0\right\}, 
\end{align}
where $(\cU; z_1,\ldots,z_n)$ is any admissible coordinate. 
We still use the notation $\diae$ in the case $\bm{a}=(0,\ldots,0)$. In general, $\pahe$ is not coherent. However, by the deep work of Simpson \cite[Theorem 3]{Sim88} and Mochizuki \cite{Moc07}, this is the case for systems of Hodge bundles.
  
\begin{thm}[Simpson, Mochizuki]\label{thm:SM}
If\, $\shb$ is a system of Hodge bundles on $Y-D$, then $(E,\pahe,\theta)$ is a parabolic Higgs bundle on $(Y,D)$. 
\end{thm}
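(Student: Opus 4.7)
The plan is to reduce to a local model on a polydisk and extract the parabolic structure from the asymptotic behavior of the Hodge metric along $D$. Since the conclusion is local on $Y$, I would work in an admissible coordinate $(\cU; z_1,\ldots,z_n)$ around a point $p\in D$ with $D\cap \cU=\{z_1\cdots z_\ell=0\}$, and analyze $\pahe|_\cU$ directly from the norm-growth definition \eqref{def:prolong}.

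First I would establish tameness of the harmonic bundle $\shb$ coming from a $\bC$-PVHS. Concretely, on a punctured disk direction $z_i$ the Higgs field acquires a residue whose eigenvalues control the asymptotic of $h$; tameness amounts to these eigenvalues being bounded (indeed, for a $\bC$-PVHS the Higgs field has log poles with nilpotent residues in the integral case, and in general acquires only polynomial behavior). With tameness in hand, I would invoke the asymptotic analysis for tame harmonic bundles: in Simpson's original setting (unipotent monodromy) this is a nilpotent orbit-type theorem, and Mochizuki's generalization provides the analogous statement for arbitrary semisimple part, producing the Kashiwara--Malgrange / KMS spectrum that will furnish the weights of the filtration.

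Next I would verify the axioms of \cref{def:filter} for the sheaves $\pahe$. The crux is local freeness: using the norm estimates (\emph{mutual boundedness} between $h$ and the model metric built from the nilpotent orbit data), one constructs explicit local frames of $\pahe$ adapted to the generalized eigenspace decomposition of the monodromy, whose growth orders equal the weights. Axioms (1)--(4) of \cref{def:filter} are then essentially formal: the twist $\pahe\otimes\cO_Y(D_i)=\mathcal{P}_{\!\fa+1_i}E$ matches the shift $|z_i|\mapsto |z_i|\cdot|z_i|^{-1}$ in the growth condition, and the right-continuity in $\fa$ is built into the $\ep$ slack in \eqref{def:prolong}. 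Discreteness of weights (axiom (5)) reduces to discreteness of the KMS spectrum, which is the hard input from \cite{Moc07}. Finally, the parabolic Higgs condition $\theta(\pae)\subseteq \Omega^1_Y(\log D)\otimes \pae$ follows from Simpson's estimate $|\theta|_h\lesssim \sum |dz_i|/(|z_i|\,|\log|z_i||)$ for tame harmonic bundles, which at the level of growth orders produces at most a log pole along each $D_i$ and hence preserves every filtration step.

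The main obstacle is the mutual boundedness theorem comparing the Hodge metric $h$ to a model metric built from the asymptotic data of the PVHS; everything in the plan above is a formal consequence once such estimates are in place. In the unipotent case this boils down to Schmid's nilpotent orbit/$\mathrm{SL}_2$-orbit theorem as adapted by Simpson, while for general semisimple monodromy one needs the considerably more involved analysis of Mochizuki \cite{Moc07}, which is why I would quote these estimates as a black box rather than reprove them. Once granted, the verification of \cref{def:filter} and of the log-Higgs compatibility is routine bookkeeping with the growth orders in \eqref{def:prolong}.
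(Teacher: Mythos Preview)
Your sketch is a reasonable outline of the Simpson--Mochizuki machinery, but note that the paper does not prove this statement at all: the theorem is marked with a \qed and is quoted directly from \cite[Theorem 3]{Sim88} and \cite{Moc07} as a black box. There is nothing to compare against---the paper simply cites the result and moves on, exactly as you yourself propose to do for the hard analytic input (mutual boundedness, KMS spectrum, norm estimates). Your plan essentially summarizes what those references contain, so it is consistent with the paper's treatment; just be aware that no original argument is expected here.
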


In this case, we  write  $\pae$ for $\pahe$ to lighten the notation, and denote  by
$$
\theta\colon\pae\lra \pae\otimes \Omega_Y^1(\log D)
$$
the prolonged Higgs field by abuse of notation. From \Cref{thm:SM}, one can easily deduce the following. 

\begin{lem}\label{lem:Prolong}
Let $\shb$ be as above. 
\begin{thmlist}
	\item \label{splitting} We have $\pae=\oplus_{p+q=m}\pae^{p,q}$. Here $\pae^{p,q}$ is the extension of\, $(E^{p,q}, h_{p,q})$.
	\item The map $\theta$ restricts to 
          $$
	\theta|_{\pae^{p,q}}\colon  \pae^{p,q}\lra  \pae^{p-1,q+1}\otimes \Omega_{Y}^1(\log D).  
	$$   
	\end{thmlist} 
\end{lem}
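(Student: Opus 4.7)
The plan is to deduce both parts of \cref{lem:Prolong} directly from the orthogonality of the Hodge decomposition and from \cref{thm:SM}, without having to re-run the Simpson--Mochizuki analysis summand by summand.

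For part (i), the key observation is that on $Y-D$ the Hodge metric $h$ is by definition the orthogonal sum $h=\oplus_{p+q=m} h_{p,q}$. Hence for any local section $\sigma=\sum_{p+q=m}\sigma_{p,q}$ of $E$ over $\cU^*$, we have
$$
|\sigma|_h^{2}=\sum_{p+q=m}|\sigma_{p,q}|_{h_{p,q}}^{2},
$$
so $|\sigma_{p,q}|_{h_{p,q}}\leq |\sigma|_h$ for every $(p,q)$, while conversely $|\sigma|_h\leq (m+1)\max_{p,q}|\sigma_{p,q}|_{h_{p,q}}$. In any admissible coordinate $(\cU;z_1,\ldots,z_n)$ the growth condition in \eqref{def:prolong} is therefore equivalent to the same growth bound holding for each Hodge component. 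This gives the inclusion-and-equality of subsheaves of $j_*(E|_{Y-D})$,
$$
\pae\;=\;\bigoplus_{p+q=m}\mathcal{P}^{h_{p,q}}_{\!\bm{a}}E^{p,q},
$$
where each summand on the right is the prolongation from \eqref{def:prolong} applied to $(E^{p,q},h_{p,q})$. Coherence (indeed local freeness) of the right-hand side summands then follows from coherence of the left-hand side, which is granted by \cref{thm:SM}, and one defines $\pae^{p,q}$ to be this summand.

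For part (ii), by \cref{thm:SM} the Higgs field extends as an $\cO_Y$-linear morphism $\theta\colon\pae\to\pae\otimes\Omega_Y^{1}(\log D)$. On $Y-D$ the original Higgs field satisfies $\theta(E^{p,q})\subset E^{p-1,q+1}\otimes\Omega_Y^{1}$ by \cref{def:Hodge}. Given a local section $\sigma$ of $\pae^{p,q}$, its image $\theta(\sigma)$ is a section of $\pae\otimes\Omega_Y^{1}(\log D)$ whose restriction to $Y-D$ lies in the $(p-1,q+1)$-component. Tensoring the decomposition in (i) with $\Omega_Y^{1}(\log D)$ and projecting onto the non-$(p-1,q+1)$ components shows that those projections are zero on $Y-D$, hence zero globally since each $\pae^{p',q'}\otimes\Omega_Y^{1}(\log D)$ injects into $j_*(E^{p',q'}|_{Y-D}\otimes\Omega_Y^{1})$. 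This yields $\theta|_{\pae^{p,q}}\colon \pae^{p,q}\to\pae^{p-1,q+1}\otimes\Omega_Y^{1}(\log D)$, as asserted.

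Neither step is expected to be a genuine obstacle: the only non-formal input is the coherence/parabolicity statement from \cref{thm:SM}, and everything else is a direct consequence of the orthogonality of the Hodge decomposition and of the type of the Higgs field. The mildly delicate point to handle carefully is ensuring that the equality of sheaves (not just of sections over $Y-D$) follows from the pointwise growth estimates, for which the orthogonal decomposition of $h$ is exactly what is needed.
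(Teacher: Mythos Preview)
Your proposal is correct and is exactly the natural expansion of what the paper intends: the paper gives no proof at all beyond the sentence ``From \cref{thm:SM} one can easily deduce the following'' and a \qed, so you are simply filling in the expected details via the orthogonality of the Hodge metric and the type constraint on $\theta$. The only cosmetic point is that the bound should read $|\sigma|_h\leq \sqrt{m+1}\max_{p,q}|\sigma_{p,q}|_{h_{p,q}}$, but this is irrelevant for the growth condition.
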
 

\begin{rem}
	If $\shb$ is  a system of Hodge bundles, $\pae$ coincides with the Deligne extension with real part of the eigenvalue in $[\bm{a},\bm{a}+\bm{1})$.   See the table in \cite[p.~746]{Sim90}.
\end{rem}

\begin{dfn}\label{def:canonical}
Let $(Y,D)$ be a log pair.	Let $\shb$ be a system of Hodge bundles defined over $Y-D$. The extension $(\diae=\oplus_{p+q=m}\diae^{\p,q}, \theta)$ is called the \emph{canonical extension} of $\shb$.
\end{dfn}

\Cref{lem:Prolong} inspires us to introduce the definition of systems of log Hodge bundles.

\begin{dfn}[System of log Hodge bundles]\label{def:log Hodge}
  Let $(Y,D)$ be a log pair.	A \emph{system of log Hodge bundles} of weight $m$ over $(Y,D)$ consists of a pair $(E=\oplus_{p+q=m}E^{p,q},\theta=\oplus_{p+q=m}\theta_{p,q})$, where
  \begin{itemize}
 		\item $E=\oplus_{p+q=m}E^{p,q}$ is a direct sum of holomorphic vector bundles $E^{p,q}$ on $Y$;
 		\item $\theta$ is a direct sum of  $$
 		\theta_{p,q}\colon E^{p,q}\lra E^{p-1,q+1}\otimes \Omega^1_Y(\log D)
 		$$
 		with $\theta\wedge\theta=0$. 
 	\end{itemize}  
 \end{dfn} 

\section{Construction of a special system of log Hodge bundles}
In this section, we first study the refined positivity for the Griffiths line bundle associated to a system of Hodge bundles. This positivity is well known when the corresponding $\bC$-PVHS has unipotent monodromies near the boundary. We then construct a special system of log Hodge bundles (see \cref{prop:new}) over the log pair $(Y,D)$ in   \cref{thmx:uniform}. Such a system of Hodge bundles will be used to construct a negatively curved Finsler metric in \Cref{sec:construction}.

\subsection{Refined positivity for Griffiths line bundles} 
For a system of Hodge bundles $\shb$ over a complex manifold $U$, in \cite{Gri70}  Griffiths constructed a line bundle $\kL$ on $U$, which can be endowed with a natural metric with semipositive curvature. Precisely, one has  
\begin{align}\label{eq:Griffiths}
\kL:= \left(\det E^{m,0}\right)^{\otimes m}\otimes\left(\det E^{m-1,1}\right)^{\otimes(m-1)}\otimes\cdots\otimes  \det E^{1,m-1}.  
\end{align}
Here $\theta_{p,q}^*$ is the adjoint of $\theta_{p,q}$ with respect to $h_{p,q}$. 
 The Hodge metric $h$ then induces a metric $h_{\kL}$ on $\kL$  whose curvature is  
\begin{align}\label{eq:Gc}
\sn \Theta_{h_{\kL}}(\kL)=-{\tr}\left(\sum_{q=0}^{m-1}\theta_{m-q,q}^*\wedge \theta_{m-q,q}\right)\geq 0.
\end{align}
One can  see that $\sn \Theta_{h_{\kL}}(\kL)>0$ at the point $y$ where $\theta\colon T_{Y,y}\to \End(E_y)$ is injective. Note that $\theta$ is the differential of the period map  (see, \textit{e.g.}, \cite[p. 429]{KKM11} for the proof). This means that $\sn \Theta_{h_{\kL}}(\kL)$ is strictly positive at the point where the period map is immersive.
 
  Now assume  $U=Y-D$, where $(Y,D)$ is a compact K\"ahler log pair. Let $T$ be the K\"ahler current on $Y$ defined in \cref{rem:Poincare}. Then $\omega_U:=T|_{U}$ is a complete K\"ahler metric with Poincar\'e type near $D$. We recall the following theorem by Simpson \cite[Lemma 10.1]{Sim88} and Mochizuki \cite{Moc07}.

  \begin{thm}[Simpson, Mochizuki]\label{thm:norm}
  	Let $\shb$ be a system of Hodge bundles on $U=Y-D$. Then 
  \begin{align}
  |\theta|_{h,\omega_U}\leq C
  \end{align}
  for some constant $C>0$. 
  \end{thm}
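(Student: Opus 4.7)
The plan is to reduce to a local statement near the boundary and then invoke the norm estimates from the theory of tame harmonic bundles.

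First I would observe that the claim is entirely local. Cover $Y$ by finitely many admissible coordinate charts. On any chart $(\mathcal{U};z_1,\dots,z_n)$ whose closure is relatively compact in $Y\setminus D$ both $h$ and $\omega_U$ are smooth, so $|\theta|_{h,\omega_U}$ is bounded there by continuity and compactness of $Y$. Hence only the asymptotics near the boundary matter, and I would work in a chart where $D\cap\mathcal{U}=\{z_1\cdots z_\ell=0\}$.

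Next I would use the prolongation result \cref{thm:SM}: the Higgs field extends to a log Higgs field $\theta\colon \diae\to \diae\otimes \Omega_Y^1(\log D)$, so on such a chart one may write
$$
\theta=\sum_{i=1}^\ell A_i\,\frac{dz_i}{z_i}+\sum_{k=\ell+1}^n B_k\,dz_k,
$$
with $A_i,B_k$ holomorphic endomorphisms of $\diae|_{\mathcal{U}}$. The non-logarithmic terms $B_k\,dz_k$ cause no trouble, since along the $z_k$-directions $\omega_U$ is equivalent to the Euclidean metric and $h$ is comparable to a smooth metric on a shrunk chart. Only the logarithmic terms require work. For each such term the dual Poincar\'e metric gives $|dz_i/z_i|_{\omega_U}^2\sim (\log|z_i|^2)^2$, so the task reduces to showing $|A_i|_h^2=O\bigl((\log|z_i|^2)^{-2}\bigr)$ as $z_i\to 0$.

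The hard part will be exactly this pointwise estimate on the residues. The strategy is to exploit the parabolic structure supplied by \cref{thm:SM}: with respect to the weight decomposition coming from the nilpotent/parabolic filtrations associated to each $D_i$, the Hodge metric $h$ is mutually bounded—up to polynomial factors in $\log|z_i|$—with an explicit model metric built from the $\mathrm{SL}_2$-orbit approximation along $\prod_i\{z_i=0\}$, and relative to this model metric the residues $A_i$ shift the weight strictly, which forces the required $1/\log$ decay. For $\mathbb{Z}$-PVHS this is the content of Schmid's nilpotent and $\mathrm{SL}_2$-orbit theorems together with the several-variable extension of Cattani--Kaplan--Schmid, under (quasi-)unipotent monodromy. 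In the $\mathbb{C}$-PVHS generality of the theorem no such hypothesis is available, and one must rely on Mochizuki's substantially more delicate analysis of tame harmonic bundles in \cite{Moc07}, whose norm estimates supply precisely the control needed. I would therefore cite these estimates rather than reprove them, and assemble the global bound $C$ by covering $Y$ with finitely many such charts.
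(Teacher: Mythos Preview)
The paper does not prove this theorem at all: it is stated as a black-box result with references to \cite[Lemma 10.1]{Sim88} and \cite{Moc07} and closed with a \qed. Your proposal is therefore not competing against an argument in the paper but against a bare citation, and since you too ultimately invoke Mochizuki's norm estimates in \cite{Moc07} for the hard part, the two ``proofs'' agree in substance. Your additional sketch---localizing to admissible charts, separating logarithmic from non-logarithmic terms, and explaining why the residues must decay like $(\log|z_i|)^{-1}$---is helpful exposition that the paper omits.

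One small correction: your claim that near the boundary ``$h$ is comparable to a smooth metric on a shrunk chart,'' used to dismiss the $B_k\,dz_k$ terms, is not accurate. The Hodge metric on $E$ genuinely degenerates along $D$ according to the parabolic weights, so a holomorphic endomorphism of $\diae$ need not have bounded $h$-norm for free; bounding $|B_k|_h$ already requires the same circle of estimates from \cite{Moc07} that you cite for the $A_i$. This does not damage your overall plan, since you defer the entire endgame to Mochizuki anyway, but the informal justification you give for that step is wrong and should simply be folded into the citation.
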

  
\begin{lem}\label{lem:bounded}
  In the notation above, $ \sn \Theta_{h_{\kL}}(\kL)$ is less singular than $\omega_{U}$, which we denote by
$$\sn \Theta_{h_{\kL}}(\kL)\lesssim \omega_{U}.$$ 
\end{lem}

\begin{proof}
By \cref{thm:norm}, one has
 $
|\theta_{p,q}|_{h,\omega_{U}}\leq C.
$ 
Then  $
|\theta_{p,q}^*|_{h,\omega_{U}}\leq C.
$ Hence 
$$
\left|\theta_{p,q}^*\wedge \theta_{p,q}\right|_{h,\omega_{U}}\leq C^2.
$$
It follows from \eqref{eq:Gc} that
$$
\left|\sn \Theta_{h_\kL}(\kL)\right|\leq C'
$$ 
for some constant $C'>0$. The lemma follows directly from the above inequality.
\end{proof}

By \cref{lem:bounded}, the mass of $\sn \Theta_{h_{\kL}}(\kL)$ is bounded near $D$, and one can thus apply   the Skoda extension theorem (see \cite[Theorem 2.3]{Dem97}) so that the trivial extension of $\sn \Theta_{h_{\kL}}(\kL)$ over $Y$ is a positive closed $(1,1)$-current, which is denoted by $S$.  The current $S$ is therefore less singular than the current $T$ defined in \cref{rem:Poincare}, which we denote by $S\lesssim T$.

Let us consider the extension $\pf\kL$ of $(\kL,h_{\kL})$ defined in \eqref{def:prolong}, where $\bm{1}=(1,\ldots,1)$. Then $h_{\kL}$ can be seen as the \emph{singular hermitian metric} for $\pf\kL$; this can be seen explicitly from the proof of the next lemma.  

\begin{lem}\label{lem33}
The curvature $\sn \Theta_{h_{\kL}}(\pf\kL)$ is a closed positive $(1,1)$-current. In particular, $\pf\kL$ is  a pseudo effective line bundle on $Y$.
\end{lem}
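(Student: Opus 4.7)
The plan is to show positivity of $\sn\Theta_{h_\kL}(\pf\kL)$ by exhibiting a plurisubharmonic local weight on every admissible coordinate chart, and then invoke the standard fact that a singular Hermitian metric with psh local weights has positive curvature current.

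First, by Griffiths's computation \eqref{eq:Gc}, the curvature $\sn\Theta_{h_\kL}(\kL)$ is already a smooth semipositive $(1,1)$-form on $U=Y-D$; this is the only piece of genuine Hodge-theoretic input needed. Next, fix an admissible coordinate chart $(\cU;z_1,\ldots,z_n)$ with $D\cap \cU=\{z_1\cdots z_\ell=0\}$, and let $e$ be a local holomorphic frame of the line bundle $\pf\kL$ over $\cU$. By the definition \eqref{def:prolong} of the prolongation with $\bm{a}=\bm{1}$, and by \cref{thm:SM} guaranteeing that $\pf\kL$ is a genuine locally free $\cO_Y$-module, one obtains a two-sided estimate
\[
\prod_{i=1}^{\ell}|z_i|^{-1+\ep}\;\lesssim\;|e|_{h_\kL}\;\lesssim\;\prod_{i=1}^{\ell}|z_i|^{-1-\ep}
\]
on $\cU-D$ for every $\ep>0$ (the upper bound is the defining growth condition, while the lower bound comes from $e$ being a \emph{generator} of the stalks of $\pf\kL$, which prevents it from decaying faster than the parabolic weight prescribes). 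In particular the weight function $\varphi:=-\log|e|_{h_\kL}^2$ is bounded above on compact subsets of $\cU$.

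Now on $\cU-D$ the weight $\varphi$ is smooth and satisfies $\sn\partial\bar\partial\varphi=\sn\Theta_{h_\kL}(\kL)\geq 0$, so $\varphi$ is plurisubharmonic on $\cU-D$. Combined with the upper bound from the previous step, the standard removable-singularity theorem for plurisubharmonic functions bounded above across an analytic subset (e.g.\ Demailly, \emph{Complex Analytic and Differential Geometry}, Theorem I.5.24) furnishes a unique plurisubharmonic extension of $\varphi$ across $D$ on all of $\cU$. By definition this extended psh weight represents $h_\kL$ as a singular Hermitian metric on $\pf\kL|_{\cU}$, and $\sn\partial\bar\partial\varphi$ equals the curvature current $\sn\Theta_{h_\kL}(\pf\kL)$ on $\cU$; being the $\sn\partial\bar\partial$ of a psh function, it is a positive closed $(1,1)$-current. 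Since positivity is a local property, this gives the global statement, and pseudoeffectivity of $\pf\kL$ follows from the existence of such a singular metric with positive curvature.

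The main obstacle is the two-sided growth estimate in the second step: the upper bound is tautological from the parabolic definition, but the lower bound — which is exactly what makes $\varphi$ bounded above near $D$, and thus what allows the psh extension — relies on $e$ being a minimal generator of the stalks of $\pf\kL$ along $D$. This genuinely uses the local freeness of the parabolic prolongation in \cref{thm:SM} (and, implicitly, the normalization of parabolic weights in $(-1,0]^{\ell}$ on the dual side). Once this local growth control is in hand, the remainder of the argument is purely pluripotential-theoretic.
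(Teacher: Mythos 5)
Your overall strategy — show the local weight $\varphi=-\log|e|_{h_\kL}^2$ is psh on $\cU\setminus D$ (from Griffiths semi-positivity) and locally bounded above near $D$, then invoke the removable-singularity theorem for psh functions — is sound in outline, and it genuinely differs from the paper's route: the paper never invokes the psh extension theorem. Instead it modifies the metric to $\tilde h_\kL:=h_\kL\,e^\psi$ (flat, since $S=\hess\psi$), explicitly constructs a flat multivalued section $\sigma(t)$, twists it to a single-valued holomorphic frame $\sigma(z)$, computes $|\sigma(z)|_{h_\kL}=\prod_i|z_i|^{-2b_i}e^{-\psi}$, verifies that $\sigma$ generates $\pf\kL$, and then reads off the curvature current as the explicit positive expression $S+\sum_i b_i[D_i]$ (equation~\eqref{eq:Siu}). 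That explicit formula is not a luxury: it is used immediately afterwards in \cref{lem:computation} and \cref{prop:bigness}. Your abstract argument, even if completed, would only give positivity and not the formula, so it would not be a drop-in replacement in the paper's logical flow.

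There are two genuine problems with the crucial step, the lower bound on $|e|_{h_\kL}$. First, the claimed estimate $\prod_i|z_i|^{-1+\ep}\lesssim|e|_{h_\kL}$ for all $\ep>0$ is simply false in general. The generator of $\pf\kL$ has norm $|e|_{h_\kL}\sim\prod_i|z_i|^{-b_i}$ (up to slowly varying factors) with parabolic weights $b_i\in(0,1]$. When $b_i<1$, the upper bound $|e|_{h_\kL}\lesssim\prod_i|z_i|^{-b_i-\ep}$ contradicts your claimed lower bound for $\ep<(1-b_i)/2$. The exponent $1$ appearing in the filtration index $\bm a=\bm 1$ is not the parabolic weight of $\kL$; conflating the two is the source of the error. (Your final conclusion --- that $\varphi$ is bounded above near $D$ --- still follows, because $b_i>0$ forces $|e|\to\infty$ near $D$, but you are asserting and then using a bound that is quantitatively wrong.) Second, and more fundamentally, you do not prove the lower bound; you assert it from ``$e$ being a generator''. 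The reasoning you sketch can be made rigorous --- if $b_i=0$ then $z_i^{-1}e$ would still satisfy the growth condition defining $\pf\kL$, so $e$ could not generate, forcing $b_i>0$ --- but this argument presupposes that $|e|_{h_\kL}$ has a well-defined polynomial-type growth exponent $b_i$ in the first place, which is itself part of the Simpson--Mochizuki structure theory and not automatic from bare local freeness. The paper avoids this circularity by constructing $\sigma$ from scratch and verifying directly (via $\varphi\lesssim\psi$, which uses $S\lesssim T$, which in turn uses the Simpson--Mochizuki norm estimate of \cref{thm:norm}) that $\sigma$ has the right two-sided growth and generates $\pf\kL$. So the gap is real: the centerpiece of your argument is the one estimate you do not establish, and what you write in its place is incorrect as stated.
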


\begin{proof}
	Pick any $p\in Y$. We take an admissible  coordinate $(W;z_1,\ldots,z_n)$ around $p$ as in \cref{def:admissible}. Since $S$ is a closed positive current on $Y$, over $W$ there is a plurisubharmonic function $\psi$ so that $S=\hess \psi$.	Note that $S\lesssim T$. One thus has
 $
\varphi\lesssim \psi
$,  
where $\varphi$ is defined in \eqref{eq:potential}.  For the new metric
 $
	\tilde{h}_\kL:=h_\kL\cdot e^{\psi} 
	$ of $\kL$, 
	one has $\Theta_{\tilde{h}_\kL}(\kL)= 0$ over $W-D\simeq (\Delta^*)^\ell\times \Delta^{n-\ell}$.  
	
	 Let $\nabla$ be the corresponding Chern connection of $(\kL,\tilde{h}_\kL)$, which is flat by the relation $\Theta_{\tilde{h}_\kL}(\kL)= 0$.  It corresponds to a unitary representation $\rho\colon \bZ^\ell\simeq \pi_1(W-D)\to U(1)$.    Let $\gamma_i$ be a clockwise loop around the origin in the $\supth{i}$ factor $   (\Delta^*)^\ell\times \Delta^{n-\ell}\simeq W-D$. Let $T_i=\rho([\gamma_i])\in U(1)$ be the monodromy  corresponding to the loop $\gamma_i$.

Consider the universal covering map 
\begin{align*}
\pi\colon	\mathbb{H}^\ell\times \Delta^{n-\ell}&\lra(\Delta^*)^\ell\times \Delta^{n-\ell}\\
	(t_1,\ldots,t_\ell,z_{\ell+1},\ldots,z_n)&\longmapsto \left(e^{2\pi\sn  t_1},\ldots,e^{2\pi\sn  t_\ell},z_{\ell+1},\ldots,z_n\right),
\end{align*}
where $\mathbb{H}=\{t\in \bC\mid {\Im}(z)>0\}$. 	Choose a flat section $\Phi$ of the flat line bundle $\pi^*(\kL,\nabla)$. Since $(\kL, \tilde{h}_{\kL})$ is unitary flat,   $|\Phi|_{\tilde{h}_{\kL}}$ is constant, and we may assume that $|\Phi|_{\tilde{h}_{\kL}}\equiv 1$. Recall that $T_i=\rho([\gamma_i])\in U(1)$ is the monodromy  corresponding to the loop $\gamma_i$; one has
\begin{align}\label{eq:monodromy}
	T_i\cdot \Phi(t_1,\ldots,t_\ell,z_{\ell+1},\ldots,z_n)=\Phi(t_1,\ldots,t_{i}+1,\ldots,t_\ell,z_{\ell+1},\ldots,z_n). 
\end{align}
Write $T_i=e^{2\pi\sn  b_i}$ for some $0<b_i\leq1$. 
	Define  a new section of $\pi^*\kL$ by
	$$
\Psi(t_1,\ldots,t_\ell,z_{\ell+1},\ldots,z_n):=\Phi(t_1,\ldots,t_\ell,z_{\ell+1},\ldots,z_n)e^{-2\pi\sn \sum_{i=1}^{\ell} b_it_i}.
	$$
By \eqref{eq:monodromy}, one has
$$
\Psi(t_1,\ldots,t_\ell,z_{\ell+1},\ldots,z_n):=\Psi(t_1,\ldots,t_{i}+1,\ldots,t_\ell,z_{\ell+1},\ldots,z_n)
$$  
for any $i=1,\ldots,\ell$. 
It thus  descends to  a section $\sigma(z)$ of $\kL|_{W-D}$; \textit{i.e.}, 
$$
\sigma(\pi(t_1,\ldots,t_\ell,z_{\ell+1},\ldots,z_n))=\Psi(t_1,\ldots,t_\ell,z_{\ell+1},\ldots,z_n).
$$
Note that $\nabla(\Phi)=0$; one has
 $$
 \nabla(\Psi)=\Phi\cdot e^{-2\pi\sn \sum_{i=1}^{\ell} b_it_i} \left(-2\pi\sn \sum_{i=1}^{\ell} b_i\cdot dt_i\right)=\Psi\cdot \left(-2\pi\sn \sum_{i=1}^{\ell} b_i\cdot dt_i\right).
 $$
 Hence 
	$$
	\nabla (\sigma(z))=-\sum_{i=1}^{\ell}b_i d\log z_i\cdot \sigma(z).
	$$
Therefore, $\sigma(z)$ is a holomorphic section trivializing $\kL|_{W-D}$. 
Note that 
	$$
	|\Psi|_{\tilde{h}_{\kL}}=\left|\Phi\cdot e^{-2\pi \sn \sum_{i=1}^{\ell} b_it_i}\right|= \left|e^{-2\pi \sn \sum_{i=1}^{\ell} b_it_i}\right|,
	$$
	where the second equality follows from the fact that $	|\Phi|_{\tilde{h}_{\kL}}\equiv 1$.  It follows that
  $|\sigma(z)|_{\tilde{h}_{\kL}}=\prod_{i=1}^{\ell}|z_i|^{-b_i}$, and thus
\begin{align}\label{norm}
	|\sigma(z)|_{h_{\kL}} =\prod_{i=1}^{\ell}|z_i|^{-2b_i}\cdot e^{-\psi}
\end{align}
by the relation $
\tilde{h}_\kL:=h_\kL\cdot e^{\psi} 
$. 
Since $\varphi\lesssim \psi$,  one has
$$1\lesssim e^{-\psi}\lesssim e^{-\varphi}\lesssim \left(\prod_{j=1}^{\ell}\left(-\log \left|z_j\right|^2\right)\right)^N$$
for some $N>0$, where the last inequality follows from \eqref{eq:potential}.  Therefore,
\begin{align}\label{eq:norm}
	\prod_{i=1}^{\ell}\frac{1}{|z_i|^{b_i-\ep}}\lesssim|\sigma(z)|_{h_{\kL}}\lesssim \prod_{i=1}^{\ell}\frac{1}{|z_i|^{b_i+\ep}}
\end{align}
	for any $\ep>0$. Since $0<b_i\leq 1$, one has $\sigma\in \pf\kL|_{W}$ by \eqref{def:prolong}. Let us show that $\sigma$ is a generator of $\pf\kL|_{W}$.
	
	For any section $s\in \pf\kL(W)$, there is a holomorphic function  $f\in \cO(W-D)$ so that $s=f\cdot \sigma$. By \eqref{def:prolong} again, 
	$$
|f|\cdot |\sigma|_{h_\kL}=|s|_{h_{\kL}}  \lesssim\frac{1}{\prod_{i=1}^{\ell}|z_i|^{1+\ep}}  	$$
	for all $\ep>0$. By \eqref{eq:norm}, one has
	$$
|f|  \lesssim\frac{1}{\prod_{i=1}^{\ell}|z_i|^{1-b_i+\ep}}  	$$
		for all $\ep>0$. Pick $\ep\ll 1$ with $1-b_i+\ep<1$ for all $i$. The above inequality shows  that $f$ extends to a holomorphic function over $W$. Hence $\sigma$ is a generator of $\pf\kL|_{W}$. 
		
	 By \eqref{norm}, one has
\begin{align}\label{eq:Siu}
	\sn \Theta_{h_\kL}(\pf\kL)=\hess \log |\sigma|_{h_{\kL}}=S+\sum_{i=1}^{\ell}b_i[D_i], 
	\end{align}
	where $[D_i]$ is the current of integration associated to $D_i$. This finishes the proof of the theorem.
	\end{proof}  

The following lemma is  a consequence of the above proof.

\begin{lem}\label{lem:computation}
  For any $N\in \bZ_{> 0}$, let $\pf(\kL^{\otimes N})$ be the extension of\, $(\kL^{\otimes N}, h_\kL^{\otimes N})$ defined in \eqref{def:prolong}. Then
  $$
  \pf\left(\kL^{\otimes N}\right)=(\pf\kL)^{\otimes N}\otimes \cO\left(-\sum_{i=1}^{\ell}(\,\lceil Nb_i\rceil-1) D_i\right). $$
\end{lem}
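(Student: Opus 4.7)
The plan is to carry out the same local computation as in the preceding lemma, but now for $(\kL^{\otimes N}, h_\kL^{\otimes N})$, and to read off the correction divisor from the non-integrality of the local parabolic weights $Nb_i$.

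First I would work locally on an admissible coordinate $(W; z_1,\ldots, z_n)$ around a point of $D$ as in \cref{def:admissible}. From the proof of the previous lemma, $\pf\kL|_W$ is freely generated by the multi-valued flat section made single-valued as $\sigma$, which satisfies
\begin{equation*}
|\sigma(z)|_{h_{\kL}} = \prod_{i=1}^{\ell} |z_i|^{-b_i}\cdot e^{-\psi},
\end{equation*}
where $e^{-\psi}$ is pinched between $1$ and a power of $\prod_j(-\log|z_j|^2)$. Taking $N$-th tensor powers, the section $\sigma^{\otimes N}$ trivializes $\kL^{\otimes N}|_{W-D}$ and
\begin{equation*}
|\sigma^{\otimes N}(z)|_{h_{\kL}^{\otimes N}} = \prod_{i=1}^{\ell} |z_i|^{-Nb_i}\cdot e^{-N\psi}.
\end{equation*}
Thus, up to poly-logarithmic factors which can be absorbed into arbitrarily small $\ep$, one has the same two-sided comparison as \eqref{eq:norm} but with $b_i$ replaced by $Nb_i$.

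Next I would describe $\pf(\kL^{\otimes N})(W)$ via the formula \eqref{def:prolong}. Any local section writes $s = f\cdot \sigma^{\otimes N}$ with $f$ holomorphic on $W-D$, and the defining growth condition becomes
\begin{equation*}
|f(z)| \lesssim \prod_{i=1}^{\ell} |z_i|^{Nb_i - 1 - \ep} \quad \text{for every } \ep>0.
\end{equation*}
By a one-variable-at-a-time Riemann extension argument (applied inductively in each $z_i$), this forces $f$ to extend holomorphically across each $D_i\cap W$ with vanishing order at least $\lceil Nb_i -1\rceil = \lceil Nb_i\rceil -1$ along $D_i$; conversely, any holomorphic function on $W$ vanishing to these orders produces an admissible section. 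Consequently
\begin{equation*}
\pf(\kL^{\otimes N})|_W = \sigma^{\otimes N}\cdot \cO_Y\!\left(-\sum_{i=1}^{\ell}(\lceil Nb_i\rceil -1)D_i\right)\Big|_W,
\end{equation*}
while the previous lemma already yields $(\pf\kL)^{\otimes N}|_W = \sigma^{\otimes N}\cdot \cO_W$.

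Finally I would globalize. Each $b_i\in(0,1]$ depends only on the unitary monodromy of the flat line bundle $(\kL,\tilde h_\kL)$ around the irreducible component $D_i$, hence is intrinsic to $D_i$ and independent of the chosen admissible coordinate. The local isomorphisms above are therefore compatible on overlaps (the ratio of two local generators $\sigma^{\otimes N}$ is a nowhere vanishing holomorphic function on $W_1\cap W_2 - D$, which is exactly the same ratio that glues $(\pf\kL)^{\otimes N}$), and patch together to the claimed global identification. The only subtle point is the Riemann-extension step above, i.e.\ turning the poly-logarithmic slack from $e^{-N\psi}$ and the $\ep$-slack in \eqref{def:prolong} into integer order-of-vanishing statements; but this is exactly the argument already used at the end of the proof of the previous lemma, applied to each variable in turn, so no new ideas are required.
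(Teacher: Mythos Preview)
Your proof is correct and follows essentially the same approach as the paper: both arguments trivialize $(\pf\kL)^{\otimes N}$ locally by $\sigma^{\otimes N}$, write an arbitrary section of $\pf(\kL^{\otimes N})$ as $f\cdot\sigma^{\otimes N}$, and translate the growth condition from \eqref{def:prolong} into the vanishing order $\lceil Nb_i\rceil-1$ along each $D_i$, together with the converse inclusion. Your additional remarks on globalization and the intrinsic nature of the $b_i$ are more explicit than the paper's terse ``This yields the lemma,'' but there is no substantive difference in the argument.
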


\begin{proof}
We use the same notation as that in the proof of Lemma~\ref{lem33}.	Consider the section $\sigma^N$, which is a generator of $(\pf\kL)^{\otimes N}|_{W}$.  	For any section $s\in \pf(\kL^{\otimes N})(W)$, there is a holomorphic function  $f\in \cO(W^*)$ so that $s=f\cdot \sigma^N$, where $W^*:=W-D$. By \eqref{def:prolong} again, one has
$$
|f|\cdot |\sigma^N|_{h^{\otimes N}_\kL}=|s|_{h^{\otimes N}_{\kL}}  \lesssim\frac{1}{\prod_{i=1}^{\ell}|z_i|^{1+\ep}}  	$$
for all $\ep>0$. By \eqref{eq:norm}, one has
$$
|f|  \lesssim\frac{1}{\prod_{i=1}^{\ell}|z_i|^{1-Nb_i+\ep}}  	$$
for all $\ep>0$. This shows  that $f\in \cO_Y(-\sum_{i=1}^{\ell}(\lceil Nb_i\rceil-1) D_i)$.

On the other hand, if $g\in \cO_Y(-\sum_{i=1}^{\ell}(\lceil Nb_i\rceil-1) D_i)$, then by \eqref{eq:norm}, one has
$$
|g\cdot \sigma^N|  \lesssim\frac{1}{\prod_{i=1}^{\ell}|z_i|^{1-\lceil Nb_i\rceil+Nb_i+\ep}}  \lesssim\frac{1}{\prod_{i=1}^{\ell}|z_i|^{1+\ep}}
$$
for any $\ep>0$. This yields the lemma. 
	\end{proof}

In summary, we have the following positivity result for Griffiths line bundles.

\begin{proposition}\label{prop:bigness}
Let $(Y,D)$ be a compact  K\"ahler log pair. Let $\shb$ be a system of Hodge bundles  over   $Y-D$. 	Assume that its period map is immersive at one point. Then
 $
	\dia(\kL^{\otimes N})\otimes \cO_Y(-D)
	$  is a big line bundle on $Y$ for $N\gg 1$. In particular, $Y$ is projective.
\end{proposition}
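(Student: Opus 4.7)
The plan is to exhibit a K\"ahler current in $c_1(L_N)$ for $N\gg 0$, where $L_N := \dia(\kL^{\otimes N})\otimes \cO_Y(-D)$; this forces $L_N$ to be big, hence $Y$ to be Moishezon, and thus projective by Moishezon's theorem. The first step is a Chern-class identification. Repeating the local computation in the proof of \cref{lem:computation} verbatim but for weight $\bm{0}$ in place of $\bm{1}$ (i.e.\ replacing the growth bound $\lesssim \prod|z_i|^{-1-\epsilon}$ by $\lesssim \prod|z_i|^{-\epsilon}$), a holomorphic section $f\sigma_0^N$ belongs to $\dia(\kL^{\otimes N})$ iff $\mathrm{ord}_{D_i}(f)\geq \lceil Nb_i\rceil$, whence
\[
\dia(\kL^{\otimes N}) = (\pf\kL)^{\otimes N}\otimes \cO_Y\Bigl(-\sum_i\lceil Nb_i\rceil D_i\Bigr),
\]
so $L_N = (\pf\kL)^{\otimes N}\otimes \cO_Y(-\sum_i(\lceil Nb_i\rceil+1)D_i)$. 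Substituting $c_1(\pf\kL)=[S]+\sum_i b_i\,c_1(\cO(D_i))$ coming from \eqref{eq:Siu} yields
\[
c_1(L_N)=N[S]-\sum_{i=1}^{\ell}\varepsilon_i(N)\,c_1(\cO(D_i)),\qquad \varepsilon_i(N):=\lceil Nb_i\rceil+1-Nb_i\in[1,2),
\]
so $\varepsilon_i(N)$ is bounded independently of $N$.

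\textbf{Bigness of $[S]$.} The key claim is that the real class $[S]\in H^{1,1}(Y,\bR)$ is big. Since $S\geq 0$ as a current, $[S]$ is pseudo-effective; by Boucksom's volume criterion for bigness on compact K\"ahler manifolds, it suffices to verify $\int_Y S^n_{\mathrm{np}}>0$, where $S^n_{\mathrm{np}}$ denotes the non-pluripolar self-product. Since $D$ is pluripolar and $S|_U=\sn\Theta_{h_\kL}(\kL)$ is a smooth semi-positive $(1,1)$-form, one has
\[
\int_Y S^n_{\mathrm{np}}=\int_U \bigl(\sn\Theta_{h_\kL}(\kL)\bigr)^n.
\]
By \cref{lem:bounded} the integrand is bounded above by a constant multiple of $\omega_U^n$ where $\omega_U$ is a complete K\"ahler metric of Poincar\'e type with finite total volume, so the integral is finite. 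On the other hand, at the point $y\in U$ where the period map is immersive, $\sn\Theta_{h_\kL}(\kL)|_y$ is strictly positive as a hermitian $(1,1)$-form, so by continuity $(\sn\Theta_{h_\kL}(\kL))^n$ is a positive volume form on an open neighborhood of $y$, giving $\int_U (\sn\Theta_{h_\kL}(\kL))^n>0$. Hence $[S]$ is big.

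\textbf{Conclusion and main obstacle.} Since $[S]$ is big, pick a K\"ahler current $T_\alpha\in[S]$ with $T_\alpha\geq\delta\omega_Y$ for some K\"ahler form $\omega_Y$ on $Y$ and $\delta>0$, and fix smooth representatives $\omega_i\in c_1(\cO(D_i))$ with $\pm\omega_i\leq C_i\omega_Y$. Then the closed $(1,1)$-current
\[
T_N:=NT_\alpha-\sum_{i=1}^{\ell}\varepsilon_i(N)\omega_i\in c_1(L_N)
\]
satisfies $T_N\geq (N\delta-2\sum_i C_i)\omega_Y\geq \omega_Y$ for $N\gg 0$, because $\varepsilon_i(N)\leq 2$ uniformly in $N$. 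Thus $T_N$ is a K\"ahler current, $L_N$ is big, and $Y$ is projective by Moishezon's theorem. The main obstacle is the middle step: promoting the \emph{pointwise} strict positivity of the Griffiths curvature at the single interior point $y$ to a \emph{global} bigness statement for $[S]$ on the a priori non-algebraic K\"ahler manifold $Y$. This rests simultaneously on Boucksom's volume criterion in the K\"ahler setting and on the Simpson--Mochizuki norm bound \cref{thm:norm}, which is precisely what ensures finiteness of $\int_U(\sn\Theta_{h_\kL}(\kL))^n$ in spite of the non-compactness of $U$.
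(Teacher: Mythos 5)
Your proof is correct and follows the paper's approach: compute $c_1(\dia(\kL^{\otimes N})\otimes\cO_Y(-D))=N\{S\}-\sum_i\varepsilon_i(N)\{D_i\}$ with $\varepsilon_i(N)$ bounded using \cref{lem:computation} and \eqref{eq:Siu}, establish that $\{S\}$ is big by Boucksom's criterion from the strict positivity of the Griffiths curvature at the immersive point, and absorb the bounded $D$-terms for $N\gg 0$. Your elaboration of the Boucksom step via the non-pluripolar volume (the paper simply cites \cite{Bou02}) and your explicit K\"ahler-current bound at the end (where the paper instead writes $c_1=(N\{S\}-2D)+\text{effective}$) are fleshed-out versions of the same argument and are both valid.
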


\begin{proof}
Recall that the closed positive current $S$ is the trivial extension of the semipositive $(1,1)$-form $\Theta_{h_\kL}(\kL)$ over $Y$.  By \eqref{eq:Siu}, one has
$$
\{c_1(\pf\kL)\}=\{S\}+\sum_{i=1}^{\ell}b_i\{D_i\}.
$$
\cref{lem:computation} then yields 
$$
\left\{c_1\left(\pf\left(\kL^{\otimes N}\right)\right)\right\}=N\{S\}+\sum_{i=1}^{\ell}(Nb_i-\lceil Nb_i\rceil+1)\{D_i\}.
$$
 Note that 
$$\pf\left(\kL^{\otimes N}\right)=\dia\left(\kL^{\otimes N}\right)\otimes \cO_Y(D).$$
Therefore,
$$c_1\left(\dia\left(\kL^{\otimes N}\right)\otimes \cO_Y(-D)\right)=N\{S\}+\sum_{i=1}^{\ell}(-1+Nb_i-\lceil Nb_i\rceil)\{D_i\}.$$

By the   discussion at the beginning of this subsection, the semipositive $(1,1)$-form $\Theta_{h_\kL}(\kL)$  is strictly positive at the point where the period map is immersive. By Boucksom's criterion \cite{Bou02},   the cohomology class $\{S\}$ is a big $(1,1)$-class. Therefore,  
 $
N\{S\}-2D
$ 
is big for $N\gg 1$. Note that
$$
1+Nb_i-\lceil Nb_i\rceil\geq 0.
$$
Since the sum of a big class with an effective class is still big, we conclude that  $c_1(\dia(\kL^{\otimes N})\otimes \cO_Y(-D))$ is big. This proves the lemma.
	\end{proof} 

\subsection{Special system of Hodge bundles} 
  Let $(Y,D)$ be a compact K\"ahler log pair.   Let $(F=\oplus_{p+q=m}F^{p,q},\eta,h_F)$ be a system of  Hodge bundle  over $U:=Y-D$ whose period map is  immersive at one point.    Let us write $r_p:=\rank\, F^{p,q}$. 
Recall that the Griffiths line bundle for $(F=\oplus_{p+q=m}F^{p,q},\eta,h_F)$  is 
$$
\kL:= \left(\det F^{m,0}\right)^{\otimes m}\otimes\left(\det F^{m-1,1}\right)^{\otimes(m-1)}\otimes\cdots\otimes  \det F^{1,m-1}.
$$
By \cref{prop:bigness}, $\dia(\kL^{\otimes N})\otimes \cO_Y(-D)$ is a big line bundle for some $N\gg 1$. Let us write \[r:=N\big(mr_m+(m-1)r_{m-1}+\cdots+r_1\big).\]

 We define a new system of  Hodge bundle $(E=\oplus_{P+Q=rm}E^{P,Q},\theta,h)$ on $U=Y-D$ by setting $(E,\theta,h):=({F},{\eta},h_F)^{\otimes r}$. Precisely,  $E:=F^{\otimes r}$, 
and 
$$
\theta:=\eta\otimes \underbrace{\mathds{1}\otimes\cdots\otimes\mathds{1}}_{(r-1)-\text{tuple}}+\mathds{1}\otimes\eta\otimes \underbrace{\mathds{1}\otimes\cdots\otimes\mathds{1}}_{(r-2)-\text{tuple}}+\cdots+ \underbrace{\mathds{1}\otimes\cdots\otimes\mathds{1}}_{(r-1)-\text{tuple}}\otimes \eta.
$$
Define
\begin{align}\label{eq:decomposition}
E^{P,Q}:=\oplus_{p_1+\cdots+p_r=P;q_1+\cdots+q_r=Q}F^{p_1,q_1}\otimes\cdots\otimes  F^{p_r,q_r}.
\end{align} 
Then we have 
$$
\theta\colon E^{P,Q}\lra E^{P-1,Q+1}\otimes \Omega^1_{U},
$$
and one can easily check that $h=h_F^{\otimes r}$ is the Hodge metric for $(E=\oplus_{P+Q=rm}E^{P,Q},\theta)$.

Note that $\det F^{p,q}=\wedge^{r_p}F^{p,q}\subset (F^{p,q})^{\otimes r_p}\subset F^{\otimes r_p}$. Hence   
\begin{align*}
	\kL^{\otimes N} &= \left(\det F^{m,0}\right)^{\otimes Nm}\otimes\left(\det F^{m-1,1}\right)^{\otimes(N(m-1))}\otimes\cdots\otimes  \left(\det F^{1,m-1}\right)^{\otimes N}\\
	 &\subset \left(F^{m,0}\right)^{\otimes Nmr_m}\otimes\cdots\otimes \left(F^{1,m-1}\right)^{\otimes Nr_1}\subset E^{P_0,Q_0}, 
\end{align*}
where $P_0=N(r_mm^2+r_{m-1}(m-1)^2+\cdots+r_1)$ and $Q_0=rm-P_0$. In other words, $\kL^{\otimes N}$ is a subbundle of $E^{P_0,Q_0}$. Moreover, their hermitian metrics are compatible in the following sense:
 $h_{\kL}^{\otimes N}=h|_{\kL}$.  
By the very definition of the extension \eqref{def:prolong}, one has 
$$
\dia \left(\kL^{\otimes N}\right)\subset \diae^{P_0,Q_0}.
$$

In summary, we construct a \emph{special system of log Hodge bundles} on  $(Y,D)$ as follows (we change the notation for  brevity's sake). 

\begin{thm}\label{prop:new}
Let $(Y,D)$ be a compact K\"ahler log pair.  Let $(F=\oplus_{p+q=m}F^{p,q},\eta,h_F)$ be a system of  Hodge bundles  over $Y-D$ whose period map is  immersive at one point.  Then there is a system of log Hodge bundles $(E=\oplus_{p+q=\ell}E^{p,q},\theta=\oplus_{p+q=\ell}\theta_{p,q})$ on   $(Y,D)$   satisfying the following properties: 
\begin{thmlist}
	\item \label{cond:monodromy} The pair  $(E,\theta)$ is the canonical extension $($in the sense of \cref{def:canonical}\,$)$   of some system of   Hodge bundles $(\tilde{E},\tilde{\theta},h_{\hod})$ defined over $Y-D$. 
	\item \label{cond:big} There is a big   line bundle $L$ over $Y$ such that $L\subset E^{p_0,q_0}$ for some $p_0+q_0=\ell$, and $L\otimes \cO_Y(-D)$ is still big.
 	\item  \label{augmented base} If the period map  moreover has zero-dimensional fibers, then the augmented base locus satisfies $\mathbf{B}_+(L)\subset D$.
	\end{thmlist}
	\end{thm}

\begin{rem}\label{rem:analogue} The interested readers can compare the Higgs bundle in \cref{prop:new}  with  the Viehweg--Zuo Higgs bundle in \cite{VZ02,VZ03} (see also \cite{PTW18}). Loosely speaking,  a \emph{Viehweg--Zuo Higgs bundle} for a log pair $(Y,D)$ is a Higgs bundle $(E=\oplus_{p+q=m}E^{p,q},\theta)$ over $(Y, D+S)$   induced by some (geometric) $\bZ$-PVHS defined over a Zariski open subset of $Y-(D\cup S)$, where $S$ is another divisor on $Y$ so that $D+S$ is  simple normal crossing. The extra data is that there is a sub-Higgs sheaf $(F=\oplus_{p+q=m} F^{p,q},\eta)\subset (E,\theta)$ such that the \emph{first stage} $F^{n,0}$ is a big line bundle, and that we have 
	$$\eta\colon F^{p,q}\lra F^{p-1,q+1}\otimes \Omega_{Y}^1(\log D).$$
	As we explained in \Cref{sec:describe}, the positivity $F^{n,0}$   comes in a sophisticated way from   Kawamata's big line bundle $\det f_*(mK_{X/Y})$, where $f\colon X\to Y$ is some algebraic fiber space between projective manifolds.  For our Higgs bundle $(E=\oplus_{p+q=m}E^{p,q},\theta)$ over the log pair $(Y,D)$ in \cref{prop:new}, the global positivity is the Griffiths line bundle which is  contained in some \emph{intermediate stage} $E^{p_0,q_0}$ of $(E=\oplus_{p+q=m}E^{p,q},\theta)$.
\end{rem}

\subsection{Iterating Higgs fields}\label{sec:iterate}
 Let $(E=\oplus_{p+q=\ell}E^{p,q},\theta)$ be the system of log Hodge bundles on a compact K\"ahler log pair $(Y,D)$ satisfying the  two conditions in \cref{prop:new}. We apply   ideas  by Viehweg--Zuo \cite{VZ02,VZ03} to iterate Higgs fields.

Since we have $\theta\colon E^{p,q}\to E^{p-1,q+1}\otimes \Omega_Y^1(\log D)$, one can iterate $\theta$   $k$ times to obtain
$$
 E^{p_0,q_0}\lra E^{p_0-1,q_0+1}\otimes \Omega_Y^1(\log D)\lra \cdots \lra E^{p_0-k,q_0+k}\otimes  \otimes^k\Omega_Y^1(\log D).
$$
Since $\theta\wedge\theta=0$, the above morphism factors through
\begin{align}\label{iterate}
 E^{p_0,q_0}  \lra E^{p_0-k,q_0+k}\otimes {\Sym}^k\Omega_Y^1(\log D).
\end{align}
Since $L$ is a subsheaf of $E^{p_0,q_0}$, it induces
$$
 L\lra E^{p_0-k,q_0+k}\otimes {\Sym}^k\Omega_Y^1(\log D),
$$
which is equivalent to a morphism
\begin{align}\label{iterated Kodaira2} 
\tau_k\colon  {\Sym}^k T_Y(-\log D)\lra L^{-1}\otimes E^{p_0-k,q_0+k}.
\end{align}
The readers might be worried that all $\tau_k$ might be trivial, so that the above construction will be meaningless. In the next subsection, we will show that this  cannot happen.

\subsection{An infinitesimal Torelli-type theorem}   \label{sec:Torelli}

We begin with the following technical lemma.

  \begin{proposition}\label{singular metric}
  Let  $(E=\oplus_{p+q=\ell}E^{p,q},\theta)$ be a system of log Hodge bundles on  a compact K\"ahler log pair $(Y,D)$ satisfying the  two conditions in \cref{prop:new}. Then there is a singular hermitian metric $h_L$ with analytic singularities  for $L$ such that
  \begin{thmlist} 
		\item\label{estimate}    the curvature current  satisfies
		\[
		\sqrt{-1}\Theta_{h_{L}}(L) \geq T,
		\]
		where $T$ is the K\"ahler current on $Y$ defined in \cref{rem:Poincare}; 
		\item \label{bounded} the singular hermitian metric \(h:=h_{L}^{-1}\otimes h_{\hod} \) on \(  L^{-1}\otimes E\) is locally bounded  on \(Y\) and smooth outside \(D\cup \mathbf{B}_+(L-D)\), where $h_{\hod}$ is the Hodge metric for the system of Hodge bundles $(E=\oplus_{p+q=\ell}E^{p,q},\theta)|_{U}$.  Moreover, \(h\cdot \prod_{i=1}^{\ell}|\sigma_i|_{h_i}^{-\ep}\) vanishes  on \(D\cup \mathbf{B}_+(L-D)\) for $\ep>0$ small enough.  	Here $\sigma_i$ is  the canonical section in $H^0(Y,\cO_Y(D_i))$ defining $D_i$, and   $h_i$ is a  smooth metric for the line bundle $\cO_Y(D_i)$.  
	\end{thmlist}  
\end{proposition}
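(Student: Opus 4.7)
The metric $h_{L}$ will be built as a tensor $h_{L}:=h_{L-D}\otimes h_{D}$, where $h_{L-D}$ supplies K\"ahler-current positivity from the bigness of $L\otimes\mathcal{O}_{Y}(-D)$, and $h_{D}$ encodes the Poincar\'e growth of the current $T$ from \cref{rem:Poincare}.

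First I would apply Kodaira's lemma to $L\otimes\mathcal{O}_{Y}(-D)$, which is big by \cref{cond:big}, writing $n(L-D)\sim A+E_{0}$ for some $n\gg 0$, $A$ ample and $E_{0}$ effective. Taking $A$ sufficiently ample yields a singular Hermitian metric $h_{L-D}$ on $L-D$ with analytic singularities supported on $\mathbf{B}_{+}(L-D)$, smooth elsewhere, whose curvature dominates an arbitrarily large multiple of a fixed K\"ahler form $\omega_{0}$ on $Y$. Next, for the line bundle $\mathcal{O}_{Y}(D)$ I would define
\[
h_{D}:=\Big(\prod_{i}h_{i}\Big)\cdot \prod_{i}\bigl(-\log|\epsilon\sigma_{i}|_{h_{i}}^{2}\bigr)^{K},
\]
with $K\geq 1$ a parameter to be fixed later. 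A direct computation using \cref{rem:Poincare} gives $\sqrt{-1}\Theta_{h_{D}}(\mathcal{O}_{Y}(D))=\sqrt{-1}\Theta(\prod h_{i})+K(T-\omega_{0})$. Setting $h_{L}:=h_{L-D}\otimes h_{D}$, the strong positivity of $h_{L-D}$ absorbs all smooth error terms and yields $\sqrt{-1}\Theta_{h_{L}}(L)\geq T$, proving \cref{estimate}. By construction $h_{L}$ is smooth outside $D\cup\mathbf{B}_{+}(L-D)$, with analytic singularities on $\mathbf{B}_{+}(L-D)$ and purely logarithmic behaviour along $D$.

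For \cref{bounded} the boundedness of $h:=h_{L}^{-1}\otimes h_{\rm hod}$ will follow by matching two sharp norm estimates. By the definition of the canonical extension $\diae=\mathcal{P}^{h}_{\bm{0}}E$, every local section $e$ of $E$ at a boundary point satisfies $|e|_{h_{\rm hod}}\lesssim \prod_{i}|z_{i}|^{-\epsilon}$ for all $\epsilon>0$; combined with the Simpson--Mochizuki norm estimates underlying \cref{thm:norm} and compactness of $Y$, this refines to a uniform poly-log bound $|e|_{h_{\rm hod}}\lesssim \prod_{i}(-\log|z_{i}|^{2})^{N/2}$ for some $N$ depending only on the parabolic Higgs bundle $(E,\theta)$. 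On the other hand, a local generator $s$ of $L$ satisfies $|s|_{h_{L}}^{2}\gtrsim \prod_{i}(-\log|z_{i}|^{2})^{K}$ near $D$ and away from $\mathbf{B}_{+}(L-D)$, thanks to the Poincar\'e factor in $h_{D}$. Choosing $K>N$, the ratio $|e|_{h_{\rm hod}}^{2}/|s|_{h_{L}}^{2}$ is uniformly bounded near $D$; near $\mathbf{B}_{+}(L-D)$ the analytic singularity of $h_{L-D}$ drives $h$ to zero. The vanishing of $h\cdot\prod|\sigma_{i}|^{-\epsilon}$ on $D\cup\mathbf{B}_{+}(L-D)$ then follows for $\epsilon>0$ small, because the log/analytic decay of $h$ beats the polynomial blowup $\prod|z_{i}|^{-\epsilon}$.

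The main obstacle will be the quantitative Hodge-theoretic input in the second part: the abstract prolongation bound $|e|\lesssim \prod|z_{i}|^{-\epsilon}$ for every $\epsilon>0$ is a priori weaker than a poly-log bound, so one must invoke the Simpson--Mochizuki asymptotic analysis of harmonic bundles on a log pair to upgrade it to $|e|\lesssim \prod(-\log|z_{i}|)^{N/2}$ with $N$ uniform on the compact $Y$. A secondary delicate point is choosing $A$ sufficiently ample in the Kodaira step so that the smooth curvature errors from $\sqrt{-1}\Theta(\prod h_{i})-K\omega_{0}$ are genuinely absorbed; this forces $K$ to be fixed \emph{before} the choice of $A$.
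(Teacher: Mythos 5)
The crux of your construction diverges from the paper at the choice of metric on $\cO_Y(D)$, and that choice breaks the last assertion of \cref{bounded}. The paper takes $h_L = g\cdot h_D$ where $g = g_0\cdot(-\prod\log|\ep\sigma_i|^2)$ is a metric on $L-D$ and $h_D$ is the \emph{canonical singular} metric on $\cO_Y(D)$ with $\sn\Theta_{h_D} = [D]$; this gives $h_L$ a \emph{polynomial} singularity along $D$, i.e.\ a generator $e$ of $L|_W$ satisfies $|e|^2_{h_L}\gtrsim \prod|z_i|^{-1}$. Combined with the prolongation bound $|e_i|_{h_{\rm hod}}\lesssim\prod|z_i|^{-\ep}$ for every $\ep>0$, which requires no refined asymptotics, this makes $h$ decay like a positive power of $\prod|z_i|$ near $D$, and multiplying by $\prod|\sigma_i|^{-\ep}$ with $\ep$ small preserves that decay. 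You instead put $h_D = (\prod h_i)\cdot\prod(-\log|\ep\sigma_i|^2)^K$, so $|e|^2_{h_L}$ grows only like $\prod(-\log|z_i|^2)^K$ near $D$.

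This creates two problems. The first is the one you flag yourself: your boundedness of $h$ now requires a poly-logarithmic upper bound $|e_i|^2_{h_{\rm hod}}\lesssim\prod(-\log|z_i|^2)^N$, which is strictly stronger than the defining estimate \eqref{def:prolong} of the prolongation. It is indeed available from Mochizuki's asymptotic analysis, but you neither prove it nor point to the precise statement, whereas the paper's route deliberately avoids invoking it. The second problem is fatal and is not one you flag: even granting the poly-log bound and $K>N$, your $h$ decays near $D$ only like $\prod(-\log|z_i|^2)^{-(K-N)}$, while $\prod|\sigma_i|_{h_i}^{-\ep}\sim\prod|z_i|^{-\ep}$ blows up polynomially. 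The product $\prod|z_i|^{-\ep}\cdot\prod(-\log|z_i|^2)^{-(K-N)}$ tends to $+\infty$ on $D$ for every $\ep>0$ and every $K>N$, so the asserted vanishing of $h\cdot\prod|\sigma_i|^{-\ep}$ fails. Your closing sentence, claiming the logarithmic decay of $h$ beats the polynomial blowup, has the comparison backwards: a polynomial in $|z|^{-1}$ always dominates any power of $\log|z|^{-1}$. To get the required polynomial decay of $h$ along $D$ you genuinely need the canonical singular $h_D$ with curvature $[D]$ (or some other factor giving polynomial blowup of $h_L$ along $D$); a Poincar\'e log weight alone is too weak.
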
 

\begin{proof}
	By \cref{cond:big},  the line bundle \(L\otimes \cO_Y(-D)\) is big, and thus by \cite[Theorem 3.17]{Bou04}, we can put a  singular hermitian metric $g_0$ on it with analytic singularities for $L\otimes \cO_Y(-D)$ such that \(g_0\) is smooth on $Y\setminus \mathbf{B}_+(L\otimes \cO_Y(-D))$, where $\mathbf{B}_+(L\otimes \cO_Y(-D))$ is the augmented base locus of $L\otimes \cO_Y(-D)$,  and the curvature current satisfies
	$
	\sqrt{-1}\Theta_{g_0}(L-D)\geqslant \omega
	$
	for some smooth K\"ahler form \(\omega \) on $Y$.   Take $g:=g_0(-\prod_{i=1}^{\ell}\log  |\ep\cdot \sigma_i|_{\cdot h_i}^2)$. Then 
	$$
	\sn \Theta_{g}(L-D)\geqslant T:=\omega-\hess \log \left(-\prod_{i=1}^{\ell}\log  |\ep\cdot \sigma_i|_{h_i}^2\right).
	$$ 
	Note that  $T$ is a K\"ahler current when $0<\ep\ll 1$.
	
	Let $h_D$ be the canonical singular hermitian metric for $D$ so that $\sn \Theta_{h_D}(\cO_Y(D))=[D]$.  We  define a singular hermitian metric on $L$ as follows:
	\[
	h_L:=g\cdot h_D.
	\] 
	Then 	\[
	\sqrt{-1}\Theta_{h_{L}}(L)=	 
	\sqrt{-1}\Theta_{g}(L\otimes \cO_X(-D))+[D]  \geq T.
	\]
	The first condition is verified. 
	
Note that $g^{-1}$ vanishes on $\mathbf{B}_+(L\otimes \cO_Y(-D))$, and $h_D^{-1}$ vanishes on $D$.	Since $h_{\hod}$ is smooth over $Y-D$, we have $\mathbf{B}_+(L)\subset \mathbf{B}_+(L\otimes \cO_Y(-D))$, so $h:=h_{\hod}\cdot h_L^{-1}$  vanishes on $\mathbf{B}_+(L)-D$. For any point $y\in D$, we pick an admissible coordinate $(W;z_1,\ldots,z_n)$ and a frame $(e_1,\ldots,e_r)$ for $E|_{W}$. Since $(E,\theta)$ is the canonical extension of a system of Hodge bundles $(\tilde{E},\tilde{\theta},h_{\hod})$, by \eqref{def:prolong} one has 
$$
|e_i|_h  \lesssim\frac{1}{\prod_{i=1}^{\ell}|z_i|^{\ep}}
$$
for all $\ep>0$.    Pick a section $e\in L(W)$ which trivializes $L|_{W}$. By the definition of $h_L$, one has
$$
|e|_{h_L}^2\gtrsim  \frac{1}{\prod_{i=1}^{\ell}|z_i|}. 
$$
Hence for the frame $(e_1\otimes e^{-1},\ldots,e_r\otimes e^{-1})$ trivializing $E\otimes L^{-1}|_W$, one has
$$
|e_i\otimes e^{-1}|_h  \lesssim {\prod_{i=1}^{\ell}|z_i|^{1-\ep}}
$$
for any $\ep>0$. This shows that $h\cdot \prod_{i=1}^{\ell}|\sigma_i|_{h_i}^{-\ep}$ vanishes on $D$ when $\ep>0$ is small enough. The proposition is proved.
	\end{proof}

\begin{thm}[Infinitesimal Torelli-type property]\label{Deng}
	The morphism $\tau_1\colon T_Y(-\log D)\to L^{-1}\otimes E^{p_0-1,q_0+1}$ defined in \eqref{iterated Kodaira2} is   generically injective.
\end{thm}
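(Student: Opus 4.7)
The goal is generic injectivity of $\tau_1$, which by lower semi-continuity of rank reduces to finding one point $y_0\in U=Y-D$ at which $\tau_1|_{y_0}$ is an injective linear map. I would choose $y_0$ (which exists by hypothesis) to be a point where the period map of $(F,\eta,h_F)$ is immersive, i.e.\ where $\eta|_{y_0}\colon T_{Y,y_0}\to \End(F|_{y_0})$ is injective, so $\eta(v)\ne 0$ for every $0\ne v\in T_{Y,y_0}$.

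At $y_0\in U$, the canonical extension coincides with the smooth data: $E|_{y_0}=F^{\otimes r}|_{y_0}$ carries its induced Hodge decomposition \eqref{eq:decomposition}; the Higgs field acts by Leibniz as $\theta(v)=\sum_{i=1}^{r}\operatorname{id}\otimes\cdots\otimes \eta(v)\otimes\cdots\otimes \operatorname{id}$; and $L|_{y_0}=\kL^{\otimes N}|_{y_0}$ embeds into $E^{p_0,q_0}|_{y_0}$ via the antisymmetrizations $\det F^{p,q}\hookrightarrow (F^{p,q})^{\otimes r_p}$ tensored together. Unwinding \eqref{iterated Kodaira2}, injectivity of $\tau_1|_{y_0}$ is equivalent to the pointwise statement: $\theta(v)\cdot s\ne 0$ in $E^{p_0-1,q_0+1}|_{y_0}$ for every $0\ne v\in T_{Y,y_0}$, where $s$ is any generator of the line $L|_{y_0}$.

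The next step is an explicit computation. I would fix a basis $\{e_k^{p,q}\}_{k=1,\ldots,r_p}$ of each $F^{p,q}|_{y_0}$, so that up to scalar $s=\bigotimes_{p}(e_1^{p,q}\wedge\cdots\wedge e_{r_p}^{p,q})^{\otimes Np}$, and apply the Leibniz rule to compute $\theta(v)s$. Each term in the resulting sum is obtained by modifying one of the $r$ tensor positions of $s$ from bigrade $(p,q)$ to $(p-1,q+1)$ via $\eta(v)$. Since the Hodge decomposition of $F^{\otimes r}|_{y_0}$ refines into a direct sum indexed by sequences of bigradings on the $r$ tensor positions, modifications at distinct positions land in distinct summands of $E^{p_0-1,q_0+1}|_{y_0}$, so no cancellation among them is possible. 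Projecting $\theta(v)s$ onto the summand corresponding to the modification of one fixed internal position inside one fixed copy of $\det F^{p,q}$ returns, up to a nonzero scalar and the remaining unmodified tensor factors, the element
\[ D_v^{p,q}:=\sum_{k=1}^{r_p}(-1)^{k+1}\bigl(e_1^{p,q}\wedge\cdots\widehat{e_k^{p,q}}\cdots\wedge e_{r_p}^{p,q}\bigr)\otimes \eta(v)e_k^{p,q}\in \wedge^{r_p-1}F^{p,q}|_{y_0}\otimes F^{p-1,q+1}|_{y_0}. \]
Since the missing-wedges $e_1^{p,q}\wedge\cdots\widehat{e_k^{p,q}}\cdots\wedge e_{r_p}^{p,q}$ form a basis of $\wedge^{r_p-1}F^{p,q}|_{y_0}$, the vanishing $D_v^{p,q}=0$ forces $\eta(v)e_k^{p,q}=0$ for every $k$, i.e.\ $\eta(v)|_{F^{p,q}|_{y_0}}=0$. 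Hence $\theta(v)s=0$ would force $\eta(v)=0$, contradicting immersivity whenever $v\ne 0$.

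I expect the main obstacle to be the direct-sum observation above: carefully tracking how the Leibniz-rule terms distribute over the refined Hodge decomposition of $E^{p_0-1,q_0+1}|_{y_0}$, and verifying that modifications at different tensor positions—whether within a single slab $(\det F^{p,q})^{\otimes Np}$ or across different slabs—genuinely land in distinct summands. This is an indexing exercise but requires care because each slab is itself a tensor product of $Np$ copies of $\det F^{p,q}$, each embedded antisymmetrically in $(F^{p,q})^{\otimes r_p}$, so a given tensor position is specified by a triple (slab index $p$, copy index $c$, internal index $j$) and the derivation can act on any of the $\sum_p Npr_p$ such positions.
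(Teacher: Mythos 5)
Your proof is correct, but it follows a genuinely different route from the paper's. The paper's argument is analytic: it equips $L^{-1}\otimes E$ with the singular hermitian metric $h$ of \cref{singular metric} (locally bounded, vanishing on $D\cup \mathbf{B}_+(L-D)$), applies the Hodge-theoretic curvature identity together with Cauchy--Schwarz to estimate $\hess\log|s|_h^2$ from above by $\sqrt{-1}\{\tilde\theta_{p_0,q_0}(s),\tilde\theta_{p_0,q_0}(s)\}_h/|s|_h^2 - \sqrt{-1}\Theta_{L,h_L}$, and then evaluates at the interior maximum $y_0$ of $|s|_h^2$: there $\hess\log|s|_h^2\leq 0$ while $\sqrt{-1}\Theta_{L,h_L}>0$, forcing $\tilde\theta_{p_0,q_0}(s)$ to be injective at $y_0$. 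Your proof is algebraic: it exploits the explicit construction $E=F^{\otimes r}$, the determinantal embedding $\kL^{\otimes N}\hookrightarrow E^{p_0,q_0}$, and a direct Leibniz-rule computation at a point $y_0$ where the period map of $(F,\eta)$ is immersive. The two combinatorial observations you flag as the crux are correct and do close the argument: derivations at distinct tensor positions land in distinct summands of the refined bigrading of $E^{p_0-1,q_0+1}$ (since decrementing two different coordinates of the index tuple $(p_1,\dots,p_r)$ gives two different tuples), and within each summand the missing-wedge vectors are linearly independent in $\wedge^{r_p-1}F^{p,q}$, so $\theta(v)s=0$ forces $\eta(v)|_{F^{p,q}}=0$ for every $p$ with $r_p>0$ (and trivially for the remaining $p$), whence $\eta(v)=0$ and $v=0$. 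What the two approaches buy: yours is more elementary (no singular metrics, no curvature, no maximum principle), pins down precisely where $\tau_1$ is injective (any immersive point of the period map, a nonempty Zariski-open set), and is self-contained at that point. The paper's proof is less explicit about the locus but applies to \emph{any} system of log Hodge bundles satisfying the three abstract conditions of \cref{prop:new}, not just the one of the form $F^{\otimes r}$; this extra generality is what the paper records in \cref{thm:new} and invokes for the restriction to subvarieties in the proof of \cref{thmx:refined}. (In fact the systems arising there are pullbacks of the determinantal construction, so your argument would adapt there too, but the abstract statement is cleaner to transport.)
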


The proof is almost the same at that of  \cite[Theorem D]{Den18}. We provide it here for the sake of completeness.

\begin{proof}[Proof]
  The inclusion $   L\subset E^{p_0,q_0}$   induces a global section  $s\in H^0(Y, L^{-1}\otimes E^{p_0,q_0})$ by \Cref{cond:big}; this section is \emph{generically} non-vanishing over $U=Y- D $. Set
	\begin{align}\label{set}
	U_1:=\{y\in Y-(D\cup \mathbf{B}_+(L-D)) \mid s(y)\neq 0 \}, 
	\end{align}
	which is a non-empty Zariski open subset of $U$.  Since the Hodge metric $h_{\hod}$ is a direct sum of metrics $h_p$ on $E^{p,q}$, the metric $h$ for $L^{-1}\otimes E$ is a direct sum of metrics    $h_L^{-1}\cdot h_{p}$ on $L^{-1}\otimes E^{p,q}$, which is smooth over $U_0:=Y-(D\cup \mathbf{B}_+(L-D))$. Let   $D'$ be the $(1,0)$-part of its Chern connection over $U_1$ and $\Theta$ to be its curvature form. Then   over $U_0$, we   have
	\begin{align}\nonumber
	\Theta &=- \Theta_{L,h_L}\otimes \mathds{1}+\mathds{1}\otimes \Theta_{h_{p_0}}\left(E^{p_0,q_0}\right)\\\nonumber
	&=- \Theta_{L,h_L}\otimes \mathds{1}-\mathds{1}\otimes   \left({\theta}_{p_0,q_0}^*\wedge  {\theta}_{p_0,q_0}\right)-\mathds{1}\otimes   \left({\theta}_{p_0+1,q_0-1} \wedge  {\theta}_{p_0+1,q_0-1}^*\right)\\\label{eq:Hodge bundle2}
	&=- \Theta_{L,h_L}\otimes \mathds{1}-   \tilde{\theta}_{p_0,q_0}^*\wedge  \tilde{\theta}_{p_0,q_0} - \tilde{\theta}_{p_0+1,q_0-1} \wedge  \tilde{\theta}_{p_0+1,q_0-1}^*,  
	\end{align}
	where we set
        $$
        \theta_{p,q}=\theta|_{E^{p,q}}\colon E^{p,q}\lra E^{p-1,q+1}\otimes \Omega_{Y}^1(\log D)
        $$
	and
        $$
        \tilde{\theta}_{p,q}=\mathds{1}\otimes \theta_{p,q} \colon L^{-1}\otimes E^{p,q}\lra L^{-1}\otimes  E^{p-1,q+1}\otimes \Omega_{Y}^1(\log D)
	$$
        and define $\tilde{\theta}_{p,q}^*$ to be the adjoint of $\tilde{\theta}_{p,q}$ with respect to the metric $h_{L}^{-1}\cdot h$. Hence over $U_1$, one has
	\begin{align} \nonumber
	-\hess\log |s|_{h}^2&=  \frac{\left\{ \sqrt{-1}\Theta(s),s\right\}_{h}}{| s|^2_{h}}+\frac{\sqrt{-1}\{D's,s \}_{h}\wedge \{s,D's \}_{h}}{|s|_{h}^4}-\frac{\sqrt{-1}\{D's,D's \}_{h}}{|s|_{h}^2} \\\label{eq:crucial}
	& \leqslant   \frac{\left\{ \sqrt{-1}\Theta(s),s\right\}_{h}}{| s|^2_{h}}
	\end{align}	
	thanks to Cauchy--Schwarz inequality 
	$$\sqrt{-1}|s|^2_{h}\cdot \{D's,D's \}_{h}\geqslant \sqrt{-1}\{D's,s\}_{h}\wedge \{s,D's \}_{h}.$$
	Substituting \eqref{eq:Hodge bundle2} into \eqref{eq:crucial}, over $U_1$, one has 
	\begin{align} \nonumber
	\sqrt{-1}\Theta_{L,h_L}-\hess\log |s|_{h}^2 &\leqslant  -\frac{\left\{ \sqrt{-1}\tilde{\theta}_{p_0,q_0}^*\wedge \tilde{\theta}_{p_0,q_0}(s),s\right\}_{h}}{| s|^2_{h}}
        -\frac{\left\{ \sqrt{-1}\tilde{\theta}_{p_0+1,q_0-1}\wedge \tilde{\theta}^*_{p_0+1,q_0-1}(s),s\right\}_{h}}{| s|^2_{h}}\\\nonumber
	&= \frac{\sqrt{-1}\left\{ \tilde{\theta}_{p_0,q_0}(s),\tilde{\theta}_{p_0,q_0}(s)\right\}_{h}}{| s|^2_{h}} + \frac{\left\{   \tilde{\theta}^*_{p_0+1,q_0-1}(s), \tilde{\theta}^*_{p_0+1,q_0-1}(s)\right\}_{h}}{| s|^2_{h}}\\	\label{eq:final}
	&\leq \frac{\sqrt{-1}\left\{ \tilde{\theta}_{p_0,q_0}(s),\tilde{\theta}_{p_0,q_0}(s)\right\}_{h}}{| s|^2_{h}}, 
	\end{align}
	where $\tilde{\theta}_{p_0,q_0}(s)\in H^0\big(Y,L^{-1}\otimes  E^{p_0-1,q_0+1}\otimes \Omega_{Y}^1(\log D)\big)$. 
	By \Cref{bounded}, one has $|s|_h^2(y)=0$ for any $y\in D\cup \mathbf{B}_+(L-D)$. 
	Therefore,   there exists a  $y_0\in U_0$ so that $|s|^2_{h}(y_0)\geqslant |s|^2_{h}(y)$ for any $y \in U_0$. Hence $|s|^2_{h}(y_0)>0$, and by \eqref{set}, $y_0\in U_1$. Since $|s|^2_{h}$ is smooth over $U_0$,  
	$
\hess\log |s|_{h}^2 
	$ 
	is seminegative at $y_0$ by the maximal principle. By \Cref{estimate}, $	\sqrt{-1}\Theta_{L,h_L}$ is strictly positive at $y_0$. By \eqref{eq:final} and the relation $|s|_h^2(y_0)>0$, we conclude  that $\sqrt{-1}\big\{  \tilde{\theta}_{p_0,q_0}(s),\tilde{\theta}_{p_0,q_0}(s)\big\}_{h}$ is strictly positive at $y_0$. In particular, for any non-zero $\xi \in T_{Y,y_0}$, one has $\tilde{\theta}_{p_0,q_0}(s)(\xi)\neq 0$. For $k=1$, we write $\tau_k$ in \eqref{iterated Kodaira2} as
	$$
	\tau_1\colon T_Y (-\log D  )\lra L^{-1}\otimes E^{p_0-1,q_0+1}.
	$$
Then over $U$, it is defined by $\tau_1(\xi):=\tilde{\theta}_{p_0,q_0}(s)(\xi)$ and is thus \emph{injective at $y_0\in U_1$}. Hence $\tau_1$ is \emph{generically injective}. The theorem is thus proved.
\end{proof}

 \section{Construction of a negatively curved Finsler metric}  \label{sec:construction}
The aim of this technical section is to prove \cref{thmx:uniform} based on \cref{prop:new}. We first give the definition of a Finsler metric. 

\begin{dfn}[Finsler metric]\label{def:Finsler}
	Let \(E\) be a  holomorphic vector bundle on a complex manifold $X$. A \emph{Finsler metric} on \(E\) is a real non-negative  \emph{continuous}  function \(h\colon E\to  \mathclose[ 0,+\infty\mathopen[ \) such
	that 
	\[h(av) = |a|h(v)\]
	for any \(a\in \bC \) and \(v\in  E\).  	The metric $h$ is \emph{positive  definite} on a subset $U\subset X$ if  \(h(v)>0\) for any
	non-zero \(v\in E_{x} \) and any $x\in U$.
\end{dfn}

We  mention that our definition is a bit different from that in \cite[Section 2.3]{Kob98}, which requires \emph{convexity}, and the Finsler metric therein    can be upper-semicontinuous. 

Let $(E=\oplus_{p+q=\ell}E^{p,q},\theta)$ be a system of log Hodge   bundles on a compact K\"ahler log pair $(Y,D)$ satisfying the  two conditions in \cref{prop:new}.  We adopt the same notation as that in \cref{prop:new} and \Cref{sec:Torelli} throughout this  section. Let us denote by $n$ the largest non-negative number for $k$ so that $\tau_k$ in \eqref{iterated Kodaira2} is not trivial. By \cref{Deng}, $n>0$.   Following  \cite[Section 2.3]{Den18}, we construct Finsler metrics $F_1,\ldots,F_{n}$ on $T_Y(-\log D)$ as follows.  
By \eqref{iterated Kodaira2},  for each \(k=1,\ldots,n \), there exists a 
\begin{eqnarray*} 
\tau_k\colon {\Sym}^k  T_{Y}(-\log D) \lra  L^{-1}\otimes E^{p_0-k,q_0+k}.
\end{eqnarray*}
Then it follows from \Cref{bounded} that the (Finsler) metric \(h \) on \(  L^{-1}\otimes E^{p_0-k,q_0+k} \) induces  a Finsler metric \(F_k\) on \( T_Y(-\log D) \) defined as follows: for any \(e\in   T_{Y}(-\log D)_y \),
\begin{eqnarray}\label{k-Finsler}
F_k(e):=  h\left(\tau_k\left(e^{\otimes k}\right)\right)^{\frac{1}{k}}.
\end{eqnarray}
Let $C\subset \bC$ be any open subset of $\bC$.  For any holomorphic map \(\gamma:C\rightarrow U: =Y-D\), one has
\begin{align} \label{eq:differential}
d\gamma\colon T_{C}\lra \gamma^* T_U= \gamma^* T_{Y}(-\log D).
\end{align}
We	denote by \(\d_t:=\frac{\d}{\d t} \)  the canonical vector field  in \( C\subset \bC\),  and by  \(\bar{\d}_t:=\frac{\d}{\d \bar{t}}\) its conjugate.  
The Finsler metric \(F_k\) induces a continuous hermitian pseudo metric on \(C \), defined by
\begin{eqnarray}\label{seminorm}
\gamma^*F_k^2=\sqrt{-1}G_k(t) dt\wedge d\bar{t}.
\end{eqnarray}
Hence $G_k(t)=|\tau_k\big(d\gamma(\d_t)^{\otimes k}\big)|^{{2}/{k}}_{h}$,
where $\tau_k$ is defined in \eqref{iterated Kodaira2}.

By \cref{Deng}, there is a  Zariski open subset $U^\circ $ of $U$ such that $U^\circ \cap \mathbf{B}_+(L)=\varnothing$ and $\tau_1 $ is injective at any point of $U^\circ$.  We now fix any holomorphic map $\gamma\colon C\to U$ with  $\gamma(C)\cap U^\circ\neq \varnothing$.  By \Cref{bounded}, the metric $h$ for $L^{-1}\otimes E$ is smooth and positive definite over $U-\mathbf{B}_+(L)$. Hence $G_1(t)\not\equiv 0$.  Let $C^\circ $ be a (non-empty) open subset of $C$ whose  complement $C\setminus C^\circ$ is a \emph{discrete set} so that 
\begin{itemize}
	\item   $\gamma(C^\circ)\subset U^\circ$; 
	\item  for every $k=1,\ldots,n$,  either $G_k(t)\equiv 0$ on $C^\circ$, or $G_k(t)>0$ for every $t\in C^\circ$; 
	\item $\gamma'(t)\neq0$ for any $t\in C^\circ$; namely $\gamma|_{C^\circ}\colon C^\circ\to U^0$ is immersive everywhere. 
\end{itemize}
By the definition of $G_k(t)$, if $G_k(t)\equiv 0$ for some $k>1$, then $\tau_k(\d_t^{\otimes k})\equiv 0 $, where $\tau_k$ is defined in \eqref{iterated Kodaira2}. Note that one has $\tau_{k+1}(\d_t^{\otimes (k+1)})=\tilde{\theta}(\tau_k(\d_t^{\otimes k}))(\d_t)$,  where
$$
\tilde{\theta}=\mathds{1}_{L^{-1}}\otimes \theta\colon L^{-1}\otimes E\lra L^{-1}\otimes E\otimes \Omega_{Y}^1(\log D). 
$$
We thus conclude that $G_{k+1}(t)\equiv 0$. Hence there exists an $m$ with  $1\leq m\leq n$ so that the set $\{k\mid G_k(t)> 0$ $\mbox{over }C^\circ\}=\{1,\ldots,m\}$ and $G_{\ell}(t)\equiv 0$ for all $\ell=m+1,\ldots,n$.  From now on, \emph{all the computations} are made over $C^\circ$ if not specified.

Using the same computations as those in the proof of  \cite[Proposition 2.10]{Den18}, we have the following curvature formula.

\begin{thm}\label{thm:curvature}
	For $k=1,\ldots,m$, over $C^\circ$, one has
	\begin{align}  \label{eq:k=1}
	\frac{	\d^2\log G_1}{\d t\d \bar{t}}  	&\geq\Theta_{L,h_L}\left(\d_t,\bar{\d}_t\right) -  \frac{G_2^2}{G_1}    \quad  &\mbox{ if  }\ k=1, \\\label{eq:k>2}
	\frac{	\d^2\log G_k}{\d t\d \bar{t}}  	&\geq  \frac{1}{k}\left( \Theta_{L,h_L}\left(\d_t, \bar{\d}_t\right)+  \frac{G_k^k}{G^{k-1}_{k-1}}  - \frac{G^{k+1}_{k+1}}{G^k_k}    \right)  \quad  &\mbox{ if  }\ k>1. 
	\end{align} 
	Here we make the convention that $G_{m+1}\equiv 0$ and $\frac{0}{0}=0$. We  also write $\d_t$ $($resp.\ $\bar{\d}_t)$ for $d\gamma(\d_t)$ $($resp.\ ${d\gamma(\bar{\d}_t)})$ abusively, where $d\gamma$ is defined in \eqref{eq:differential}.   
\end{thm}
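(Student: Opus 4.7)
The approach is to compute the Chern curvature of each vector bundle $L^{-1}\otimes E^{p_0-k, q_0+k}$ pulled back to $C^\circ$ via $\gamma$, evaluate it on the canonical section $s_k := \tau_k(d\gamma(\partial_t)^{\otimes k})$, and derive the desired lower bound on $\frac{\partial^2 \log |s_k|_h^2}{\partial t\,\partial\bar t}$ by two applications of Cauchy--Schwarz. Since $\log G_k = \frac{1}{k}\log |s_k|_h^2$ by definition, the claimed formulas follow by division by $k$. The starting point is the standard Griffiths--Demailly inequality
\[
\hess \log |s_k|_h^2 \;\geq\; -\frac{\{\sqrt{-1}\Theta_h(s_k),\, s_k\}_h}{|s_k|_h^2},
\]
valid wherever $|s_k|_h^2 > 0$, exactly as used in the proof of \cref{Deng}; on $C^\circ$ positivity is guaranteed by the defining condition $G_k(t) > 0$ for every $k = 1,\ldots,m$.

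Decomposing the curvature of $L^{-1}\otimes E^{p_0-k, q_0+k}$ with respect to the metric $h = h_L^{-1}\otimes h_{\rm hod}$ writes $\sqrt{-1}\Theta_h$ as the sum of the scalar curvature $-\sqrt{-1}\Theta_{h_L}(L)$ from the $L^{-1}$ factor and the two Hodge--Griffiths pieces $-\sqrt{-1}\,\tilde\theta^*_{p_0-k, q_0+k}\wedge\tilde\theta_{p_0-k, q_0+k}$ and $-\sqrt{-1}\,\tilde\theta_{p_0-k+1, q_0+k-1}\wedge\tilde\theta^*_{p_0-k+1, q_0+k-1}$ coming from the curvature of the Hodge summand. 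After pulling back to $C^\circ$ and contracting with $(\partial_t,\bar\partial_t)$, the identity $\tilde\theta_{p_0-k, q_0+k}(\partial_t)(s_k) = s_{k+1}$ encoded in the iteration \eqref{iterate} turns the first Higgs contribution into $-|s_{k+1}|_h^2/|s_k|_h^2 = -G_{k+1}^{k+1}/G_k^k$, which collapses to $0$ when $k = m$ by the convention $G_{m+1}\equiv 0$.

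The decisive step is bounding the remaining term $|\tilde\theta^*_{p_0-k+1, q_0+k-1}(\bar\partial_t)(s_k)|_h^2/|s_k|_h^2$ from below when $k \geq 2$. I would exploit the section $s_{k-1}$ produced at the previous level of iteration together with adjunction:
\[
\bigl\langle \tilde\theta^*_{p_0-k+1, q_0+k-1}(\bar\partial_t)(s_k),\, s_{k-1}\bigr\rangle_h \;=\; \bigl\langle s_k,\, \tilde\theta_{p_0-k+1, q_0+k-1}(\partial_t)(s_{k-1})\bigr\rangle_h \;=\; \langle s_k, s_k\rangle_h \;=\; |s_k|_h^2.
\]
Cauchy--Schwarz then gives $|\tilde\theta^*_{p_0-k+1, q_0+k-1}(\bar\partial_t)(s_k)|_h^2 \cdot |s_{k-1}|_h^2 \geq |s_k|_h^4$, so this term is bounded below by $|s_k|_h^2/|s_{k-1}|_h^2 = G_k^k/G_{k-1}^{k-1}$, which is exactly the extra positive contribution appearing in \eqref{eq:k>2}. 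For $k = 1$ one simply discards this nonnegative term to obtain \eqref{eq:k=1}. The only real subtlety is this clever use of $s_{k-1}$ as test vector; everything else is a routine curvature computation in the style of \cref{Deng}, with bookkeeping of the sign conventions for the pairing $\{\cdot,\cdot\}_h$ being the only point that requires genuine care.
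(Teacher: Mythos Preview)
Your proposal is correct and reconstructs exactly the argument the paper has in mind: the paper does not prove \cref{thm:curvature} directly but refers to \cite[Proposition 2.10]{Den18}, whose proof proceeds precisely via the Griffiths--Demailly inequality for $\log|s_k|_h^2$, the curvature decomposition of $L^{-1}\otimes E^{p_0-k,q_0+k}$ into the $\Theta_{L,h_L}$ term plus the two Higgs pieces, the identity $\tilde\theta(\partial_t)(s_{k-1})=s_k$, and the Cauchy--Schwarz trick with $s_{k-1}$ as test vector to extract the $G_k^k/G_{k-1}^{k-1}$ contribution. Your sketch is faithful to that computation, including the handling of the $k=1$ case by discarding the nonnegative $\tilde\theta^*$ term.
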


Let us mention that  in \cite[Equation (2.2.11)]{Den18}, we dropped the term 
$ \Theta_{L,h_L}(\d_t, \bar{\d}_t)$ in \eqref{eq:k>2}, though it can be easily seen from the proof of \cite[Lemma 2.7]{Den18}.  

We will follow  ideas in  \cite[Section 2.3]{Den18} (inspired  by \cite{TY14,BPW17,Sch17})  to  introduce a new Finsler metric $F$ on $T_Y(-\log D)$ by taking a convex sum of the form
\begin{align}
F:=\sqrt{\sum_{k=1}^{n}k\alpha_kF_k^2}, 
\end{align} 
where \(\alpha_1,\ldots,\alpha_n\in \mathbb{R}^+ \) are some constants  which will be fixed later.

For the above, for  \(\gamma\colon C\rightarrow  U \) with $\gamma(C)\cap U^\circ\neq \varnothing$, we write 
$$
\gamma^*F^2=\sqrt{-1}H(t)dt\wedge d\bar{t}.
$$
Then 
\begin{align}\label{eq:H}
H(t)=\sum_{k=1}^{n}  {{k\alpha_k}G_k}(t), 
\end{align} where \(G_k\) is defined in \eqref{seminorm}.  Recall that for $k=1,\ldots,m$,  $G_k(t)> 0$ for any $t\in C^\circ$.  

We first recall a computational lemma by Schumacher.

\begin{lem}[\!\protecting{\cite[Lemma 17]{Sch17}}]
  Let \(\alpha_j\) and $G_j$ be positive real numbers for \(j=1,\ldots,n \). Then 
	\begin{equation}\label{final}
          \sum_{j=2}^{n}\left(\alpha_{j}\frac{G_j^{j+1}}{G_{j-1}^{j-1}}-\alpha_{j-1}\frac{G_j^{j}}{G_{j-1}^{j-2}} \right)
          \geqslant \frac{1}{2}\left(-\frac{\alpha_1^3}{\alpha_2^2}G_1^2+\frac{\alpha_{n-1}^{n-1}}{\alpha_n^{n-2}}G_n^2+\sum_{j=2}^{n-1}\left(\frac{\alpha_{j-1}^{j-1}}{\alpha_j^{j-2}} -\frac{\alpha_{j}^{j+2}}{\alpha_{j+1}^{j+1}}\right)G_j^2 \right)
	\end{equation}   
\end{lem}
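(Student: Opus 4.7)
The plan is to establish a sharper term-by-term inequality for each $j$ in the LHS sum and then perform a simple reindexing to match the RHS of \eqref{final}.

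Concretely, I would first prove that for each $j \in \{2, \ldots, n\}$,
\begin{equation} \label{eq:tbt}
\alpha_j \frac{G_j^{j+1}}{G_{j-1}^{j-1}} - \alpha_{j-1} \frac{G_j^j}{G_{j-1}^{j-2}} \geq \frac{1}{2}\left(\frac{\alpha_{j-1}^{j-1}}{\alpha_j^{j-2}} G_j^2 - \frac{\alpha_{j-1}^{j+1}}{\alpha_j^j} G_{j-1}^2\right).
\end{equation}
Setting $x := G_j/G_{j-1} > 0$ and $y := \alpha_{j-1}/\alpha_j > 0$ and dividing by $\alpha_j G_{j-1}^2$, the inequality \eqref{eq:tbt} reduces to the polynomial statement $x^{j+1} + \tfrac{1}{2} y^{j+1} \geq yx^j + \tfrac{1}{2} y^{j-1} x^2$. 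Grouping the terms gives $x^j(x-y) - \tfrac{1}{2}y^{j-1}(x-y)(x+y)$, and using the identity $x^j - y^j = (x-y)\sum_{k=0}^{j-1} y^k x^{j-1-k}$ a second factor of $(x-y)$ drops out, yielding
\[(x - y)^2 \left( \tfrac{1}{2} y^{j-1} + \sum_{k=0}^{j-2} y^k x^{j-1-k} \right) \geq 0,\]
which is manifestly non-negative. This proves \eqref{eq:tbt}.

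To conclude, I would sum \eqref{eq:tbt} over $j = 2, \ldots, n$ and reindex the $G_{j-1}^2$-sum by $k = j - 1$, so that $k$ runs from $1$ to $n-1$. The coefficient of $G_j^2$ for $2 \leq j \leq n-1$ then combines into $\tfrac{1}{2}\bigl(\tfrac{\alpha_{j-1}^{j-1}}{\alpha_j^{j-2}} - \tfrac{\alpha_j^{j+2}}{\alpha_{j+1}^{j+1}}\bigr)$, while $G_1^2$ picks up only $-\tfrac{1}{2}\tfrac{\alpha_1^3}{\alpha_2^2}$ and $G_n^2$ only $\tfrac{1}{2}\tfrac{\alpha_{n-1}^{n-1}}{\alpha_n^{n-2}}$, exactly reproducing the RHS of \eqref{final}. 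The main obstacle is identifying the correct splitting in \eqref{eq:tbt}, since the positive and negative contributions to the target RHS are distributed across \emph{different} summands of the LHS and the pattern only becomes visible after the reindexing; once the right allocation is guessed, the remaining polynomial identity is elementary and requires no clever weighted AM--GM application.
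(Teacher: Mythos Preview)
Your argument is correct: the term-by-term inequality \eqref{eq:tbt} reduces, after your substitution, to the factorization
\[
x^{j+1}+\tfrac12 y^{j+1}-yx^{j}-\tfrac12 y^{j-1}x^{2}
=(x-y)^{2}\Big(\tfrac12 y^{j-1}+\sum_{k=0}^{j-2} y^{k}x^{j-1-k}\Big)\geq 0,
\]
and the reindexing you describe assembles these into exactly the right-hand side of \eqref{final}.

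As for comparison: the paper does not give its own proof of this lemma --- it is quoted from \cite[Lemma~17]{Sch17} and closed with a \qed. So there is no in-paper argument to compare your approach against; you have supplied a clean self-contained proof where the paper only cites the result.
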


Now we are ready to compute the curvature of the Finsler metric $F$ based on \cref{thm:curvature}.

\begin{thm}\label{curvature estimate}  Fix a smooth K\"ahler metric $\omega$ on $Y$.	There exist  \emph{universal} constants \(0<\alpha_1<\ldots<\alpha_n\) and \(\delta>0\) such that for any holomorphic map $\gamma\colon C\to U=Y-D$ with $C$ an open subset of\, $\bC$ and $\gamma(C)\cap U^\circ\neq \varnothing$, one has
	\begin{align}\label{eq:estimatefinal}
	\hess \log|\gamma'(t)|_{F}^2\geq \delta\gamma^*\omega.
	\end{align} 
\end{thm}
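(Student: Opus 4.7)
The plan is to compute $\sqrt{-1}\partial\bar\partial\log H$, where $H(t)$ is the scalar defined by $\gamma^*F^2 = \sqrt{-1}H(t)\,dt\wedge d\bar t$ as in \eqref{eq:H}, and to choose the weights $\alpha_k$ so that the positive curvature contributions dominate the negative ones. Since $C\setminus C^\circ$ is discrete and both sides of \eqref{eq:estimatefinal} are continuous $(1,1)$-currents, it suffices to prove the estimate on $C^\circ$, where $G_1,\ldots,G_m>0$ and $G_{m+1}=\cdots=G_n\equiv 0$.

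The first step is the standard Cauchy--Schwarz (concavity-of-log) inequality applied to $H=\sum_{k=1}^m k\alpha_k G_k$:
\[
\partial_t\bar\partial_t\log H \;\geq\; \sum_{k=1}^m \tfrac{k\alpha_k G_k}{H}\,\partial_t\bar\partial_t\log G_k.
\]
Next I substitute the curvature lower bounds \eqref{eq:k=1}--\eqref{eq:k>2} from \cref{thm:curvature}: the $\Theta_{L,h_L}$ contributions pool into $\tfrac{\gamma^*\Theta_{L,h_L}}{H}\sum_{k=1}^m \alpha_k G_k$, while the cross terms assemble, after relabelling $k\mapsto k+1$ in the subtracted sum, into the telescoping expression $\tfrac{1}{H}\sum_{k=2}^m\big(\alpha_k\tfrac{G_k^{k+1}}{G_{k-1}^{k-1}} - \alpha_{k-1}\tfrac{G_k^k}{G_{k-1}^{k-2}}\big)$, which is exactly Schumacher's sum. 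Applying \eqref{final} (with upper index $m$ in place of $n$; the vanishing tail $G_{m+1}=0$ does no harm) yields
\[
H\,\partial_t\bar\partial_t\log H \;\geq\; \gamma^*\Theta_{L,h_L}\sum_{k=1}^m \alpha_k G_k \;-\;\tfrac{\alpha_1^3}{2\alpha_2^2}G_1^2 \;+\;\tfrac{1}{2}\Big(\tfrac{\alpha_{m-1}^{m-1}}{\alpha_m^{m-2}}G_m^2 + \sum_{k=2}^{m-1}\bigl(\tfrac{\alpha_{k-1}^{k-1}}{\alpha_k^{k-2}}-\tfrac{\alpha_k^{k+2}}{\alpha_{k+1}^{k+1}}\bigr)G_k^2\Big).
\]
I will choose $\alpha_1<\alpha_2<\cdots<\alpha_n$ growing fast enough (super-geometrically) so that each Schumacher coefficient $\tfrac{\alpha_{k-1}^{k-1}}{\alpha_k^{k-2}}-\tfrac{\alpha_k^{k+2}}{\alpha_{k+1}^{k+1}}$ is non-negative, discarding these terms along with the $G_m^2$ contribution.

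The remaining obstruction is the single negative term $-\tfrac{\alpha_1^3}{2\alpha_2^2}G_1^2$. To absorb it, the crucial input is the uniform comparison
\[
G_1(t)\;\leq\; C_0\,\gamma^*\Theta_{L,h_L}(\partial_t,\bar\partial_t) \qquad\text{on }C^\circ,
\]
which follows by combining the Simpson--Mochizuki bound $|\theta|_{h,\omega_U}\leq C$ from \cref{thm:norm}, the boundedness of $|s|_h$ on $Y$ given by \cref{bounded}, the identity $\omega_U=T|_U$, and the curvature inequality $\Theta_{L,h_L}\geq T$ from \cref{estimate}. Now fix any $\delta\in(0,1/n)$, so that $1-\delta k>0$ for $k=1,\ldots,n$, and enlarge $\alpha_2$ (with $\alpha_3,\ldots,\alpha_n$ enlarged accordingly to preserve the Schumacher conditions) so that $\alpha_2^{2}\geq\tfrac{C_0\alpha_1^{2}}{2(1-\delta)}$. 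Using $H\geq\alpha_1 G_1$ and the uniform bound on $G_1$, the chain
\[
\gamma^*\Theta_{L,h_L}\cdot(1-\delta)\alpha_1 G_1 \;\geq\; \tfrac{\alpha_1^{2}C_0}{2\alpha_2^{2}}\cdot \alpha_1 \gamma^*\Theta_{L,h_L}\cdot G_1 \;\geq\; \tfrac{\alpha_1^3 G_1^{2}}{2\alpha_2^{2}}
\]
rearranges to $H\,\partial_t\bar\partial_t\log H \;\geq\; \delta\,H\,\gamma^*\Theta_{L,h_L}$. Since $\Theta_{L,h_L}\geq T\geq c\omega$ on $Y$ for some $c>0$, dividing by $H>0$ yields $\hess\log|\gamma'(t)|_F^{2}\geq \delta c\,\gamma^*\omega$ on $C^\circ$, and extends to $C$ by continuity.

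The main technical hurdle is the universal bound $G_1 \lesssim \gamma^*\Theta_{L,h_L}$: this is precisely where the Hodge-theoretic boundedness of the Higgs field (\cref{thm:norm}) enters, and it is what allows the constants $\alpha_k$ and $\delta$ to be chosen independently of $\gamma$. Once this comparison is in hand, the rest reduces to the careful but routine algebra of Schumacher's lemma combined with a super-geometric choice of weights in which $\alpha_2$ is made sufficiently large relative to $\alpha_1$ to swallow the only residual negative term.
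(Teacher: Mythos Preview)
Your argument is correct and follows essentially the same line as the paper's: log-concavity of the weighted sum, substitution of the curvature formulas from \cref{thm:curvature}, Schumacher's algebraic lemma, a universal comparison $G_1\lesssim \gamma^*\Theta_{L,h_L}$ to absorb the single negative $G_1^2$ term, and then an inductive choice of the $\alpha_k$. The one place where you differ is in justifying that universal comparison: you invoke the Simpson--Mochizuki operator-norm bound $|\theta|_{h_{\rm hod},\omega_U}\le C$ together with the global boundedness of $|s|_h$ from \cref{bounded}, whereas the paper proves the same inequality (its Claim inside the proof) by a direct asymptotic comparison of $\tau_1^*h$ against the Poincar\'e-type current $T$ near $D$; both arguments are valid and yield a constant independent of $\gamma$.
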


\begin{proof}
	By \cref{Deng} and the assumption that $\gamma(C)\cap U^\circ\neq \varnothing$, we have $G_1(t)\not\equiv0$. 

We first recall a  result in   \cite[Lemma 2.9]{Den18};  we write its proof here as it is crucial in what follows.

	\begin{claim}\label{claim2}
		There is a universal constant $c_0>0$ $($\textit{i.e.}, it does not depend on $\gamma)$ so that $ \Theta_{L,h_L}(\d_t,\bar{\d}_t)\geq c_0G_1(t)$ for all $t\in C$.
	\end{claim}

        \begin{proof}[Proof of \Cref{claim2}]
		Indeed, by \Cref{estimate}, it suffices to prove that  
		\begin{eqnarray}\label{strict positive}
		\frac{|\d_t|^2_{\gamma^*(T)}}{\left|  \tau_1(d\gamma(\d_t))\right|_{h}^2}\geqslant c_0
		\end{eqnarray}
		for some $c_0>0$, where \(T\) is a K\"ahler current on $Y$, which is a smooth complete metric over $Y-D$ of Poincar\'e type.  It can be seen as a singular hermitian metric for $T_Y(-\log D)$. Hence for any admissible coordinate $(\cU;z_1,\ldots,z_n)$, one has
		$$
		\left|z_i\frac{\d}{\d z_i}\right|_T\sim (-\log |z_i|)^{-1}.
		$$ 
		On the other hand, by \cref{bounded}, one has
		$$
		\left|\tau_1\left(z_i\frac{\d}{\d z_i}\right)\right|_{h}\lesssim C\cdot \prod_{i=1}^{\ell}|z_i|^{\ep}
		$$
		for some constant $\ep>0$. Hence one has
		$\tau_1^*h\lesssim T$.
                Since \(Y\) is compact,    there exists a constant  \(c_0>0\)  such that
		 $
		T\geqslant c_0\tau_1^*h
		 $.
		Therefore,
		\[
		\frac{|\d_t|^2_{\gamma^*T}}{\left|  \tau_1(d\gamma(\d_t))\right|_{h}^2}=\frac{|\d_t|^2_{\gamma^*T}}{|\d_t|^2_{\gamma^*\tau_1^*h}}\geq c_0.
		\] 
	 	Hence  \eqref{strict positive} holds for any $\gamma\colon C\to U$ with $\gamma(C)\cap U^\circ\neq \varnothing$. The claim is proved.
	\end{proof} 
		
	By \cite[Lemma 8]{Sch12}, 
	\begin{eqnarray}\label{calculus}
	\sqrt{-1}\d\bar{\d}\log \left(\sum_{j=1}^{n}{j\alpha_j}G_j \right)\geqslant \frac{\sum_{j=1}^{n}j\alpha_jG_j\sqrt{-1}\d\bar{\d}\log G_j}{\sum_{i=1}^{n}j\alpha_jG_i}.
	\end{eqnarray}
	Substituting \eqref{eq:k=1} and \eqref{eq:k>2} into \eqref{calculus}, and observing the convention that $\frac{0}{0}=0$, we obtain
	\begin{align*}\nonumber
	\frac{\d^2\log H(t)}{\d t\d \bar{t}}&\geq  \frac{1}{H}\left(-\alpha_1G_2^2+ \sum_{k=2}^{n}\alpha_k\left(\frac{G_k^{k+1}}{G^{k-1}_{k-1}}  - \frac{G^{k+1}_{k+1}}{G^{k-1}_{k}}   \right)\right)+\frac{\sum_{k=1}^{n}\alpha_kG_k}{H} \Theta_{L,h_L}(\d_t,\bar{\d}_{t})\\
	&= \frac{1}{H}\left( \sum_{j=2}^{n}\left(\alpha_j\frac{G_j^{j+1}}{G_{j-1}^{j-1}}-\alpha_{j-1}\frac{G_j^{j}}{G_{j-1}^{j-2}} \right) \right)+\frac{\sum_{k=1}^{n}\alpha_kG_k}{H} \Theta_{L,h_L}(\d_t,\bar{\d}_{t})\\\nonumber
	&\stackrel{\eqref{final}}{\geq}   \frac{1}{H}\left(-\frac{1}{2}\frac{\alpha_1^3}{\alpha_2^2}G_1^2+\frac{1}{2}\sum_{j=2}^{n-1}\left(\frac{\alpha_{j-1}^{j-1}}{\alpha_{j}^{j-2}}-\frac{\alpha_j^{j+2}}{\alpha_{j+1}^{j+1}} \right)G_j^2+\frac{1}{2}\frac{\alpha_{n-1}^{n-1}}{\alpha_n^{n-2}}G_n^2  \right) \\\nonumber
	&\hphantom{\geq +}\;+\frac{\sum_{k=1}^{n}\alpha_kG_k}{H} \Theta_{L,h_L}(\d_t,\bar{\d}_{t})\\
	&\stackrel{\Cref{claim2}}{\geq} \frac{1}{H}\left(\frac{\alpha_1}{2}\left(c_0-\frac{\alpha_1^2}{\alpha_2^2}\right)G_1^2+\frac{1}{2}\sum_{j=2}^{n-1}\left(\frac{\alpha_{j-1}^{j-1}}{\alpha_{j}^{j-2}}-\frac{\alpha_j^{j+2}}{\alpha_{j+1}^{j+1}} \right)G_j^2+\frac{1}{2}\frac{\alpha_{n-1}^{n-1}}{\alpha_n^{n-2}}G_n^2  \right) \\\nonumber
	&\hphantom{\geq Clan +}\;+\frac{1}{H}\left(\frac{1}{2}\alpha_1G_1+\sum_{k=2}^{n}\alpha_kG_k\right) \Theta_{L,h_L}(\d_t,\bar{\d}_{t}).
	\end{align*}
	One can take \(\alpha_1=1 \) and choose the further \(\alpha_j>\alpha_{j-1} \) inductively   so that 
	\begin{align}\label{eq:alpha}
	c_0-\frac{\alpha_1^2}{\alpha_2^2}>0, \quad \frac{\alpha_{j-1}^{j-1}}{\alpha_{j}^{j-2}}-\frac{\alpha_j^{j+2}}{\alpha_{j+1}^{j+1}} >0 \quad \forall \ j=2,\ldots,n-1. 
	\end{align}
	Hence 
	$$
	\frac{\d^2\log H(t)}{\d t\d \bar{t}}\geq   \frac{1}{H}\left(\frac{1}{2}\alpha_1G_1+\sum_{k=2}^{n}\alpha_kG_k\right) \Theta_{L,h_L}(\d_t,\bar{\d}_{t}) \stackrel{\eqref{eq:H}}{\geq}\frac{1}{n}\Theta_{L,h_L}(\d_t,\bar{\d}_{t}) 
	$$ 
	over $C^\circ$. 
	By \Cref{estimate}, this implies that 
	\begin{align}\label{eq:curvature esti}
	\hess \log |\gamma'|_{F}^2=  \hess \log H(t)\geq \frac{1}{n}\gamma^*\sn\Theta_{L,h_L}\geq \delta\gamma^*\omega
	\end{align}
	over $C^\circ$ for some positive constant $\delta$ which does not depend on $\gamma$.  Since $|\gamma'(t)|_F^2$ is continuous and locally bounded from above over   $C$,  by the extension theorem of  subharmonic function, \eqref{eq:curvature esti}  holds over the whole~$C$.   Since $c_0>0$ is a constant which does not depend on $\gamma$, so are $\alpha_1,\ldots,\alpha_n$ by  \eqref{eq:alpha}.  The theorem is thus proved.
\end{proof}

As a summary of the results in this subsection, we obtain the following theorem.

\begin{thm}\label{thm:uniform}
Let $(E=\oplus_{p+q=\ell}E^{p,q},\theta)$ be a system of log Hodge  bundles on a compact K\"ahler log pair $(Y,D)$ satisfying the  two conditions in \cref{prop:new}.  Then there are a Finsler metric $h$ on $T_Y(-\log D)$ which is \emph{positive definite} on a dense Zariski open subset $U^\circ$ of\, $U:=Y- D$ and a smooth K\"ahler form $\omega$ on $Y$ such that   for any holomorphic map $\gamma\colon C\to U$  from any open subset $C$ of\, $ \bC$ with $\gamma(C)\cap U^\circ\neq \varnothing$, one has
	\begin{align} \label{eq:desired}
	\hess \log |\gamma'|_h^2\geq \gamma^*\omega. 
	\end{align} 
\end{thm}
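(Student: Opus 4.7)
The plan is to take $h$ to be the convex combination Finsler metric
$$F := \sqrt{\sum_{k=1}^n k\alpha_k F_k^2}$$
on $T_Y(-\log D)$, where the $F_k$ are defined in \eqref{k-Finsler} by pulling back the singular Hermitian metric on $L^{-1}\otimes E^{p_0-k,q_0+k}$ along the iterated Kodaira--Spencer maps $\tau_k$ of \eqref{iterated Kodaira2}, and the positive weights $\alpha_1 < \cdots < \alpha_n$ are the universal constants already fixed in \cref{curvature estimate}. Under this choice, the theorem becomes essentially a repackaging of \cref{curvature estimate}.

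For the positive-definiteness assertion, I would set $U^\circ$ to be the complement inside $Y - D$ of $\mathbf{B}_+(L\otimes \cO_Y(-D))$ together with the closed locus where $\tau_1$ fails to be injective. This set is a nonempty dense Zariski open subset by the infinitesimal Torelli-type property \cref{Deng}, which provides generic injectivity of $\tau_1$, combined with the bigness of $L\otimes \cO_Y(-D)$ from \cref{cond:big}. By \cref{bounded}, the Hermitian metric $h$ on $L^{-1}\otimes E^{p_0-1,q_0+1}$ is smooth and positively definite over $U^\circ$; since $\tau_1$ is also injective there, $F_1$, and hence the convex combination $F$, is positively definite on $U^\circ$.

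For the curvature inequality, given any holomorphic map $\gamma: C \to U$ with $\gamma(C) \cap U^\circ \neq \varnothing$, \cref{curvature estimate} directly yields $\hess \log |\gamma'|_F^2 \geq \delta\, \gamma^*\omega_0$ on the open dense subset $C^\circ \subset C$ constructed in the discussion preceding \cref{thm:curvature}, where $\gamma$ is immersive and takes values in $U^\circ$; here $\omega_0$ is a fixed background K\"ahler form on $Y$ and $\delta > 0$ is a universal constant independent of $\gamma$. Taking $\omega := \delta\, \omega_0$ then produces \eqref{eq:desired} on $C^\circ$. The only step beyond this repackaging, and the one I expect to be the main (albeit mild) obstacle, is extending the distributional inequality across $C \setminus C^\circ$. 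Since $C \setminus C^\circ$ is a discrete subset of $C$ and $\log|\gamma'|_F^2$ is locally bounded above on $C$ by continuity of $F$ and smoothness of $d\gamma: T_C \to \gamma^* T_Y(-\log D)$, the standard extension theorem for subharmonic functions propagates the inequality to all of $C$, completing the argument.
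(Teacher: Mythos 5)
Your proposal matches the paper's approach exactly: the theorem is stated with a \qed as a summary of the construction of $F=\sqrt{\sum_k k\alpha_k F_k^2}$, the generic injectivity of $\tau_1$ from \cref{Deng}, and the curvature estimate in \cref{curvature estimate}, followed by absorbing the constant $\delta$ into $\omega$. The one small thing to note is that the distributional extension across the discrete set $C\setminus C^\circ$ is already carried out inside the proof of \cref{curvature estimate}, so \eqref{eq:curvature esti} holds on all of $C$ as stated there and your additional extension step is not needed (though it correctly restates that argument).
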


\begin{proof} [Proof of \cref{thmx:uniform}]
	\cref{prop:new} together with \cref{thm:uniform} imply \cref{thmx:uniform}. 
	\end{proof}

\section{Big Picard theorem and algebraic hyperbolicity}

\subsection{Algebraic  and Picard hyperbolicity}
In \cref{def:notion}, we have seen the definition of  \emph{algebraic hyperbolicity} for a compact complex manifold $X$, which was   introduced by Demailly in \cite[Definition 2.2]{Dem97}. He proved in \cite[Theorem 2.1]{Dem97} that $X$ is algebraically hyperbolic if it is  Kobayashi hyperbolic.  The notion of algebraic hyperbolicity was generalized to log pairs   by Chen \cite{Che04}.
\begin{dfn}[Algebraic hyperbolicity]\label{def:DC}
	Let $(X,D)$ be a compact K\"ahler log pair.  For  any reduced irreducible curve $C\subset X$ such that $C\not\subset D$,  we denote by $i_X(C,D)$ the number of distinct
	points in the set $\nu^{-1}(D)$, where $\nu\colon \tilde{C}\to C$
	is  the normalization of $C$.  The log pair $(X,D)$ is \emph{algebraically hyperbolic}  if there is a smooth K\"ahler metric $\omega$ on $X$ such that
	$$
	2g(\tilde{C})-2+i(C,D)\geq {\deg}_{\omega}C:=\int_C\omega
	$$
	for all curves $C\subset X$ as above.
\end{dfn} 

Note that   $2g(\tilde{C})-2+i(C,D)$    depends only on the complement $C-D$.
Hence the above notion of hyperbolicity also makes sense for quasi-projective manifolds: we say that a quasi-projective manifold $U$ is algebraically hyperbolic if it has a log compactification $(X,D)$  which is  algebraically hyperbolic. 

However, unlike Demailly's theorem, it is unclear to us that Kobayashi hyperbolicity or Picard hyperbolicity of $X-D$ will imply algebraic hyperbolicity of $(X,D)$. In \cite{PR07}, Pacienza--Rousseau  proved that if $X-D$ is hyperbolically embedded into $X$, the log pair $(X,D)$ (and thus $X-D$) is algebraically hyperbolic.

Before we prove that  \cref{def:Picard} does not depend on the compactification of $U$, we will need  the following proposition,  which is   a consequence of the deep extension theorem of meromorphic maps by Siu \cite{Siu75}. The \emph{meromorphic map} in this paper is defined in the sense of Remmert, and we refer the reader to  \cite[p. 243]{KG02} for the precise definition.

\begin{proposition} \label{extension theorem}
	Let $Y^\circ$ be a Zariski open subset of a compact K\"ahler manifold $Y$. Assume that $Y^\circ$ is Picard hyperbolic. Then any   holomorphic map $f\colon\Delta^p\times (\Delta^*)^q\to Y^\circ$ extends to a meromorphic map $\overline{f}\colon\Delta^{p+q}\dashrightarrow Y$.  In particular, any holomorphic map $g$ from a Zariski open subset $X^\circ$ of a compact complex manifold $X$ to $Y^\circ$ extends to a meromorphic map from $X$ to $Y$.
\end{proposition}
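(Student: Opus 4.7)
The plan is to prove the first assertion and then derive the second from it by a local-to-global argument, with Siu's extension theorem for meromorphic maps \cite{Siu75} as the key technical input.

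For the first assertion, I would form the graph $\Gamma_f \subset (\Delta^p \times (\Delta^*)^q) \times Y^\circ$ and consider its topological closure $\overline{\Gamma}_f$ inside the compact-fibered space $\Delta^{p+q} \times Y$. Showing that $\overline{\Gamma}_f$ is an analytic subset of $\Delta^{p+q} \times Y$ would immediately produce the desired meromorphic extension, since the first projection $\overline{\Gamma}_f \to \Delta^{p+q}$ is proper (as $Y$ is compact) and is biholomorphic over the open part $\Delta^p \times (\Delta^*)^q$, so Remmert's theorem gives the meromorphic map $\overline{f}$. The essential use of Picard hyperbolicity enters as follows: for any point $w_0$ in the boundary locus $\Delta^{p+q} \setminus (\Delta^p \times (\Delta^*)^q)$ and any holomorphic disc $\gamma : (\Delta, 0) \to (\Delta^{p+q}, w_0)$ with $\gamma(\Delta^*) \subset \Delta^p \times (\Delta^*)^q$, the composition $f \circ \gamma : \Delta^* \to Y^\circ$ extends holomorphically across the origin to a map $\Delta \to Y$ by Definition \ref{def:Picard}. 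This provides the pointwise control along boundary-transverse discs needed to invoke Siu's meromorphic extension theorem, which upgrades the set-theoretic limit to a genuine analytic subvariety.

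For the second assertion I would argue locally around each $x \in X \setminus X^\circ$. After passing to a log resolution (which does not affect whether the map extends meromorphically, by uniqueness of extension), one may assume $X \setminus X^\circ$ is a simple normal crossing divisor, and choose an admissible coordinate chart in the sense of Definition \ref{def:admissible} identifying a neighborhood of $x$ with $\Delta^{p+q}$ so that $X^\circ$ locally corresponds to $\Delta^p \times (\Delta^*)^q$. The first assertion then produces a local meromorphic extension of $g$ in each such chart, and these local extensions agree on overlaps by the uniqueness of meromorphic extension, so they glue to the desired meromorphic map $X \dashrightarrow Y$.

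The principal difficulty is the Siu extension step itself: the complement of $\Delta^p \times (\Delta^*)^q$ in $\Delta^{p+q}$ has codimension one, which places us outside the reach of Siu's classical codimension-two extension result. The way around this is to exploit the fact that Picard hyperbolicity gives extension along \emph{every} boundary-transverse disc, and combined with the Kähler hypothesis on $Y$ and Bishop's theorem on analyticity of sets of locally finite volume, this promotes the pointwise extension to the meromorphicity of $\overline{\Gamma}_f$. Checking the requisite local volume bound on $\overline{\Gamma}_f$ near the boundary, which is where the compact Kähler structure on $Y$ is really used, is the technical heart of the argument.
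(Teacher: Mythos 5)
Your proposal takes a genuinely different route from the paper's, and it leaves a real gap unaddressed. The paper's proof is a two-step reduction: first, by Siu's extension theorem for meromorphic maps across closed subsets of codimension $\geq 2$ (\cite[Theorem 1]{Siu75}), it suffices to extend across a \emph{single} component of the boundary, i.e.\ to treat a holomorphic map $f:\Delta^{r}\times\Delta^*\to Y^\circ$ — the locus in $\Delta^{p+q}$ where two or more of the $q$ punctured coordinates vanish simultaneously has codimension $\geq 2$, so the codim-$2$ theorem handles the corners. Second, Picard hyperbolicity gives a holomorphic extension of each slice $f|_{\{z\}\times\Delta^*}:\{z\}\times\Delta^*\to Y$, and Siu's fiberwise extension criterion \cite[p.442, ($\ast$)]{Siu75} then upgrades this to a meromorphic extension of $f$ across $\Delta^r\times\{0\}$. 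Your graph-closure approach skips the codimension-$2$ reduction and tries to attack the full codimension-one boundary of $\Delta^p\times(\Delta^*)^q$ at once; the essential analytic content is then relocated into a local volume bound on $\overline{\Gamma}_f$ needed for Bishop's theorem, which you explicitly acknowledge you have not verified. This is the gap: extension of $f\circ\gamma$ along boundary-transverse discs $\gamma$ does not by itself yield a uniform bound on the $2(p+q)$-dimensional Hausdorff volume of $\Gamma_f$ near the normal-crossing corners. Establishing that bound is precisely the hard step in Siu's own proof of ($\ast$), and it becomes strictly more awkward in your setting because at the corners several transversal directions degenerate simultaneously. In short, you are re-deriving a nontrivial piece of Siu's paper rather than citing it, and you stop short of the derivation. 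Your handling of the second assertion — pass to a log resolution, work in admissible coordinate charts modeled on $\Delta^p\times(\Delta^*)^q$, and glue the local meromorphic extensions by uniqueness — agrees with the paper's argument.
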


\begin{proof} 
By \cite[Theorem 1]{Siu75}, any meromorphic map from a Zariski open subset $Z^\circ$ of a complex manifold $Z$ to a compact K\"ahler manifold $Y$ extends to a meromorphic map from $Z$ to $Y$ provided that the codimension of $Z-Z^\circ$ is at least $2$.   The complement $\Delta^p\times (\Delta^*)^q$ in $\Delta^{p+q}$ is a simple normal crossing divisor $D$. We  remove a subvariety $Z\subset \Delta^{p+q}$ of codimension at least $2$ with $D-Z$  smooth.  Then  any point $x\in D-Z$ has an open  neighborhood $\Omega_x\subset \Delta^{p+q}-Z$ which is isomorphic to $\Delta^{p+q-1}\times \Delta^*$.  It then suffices to prove the extension theorem for any holomorphic map  $f\colon\Delta^{r}\times \Delta^*\to Y^\circ$.

By the assumption that $Y^\circ$ is Picard hyperbolic,  for any $z\in \Delta^r$, the holomorphic map  $f|_{\{z\}\times \Delta^*}\colon\{z\}\times \Delta^*\to Y^\circ$ can be extended to a holomorphic map from $\{z\}\times \Delta$ to $Y$. It then follows from  \cite[p.442,  ($\ast$)]{Siu75} that $f$ extends to a meromorphic map  $\overline{f}\colon\Delta^{r+1}\dashrightarrow Y$. This proves the first part of the proposition. To prove the second part, we first apply the Hironaka theorem on resolution of singularities  to assume that $X-X^\circ$ is a simple normal crossing divisor on $X$. Then any point $x\in X-X^\circ$ has an open  neighborhood $\Omega_x$ which is isomorphic to $\Delta^{p+q}$ so that $X^\circ\simeq \Delta^p\times (\Delta^*)^q$ under this isomorphism. The above arguments show that $g|_{\Omega_x\cap X^\circ}$ extends to a meromorphic map from $\Omega_x$ to $Y$, and thus  $g$ can be extended to  a meromorphic map from $X$ to $Y$. The proposition is proved.
\end{proof} 

Let us prove that \cref{def:Picard} does not depend on the compactification of $U$. This independence also implies the following result. 

\begin{lem}\label{lem:well-def}
	Let $U$ be a Zariski open subset of a compact  K\"ahler manifold $Y$.  If any holomorphic map $f\colon\Delta^*\to U$ extends to  $\bar{f}\colon\Delta\to Y$, then $Y$ is bimeromorphic to any other compact K\"ahler manifold $Y'$ which contains $U$ as a Zariski open set. In particular, $f\colon\Delta^*\to U$ also extends to a holomorphic map $\Delta\to Y'$.
\end{lem}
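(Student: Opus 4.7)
\smallskip
\textbf{Plan.} The strategy is to first produce a bimeromorphic map $\phi:Y'\dashrightarrow Y$ restricting to the identity on $U$, and then transfer the extension property from $Y$ to $Y'$ via this $\phi^{-1}$, using the fact that meromorphic maps out of a one-dimensional source are automatically holomorphic.

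\smallskip
\emph{Step 1: Construct $\phi$.} View the inclusion $U\hookrightarrow Y$ as a holomorphic map $g:U\to Y^\circ$, where $Y^\circ:=U\subset Y$ and where the domain $U$ is regarded as the Zariski open set $X^\circ$ of the compact K\"ahler manifold $X:=Y'$. The hypothesis of the lemma is precisely that $U$ is Picard hyperbolic with smooth K\"ahler compactification $Y$. Applying the second assertion of \cref{extension theorem} to $g$, we obtain a meromorphic extension $\phi:Y'\dashrightarrow Y$ of $\mathrm{id}_U$.

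\smallskip
\emph{Step 2: Bimeromorphy.} Since $U$ is a dense (Zariski) open subset of both $Y$ and $Y'$, and since $\phi|_U=\mathrm{id}_U$, the graph $\Gamma_\phi\subset Y'\times Y$ contains the diagonal over $U\times U$. The projection $\Gamma_\phi\to Y$ is thus a proper surjective holomorphic map that is biholomorphic over $U$, so it is a proper bimeromorphism, and similarly for $\Gamma_\phi\to Y'$. Hence $\phi$ is a bimeromorphism, and $\psi:=\phi^{-1}:Y\dashrightarrow Y'$ is a meromorphic map that is likewise the identity on $U$. This proves the first assertion.

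\smallskip
\emph{Step 3: Transfer the extension.} Let $f:\Delta^*\to U$ be given. By Picard hyperbolicity of $U$ via $Y$, extend $f$ to a holomorphic map $\bar{f}:\Delta\to Y$. Consider the composition $\psi\circ\bar{f}$. Since $\bar{f}(\Delta^*)\subset U$ and the indeterminacy locus $I(\psi)\subset Y$ is disjoint from $U$ (because $\psi$ is a biholomorphism over $U$), $\bar{f}(\Delta)\not\subset I(\psi)$, so the composition $\psi\circ\bar{f}:\Delta\dashrightarrow Y'$ is a well-defined meromorphic map that agrees with $f$ on $\Delta^*$. Now the indeterminacy locus of any meromorphic map is a closed analytic subset of codimension at least $2$; in the one-dimensional source $\Delta$ it must therefore be empty. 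Consequently $\psi\circ\bar{f}$ is holomorphic on all of $\Delta$, and furnishes the desired extension $\bar{f}':\Delta\to Y'$ of $f$.

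\smallskip
\emph{Main obstacle.} The delicate point is Step 3, namely ensuring that the meromorphic composition $\psi\circ\bar{f}$ really exists as a meromorphic map to $Y'$ (one must avoid $\bar{f}(\Delta)\subset I(\psi)$, which is where the identification $\psi|_U=\mathrm{id}_U$ is essential) and then promoting it to a holomorphic map by invoking the codimension-two fact about indeterminacy loci. If one prefers to avoid composing meromorphic maps, the same conclusion can be obtained by forming the analytic set $A=(\bar{f}\times\mathrm{id}_{Y'})^{-1}(\Gamma_\phi)\subset \Delta\times Y'$ and observing that its main irreducible component (the closure of $\Gamma_f$) maps properly and bijectively to $\Delta$, hence defines the graph of a holomorphic extension $\Delta\to Y'$ by the normalization of a proper bijection.
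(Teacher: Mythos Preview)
Your proof is correct and follows essentially the same strategy as the paper: apply \cref{extension theorem} to extend $\mathrm{id}_U$ to a meromorphic map between the two compactifications, deduce bimeromorphy, and then compose the inverse with $\bar{f}$ to transfer the extension to $Y'$. Your version is in fact slightly more careful than the paper's, making explicit the graph argument for bimeromorphy and the codimension-two reason why the composed meromorphic map $\psi\circ\bar{f}$ is automatically holomorphic on the one-dimensional source $\Delta$.
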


\begin{proof}
 	By blowing up $Y-U$ and $Y'-U$, we can assume that both $Y-U$ and $Y'-U$ are simple normal crossing divisors. By the same arguments as those in the proof of \cref{extension theorem}, the identity map of $U$ extends to  meromorphic maps $a\colon Y\dashrightarrow Y'$ and $b\colon Y'\dashrightarrow Y$. Note that $a\circ b|_{U}$ and $b\circ a|_U$ are identity maps. Hence $Y$ and $Y'$ are bimeromorphic.  Composing $b$ with $\bar{f}$, one obtains the desired extension $\Delta\to Y'$ of $f\colon\Delta^*\to U$ in $Y'$.
	\end{proof}

By  Chow's theorem, \cref{extension theorem} in particular gives an alternative proof of the fact that a Picard hyperbolic variety is moreover Borel hyperbolic, proven in \cite[Corollary 3.11]{JK18}.

\subsection{Proof of Theorem A}
This subsection is devoted to the proof of \cref{main}. We first recall the following criteria for Picard hyperbolicity established in \cite{DLSZ},
whose proof is   Nevanlinna-theoretic. 

\begin{thm}[\!\protecting{\cite[Theorem A]{DLSZ}}]\label{thm:criteria} 
	Let $Y$ be a projective manifold,   and let $D$ be a simple normal crossing divisor on $Y$. Let $f\colon\Delta^*\to Y- D$  be  a  holomorphic map. Assume that there is  a $($possibly degenerate$)$ Finsler metric $h$ of $T_Y(-\log D)$ such that $  |f'(t)|_h^2\not\equiv 0$ and 
	\begin{align} \label{eq:condition}
	\hess\log |f'(t)|_h^2\geq f^*\omega
	\end{align}
	for some smooth K\"ahler metric $\omega$ on $Y$.   Then $f$ extends to a holomorphic map $\overline{f}\colon\Delta\to Y$.
\end{thm}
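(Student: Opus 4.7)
The plan is to combine an Ahlfors--Schwarz-style upper bound for $|f'(t)|_h^2$ by the Poincar\'e density on $\Delta^*$ with a Kwack--Kobayashi-type extension argument leveraging compactness of $Y$.

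Set $u(t):=|f'(t)|_h^2$; by continuity of $h$ and the hypothesis $u\not\equiv 0$, the function $u$ is continuous on $\Delta^*$ and its zero set is nowhere dense. The hypothesis reads $\sqrt{-1}\partial\bar\partial \log u \ge f^*\omega$ on $\{u>0\}$. The \emph{first step} is to show that $u(t)\le C/(|t|^2(\log|t|^2)^2)$ on some punctured disc $\{0<|t|<r_0\}$. To this end, I would compare $\log u$ with the log-density $\log p := -\log(|t|^2(\log|t|^2)^2)$ of the complete Poincar\'e metric on $\Delta^*$: since $\sqrt{-1}\partial\bar\partial \log p$ is itself the Poincar\'e form, a maximum-principle argument on shrinking annuli applied to the regularized quantity $\log(u+\varepsilon)-\log p$ (with $\varepsilon\to 0$ afterwards) reduces the desired Schwarz bound to a pointwise lower bound on $f^*\omega$ by a sufficiently large multiple of the Poincar\'e form at any would-be interior maximum. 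Compactness of $Y$ and smoothness of $\omega$ supply this: at such a maximum, $|f'(t)|_h$ is bounded below in terms of the Poincar\'e density, and $\omega$ dominates a smooth hermitian metric comparable to $h$ on the locus where $h$ is definite and bounded away from $D$.

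Having the Schwarz bound, I would \emph{conclude extension} as follows. The estimate implies that $f$ has finite $h$-length along every radial arc approaching $0$, and integrating the curvature inequality over annuli $\{r<|t|<r_0\}$ together with a Stokes argument yields a uniform upper bound on the $\omega$-area $\int f^*\omega$ over $\{0<|t|<r_0\}$ as $r\to 0$. Coupling this with compactness of $Y$, a Bishop-type finiteness theorem for analytic sets of finite area combined with a Kwack argument shows that the cluster set $\bigcap_{r>0}\overline{f(\{0<|t|<r\})}$ is a single point $p\in Y$: otherwise two distinct cluster points could be joined by arcs of arbitrarily small $h$-length, contradicting the non-degeneracy of the Finsler pseudo-distance induced by $h$ on the locus where $h$ is definite. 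This yields a continuous extension $\bar f:\Delta\to Y$ with $\bar f(0)=p$; Riemann's removable singularity theorem applied in a local holomorphic chart around $p$ then promotes $\bar f$ to a holomorphic map.

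The \emph{main obstacle} is the Schwarz comparison in the first step. Since $h$ is a metric on $T_Y(-\log D)$ rather than on $T_Y$, and is only continuous and possibly degenerate, the ratio $f^*\omega/(u\cdot\sqrt{-1}\,dt\wedge d\bar t)$ can tend to zero as $f(t)$ approaches $D$, and the naive global Ahlfors--Schwarz inequality fails. To handle this I would carry out the maximum-principle argument in a logarithmic frame $(z_i\partial_{z_i},\partial_{z_j})$ for $T_Y(-\log D)$ near any boundary point of $Y$, where $h$ is genuinely comparable to a smooth hermitian metric, and transfer the resulting estimate back to the variable $t$ via $f$. The degeneracy of $h$ itself, i.e.\ the possible zero locus of $u$ in $\Delta^*$, is addressed by the regularization $u\mapsto u+\varepsilon$ coupled with a limiting argument and a standard approximation of currents by smooth forms.
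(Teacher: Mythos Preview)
Your approach has a genuine gap at exactly the point you flag as the ``main obstacle,'' and the fix you sketch does not close it. For the Ahlfors--Schwarz comparison you need, at an interior maximum of $\log u-\log p$, a pointwise inequality of the form $f^*\omega\ge c\,u\,\sqrt{-1}\,dt\wedge d\bar t$, i.e.\ $|f'(t)|_\omega^2\ge c\,|f'(t)|_h^2$. But $\omega$ measures $f'$ in $T_Y$ while $h$ measures it in $T_Y(-\log D)$; in an admissible chart with $D=(z_{s+1}\cdots z_n=0)$ one has $|f'|_\omega^2\sim\sum_j |f_j'|^2$ whereas $|f'|_h^2$ is comparable to $\sum_{j\le s}|f_j'|^2+\sum_{i>s}|f_i'/f_i|^2$. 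The ratio degenerates whenever $f$ is close to $D$ and moves transversally to it, and nothing prevents the maximum from occurring at such a point; working ``in a logarithmic frame'' does not help, because $\omega$ is a smooth form on $Y$, not a Poincar\'e-type form adapted to $D$. Your second step is also problematic: the Kwack argument you invoke relies on ``non-degeneracy of the Finsler pseudo-distance induced by $h$,'' but $h$ is explicitly allowed to be degenerate, and even the Schwarz bound $u\le C/(|t|^2(\log|t|^2)^2)$ gives \emph{infinite} $h$-length on radial arcs ($\int_0 dt/(t|\log t|)$ diverges).

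The proof in \cite{DLSZ} (and in the appendix the paper originally contained) is Nevanlinna-theoretic and bypasses the pointwise comparison entirely. One works with the order function $T_{f,\omega}(r)=\int_{r_0}^r\frac{d\tau}{\tau}\int_{\bD_\tau}f^*\omega$ and shows $T_{f,\omega}(r)=O(\log r)$, which is equivalent to extension. By Jensen's formula and the hypothesis \eqref{eq:condition}, $T_{f,\omega}(r)$ is controlled by $\frac{1}{2\pi}\int_0^{2\pi}\log|f'(re^{i\theta})|_h\,d\theta+O(\log r)$. In a log frame $(e_{\alpha j})$ as in \eqref{eq:basis}, the coefficients of $f'$ split into holomorphic derivatives $f_{\alpha j}'$ (transverse directions) and logarithmic derivatives $(\log f_{\alpha i})'$ ($D$-directions). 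The latter are handled by Nevanlinna's lemma on the logarithmic derivative, which gives $\frac{1}{2\pi}\int\log^+|(\log f_{\alpha i})'|\,d\theta\le C(\log^+T_{f,\omega}(r)+\log r)\ \lVert$; the former by a concavity/Borel-lemma argument of the same flavor. Feeding these back yields $T_{f,\omega}(r)\le C(\log^+T_{f,\omega}(r)+\log r)\ \lVert$, hence $T_{f,\omega}(r)=O(\log r)$. The logarithmic derivative lemma is precisely what replaces the failed pointwise bound: it is an \emph{averaged} estimate that tolerates $f$ approaching $D$.
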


We will combine \cref{thm:criteria} with \cref{thmx:uniform} to  prove  \cref{main}.

\begin{proof}[Proof of \cref{main}]
	By \cref{thmx:uniform}, there exist finitely many   compact K\"ahler log pairs $\{(X_i,D_i)\}_{i=0,\ldots,N}$ so that the following hold: 
	\begin{enumerate}
		\item There are   morphisms $\mu_i\colon X_i\to Y$ with $\mu_i^{-1}(D)=D_i$ so that each $\mu_i\colon X_i\to \mu_i(X_i)$ is a birational morphism and  $X_0=Y$ with $\mu_0=\mathds{1}$.
		\item \label{Finsler} There are smooth Finsler metrics $h_i$ for $T_{X_i}(-\log D_i)$ which is positive definite over a Zariski open subset $U_i^{\circ}$ of $U_i:=X_i-D_i$.
		\item \label{isomorphism} The restriction $\mu_i|_{U^\circ_i}\colon U^\circ_i\to \mu_i(U^\circ_i)$ is an isomorphism.
		\item \label{curvature}There are   smooth K\"ahler metrics $\omega_i$ on $X_i$ such that for any holomorphic map $\gamma\colon C\to U_i$ with $C$ an open subset of $\bC$ and $\gamma(C)\cap U^\circ_i\neq 0$, one has
		\begin{align}\label{eq:uniform}
		\hess \log |\gamma'|_{h_i}^2\geq \gamma^*\omega_i.   
		\end{align}
		\item  \label{stratum} For any $i\in \{0,\ldots,N\}$, either $		\mu_i( U_i)-\mu_i(U_i^\circ)$ is zero-dimensional, or  there exists an $I\subset  \{0,\ldots,N\}$ so that 
		$$
		\mu_i( U_i)-\mu_i(U_i^\circ)\subset \cup_{j\in I}\mu_j(X_j). 
		$$
	\end{enumerate} 
        Let us explain how to construct these log pairs. By the assumption, there is a $\bC$-PVHS  on $Y-D$ so that each fiber of the  period map  is zero-dimensional. In particular, the period map is    generically immersive. We then apply \cref{thmx:uniform} to construct a Finsler metric on $T_Y(-\log D)$ which is positive definite over some Zariski open subset $U^\circ$ of $U=Y-D$ with the desired curvature property \eqref{eq:desired}.  Set $X_0=Y$, $\mu_0=\mathds{1}$ and $U_0^\circ=U^\circ$.  Let $Z_1,\ldots,Z_m$ be all irreducible subvarieties of $Y-U^\circ$ which are not components of $D$. Then $Z_1\cup \ldots \cup Z_m\supset U\setminus U^\circ$. For each $i$, we take a desingularization $\mu_i\colon X_i\to Z_i$   so that $D_i:=\mu_i^{-1}(D)$ is a simple normal crossing divisor in $X_i$.  We pull back the  $\bC$-PVHS  to $U_i=X_i-D_i$   via $\mu_i$. Then   its period map is still generically immersive. We then apply \Cref{thmx:uniform} to construct the desired Finsler metrics in \cref{curvature} for $T_{X_i}(-\log D_i)$. We iterate this construction, and since at each step the dimension of $X_i$ is strictly decreasing, this algorithm stops after finitely many steps.

(i)~  We will first prove that $U$ is Picard hyperbolic. Fix any holomorphic map  $f\colon \Delta^*\to U$.    If $f(\Delta^*)\cap  U^\circ_0\neq \varnothing$, then $|f'(t)|_{h_0}\not\equiv0$ by \cref{Finsler}.  By  \cref{curvature},  there  is a  smooth K\"ahler metric $\omega_0$ on $X_0$ such that 
			\begin{align*}  
			\hess\log |f'(t)|_{h_0}^2\geq f^*\omega_0. 
		\end{align*} 
	We now apply \cref{thm:criteria} to   conclude that $f$ extends to a holomorphic map $\overline{f}\colon\Delta\to X_0=Y$.  
		
		Now assume $f(\Delta^*)\cap \mu_0(U^\circ_0)=\varnothing$. By \cref{stratum},  there exists an $I_0\subset  \{0,\ldots,N\}$ so that 
		$$
		f(\Delta^*)\subset 	 \mu_0( U_0)-\mu_0(U_0^\circ)\subset \cup_{j\in I_0}\mu_j(X_j). 
		$$
		Since the $\mu_j(X_j)$ are all irreducible, there exists a $k \in I_0$ so that $f(\Delta^*)\subset \mu_k(X_k)$. Note that $U_k:=\mu_k^{-1}(U)$. Hence $f(\Delta^*)\subset \mu_k(U_k)$. If $f(\Delta^*)\cap \mu_k(U_k^\circ)\neq \varnothing$, then by \cref{isomorphism},  $f(\Delta^*)$ is not contained in the exceptional set of $\mu_k$.  Hence $f$ can be lifted to $f_k\colon\Delta^*\to U_k$, so that $\mu_k\circ f_k=f$ and $f_k(\Delta^*)\cap U_k^\circ\neq\varnothing$. By \cref{thm:criteria} and \cref{curvature} again, we conclude that $f_k$ extends to a holomorphic map $\overline{f}_k\colon\Delta\to X_k$. Hence $\mu_k\circ\overline{f}_k$ extends $f$. If $f(\Delta^*)\cap \mu_k(U_k^\circ)=\varnothing$, we apply \cref{stratum} to iterate the above arguments, and after finitely many steps,  there exists an $X_i$ so that $f(\Delta^*)\subset \mu_i(U_i)$ and $f(\Delta^*)\cap \mu_i(U_i^\circ)\neq \varnothing$. By \cref{isomorphism},  $f$ can be lifted to $f_i\colon\Delta^*\to U_i$ so that $\mu_i\circ f_i=f$ and $f_i(\Delta^*)\cap U_i^\circ\neq \varnothing$. By  \cref{thm:criteria} and \cref{curvature} again, $f_i$ extends to the origin, and so does $f$. This proves the Picard hyperbolicity of $U=Y-D$.

(ii)~ Let us prove the algebraic hyperbolicity of $U$ in as similar vein as   \cite[Proof of Theorem D]{DLSZ}. Fix any reduced and irreducible curve $C\subset Y$ with $C\not\subset D$. By the above arguments, there exists an $i\in \{0,\ldots,N\}$ so that $C\subset \mu_i(X_i)$  and $C\cap \mu_i(U_i^\circ)\neq \varnothing$. Let $C_i\subset X_i$ be the strict transform of $C$  under $\mu_i$.  By \cref{isomorphism},  $h_i|_{C_i}$ is not identically equal to zero. 
		
		Denote by $\nu_i\colon\tilde{C}_i\to C_i\subset X_i$
  the normalization of $C_i$, and set $P_i:=(\mu_i\circ\nu_i)^{-1}(D)=\nu_i^{-1}(D_i)$. Then
  $$
  d\nu_i\colon T_{\tilde{C}_i}(-\log P_i)\lra \nu_i^*T_{X_i}(-\log D_i)
  $$
induces   a (non-trivial) hermitian pseudo metric $\tilde{h}_i:=\nu_i^*h_i$  over $T_{\tilde{C}_i}(-\log P_i)$. By \eqref{eq:uniform}, the \emph{curvature current} of $\tilde{h}_i^{-1}$ on $\tilde{C}_i$ satisfies
		$$
		\frac{\sn}{2\pi}\Theta_{\tilde{h}_i^{-1}}(K_{\tilde{C}_i}(\log P_i))\geq \nu_i^*\omega_i.
		$$
		Hence
		$$
		2g(\tilde{C}_i)-2+i(C,D)=\int_{\tilde{C}_i}  \frac{\sn}{2\pi}\Theta_{\tilde{h}_i^{-1}}\left(K_{\tilde{C}_i}(\log P_i)\right)\geq \int_{\tilde{C}_i} \nu_i^*\omega_i.
		$$
		Fix a K\"ahler metric $\Omega_Y^1$ on $Y$. Then there is a constant $\ep_i>0$ so that $\omega_i\geq\ep_i\mu_i^*\Omega_Y^1$. We thus have
		$$
		2g(\tilde{C}_i)-2+i(C,D)  \geq \ep_i\int_{\tilde{C}_i} (\mu_i\circ \nu_i)^*\Omega_Y^1=\ep_i\deg_{\Omega_Y^1}C
		$$
	for $\mu_i\circ\nu_i\colon \tilde{C}_i\to C$  the normalization of $C$. 
		Set $\ep:=\inf_{i=0,\ldots,N}\ep_i$. Then we conclude that for any  reduced and irreducible curve $C\subset Y$ with $C\not\subset D$, one has
		$$
		2g(\tilde{C})-2+i(C,D)  \geq \ep\deg_{\Omega_Y^1}C, 
		$$
		where $\tilde{C}\to C$ is its normalization. This shows the algebraic hyperbolicity of  $U$. The proof of the theorem is accomplished.
\end{proof}

Let us mention that the idea of using Finsler metrics to prove the hyperbolicity in the above theorem was inspired by the work of To--Yeung in \cite{TY14}.

 \begin{rem}\label{JL}
Let $U$ be a  quasi-projective manifold  admitting an integral variation of Hodge structures   whose  period map is quasi-finite. In \cite[Theorem 4.2]{JL19}, Javanpeykar--Litt proved that  $U$  is \emph{weakly bounded} in the sense of Kov\'acs--Lieblich \cite[Definition 2.4]{KL10} (which is weaker than algebraic hyperbolicity). Though not mentioned explicitly,   their proof of \cite[Theorem 4.2]{JL19} implicitly  shows that such a $U$ is also algebraically hyperbolic when the   local monodromies of   the $\bC$-PVHS at infinity are unipotent.  
 Their proof is based on the work   \cite{BBT18}  as well as the Arakelov-type inequality for Hodge bundles by Peters \cite{Pet00}.   
 \end{rem}

We end this section with the following remark.

\begin{rem}
Let $(E,\theta)$ be the system of log Hodge  bundles on a log pair $(Y,D)$ as that in \cref{thm:uniform}. One can also use the idea by Viehweg--Zuo  \cite{VZ02} in constructing their \emph{Viehweg--Zuo} sheaf (based on the negativity of kernels of Higgs fields by Zuo \cite{Zuo00}) to prove a weaker result than \cref{thm:uniform}: for any holomorphic  map $\gamma\colon C\to U$  from any open subset $C$ of $ \bC$ with $\gamma(C)\cap U^\circ\neq \varnothing$, there exist a Finsler metric $h_C$ of $T_Y(-\log D)$ (depending on $C$) and a K\"ahler metric $\omega_C$ for $Y$ (also depending on $C$) so that $|\gamma'(t)|_h^2\not\equiv 0$ and
	\begin{align*} 
	\hess \log |\gamma'|_{h_C}^2\geq \gamma^*\omega_C. 
	\end{align*}   
	It follows from our proof of \cref{main} that one can also combine  \cref{thm:criteria} with this result, which is only weaker in appearance,  to prove \cref{main}. The more general result \cref{thmx:uniform} will be used to prove \Cref{pseudo Kobayashi} in the next section.
\end{rem} 

\section{Hyperbolicity for the compactification after a finite \'etale cover}\label{sec:strong}  
 
In this section, we will prove \cref{thmx:refined,Picard symmetric}. We first prove the following theorem.

\begin{thm}\label{strong}
 	Let $U$ be a quasi-compact K\"ahler manifold.  Assume that    there is a  $\bC$-PVHS   over $U$  whose period map is   immersive at one point.   Then there are a finite \'etale cover $\tilde{U}\to U$ together with a compact K\"ahler compactification $X$ of\, $\tilde{U}$ and a proper  Zariski closed subvariety $Z\subsetneq X$   so that
 	\begin{thmlist}
 		\item \label{general type}  the variety $X$    is  of  general type; 
 		\item  \label{pseudo Kobayashi} the variety $X$  is  Kobayashi hyperbolic modulo $Z$; 
 		\item  \label{pseudo Picard} the variety $X$  is   Picard hyperbolic modulo $Z$; 
 		\item  \label{pseudo algebraic} the variety $X$  is  algebraically hyperbolic modulo $Z$.
 	\end{thmlist}
 \end{thm}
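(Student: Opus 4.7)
The plan is to combine the negatively-curved Finsler metric of Theorem \ref{thmx:uniform} with a carefully chosen finite étale cover produced from the residual finiteness of the monodromy group. Fix a smooth K\"ahler log compactification $(Y,D)$ of $U$ and, via Proposition \ref{prop:new}, a special system of log Hodge bundles $(E,\theta)$ containing a big line bundle $L\subset E^{p_0,q_0}$ with $L\otimes\cO_Y(-D)$ still big. Let $\rho:\pi_1(U)\to GL_r(\bC)$ be the monodromy representation of the $\bC$-PVHS and $\Gamma:=\rho(\pi_1(U))$; as $\Gamma$ is finitely generated and linear, it is residually finite by Mal'cev. Denote by $\gamma_i\in\Gamma$ the image of a small clockwise loop around each component $D_i\subset D$. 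After a preliminary étale cover killing the finitely many $\gamma_i$ of finite order, one may assume every $\gamma_i$ has infinite order. Given a large integer $m$ (to be fixed later), residual finiteness yields a finite-index normal subgroup $\Gamma_m\triangleleft\Gamma$ with $\gamma_i^k\notin\Gamma_m$ for every $i$ and every $1\le k<m$; the corresponding étale cover $\tilde U\to U$ is our candidate.

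Pick a smooth K\"ahler compactification $(X,\tilde D)$ of $\tilde U$ with $\tilde D$ simple normal crossing, so that the induced finite map $\mu:X\to Y$ satisfies $\tilde D=\mu^{-1}(D)_{\mathrm{red}}$. By construction, every component $\tilde D_j\subset\tilde D$ mapping to $D_i$ has ramification index $e_j\ge m$, yielding the divisorial inequality $\mu^*D\ge m\tilde D$ and hence
\[
\mu^*L-m\tilde D \;=\; \mu^*(L-D)+(\mu^*D-m\tilde D)
\]
is big on $X$ as the sum of a big class and an effective divisor. Pulling back $(E,\theta)$ produces a system of log Hodge bundles on $(X,\tilde D)$ satisfying the three conditions of Proposition \ref{prop:new} (with $\mu^*L$ in place of $L$), so Theorem \ref{Deng} yields a generically injective morphism $T_X(-\log\tilde D)\hookrightarrow \mu^*L^{-1}\otimes \mu^*E^{p_0-1,q_0+1}$, while Theorem \ref{thmx:uniform} applied to the pulled-back data provides a Finsler metric $\tilde F$ on $T_X(-\log\tilde D)$, positively definite on a dense Zariski open $\tilde U^\circ\subset\tilde U$, with $\hess\log|\gamma'|^2_{\tilde F}\ge\gamma^*\tilde\omega$ for every holomorphic curve $\gamma$ meeting $\tilde U^\circ$.

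For \cref{general type}, observe that $K_X+\tilde D=\mu^*(K_Y+D)$ by the ramification formula. The Viehweg--Zuo-type mechanism underlying Theorems \ref{thmx:uniform} and \ref{Deng} produces non-zero sections of suitable symmetric powers of $\Omega_X^1(\log\tilde D)$ twisted by negative multiples of $\mu^*L$; combining these with the bigness of $\mu^*L-m\tilde D$ for $m\gg 0$ converts the pseudo-effectivity of $K_X+\tilde D$ into bigness of $K_X$, trading the boundary $\tilde D$ against the large ramification encoded in $\mu^*D$. For items \cref{pseudo Kobayashi,pseudo Picard,pseudo algebraic}, let $Z\subsetneq X$ be the union of the exceptional locus of $\mu$, $\mathbf{B}_+(\mu^*L\otimes\cO_X(-\tilde D))$, and the image in $X$ of the positive-dimensional fibres of the period map. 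For any $\gamma:\Delta^*\to X$ whose image is not contained in $Z$, lifting to $\tilde U$ and applying the criterion Theorem \ref{thm:criteria} to $\tilde F$ gives Picard hyperbolicity modulo $Z$. Brody hyperbolicity modulo $Z$ follows from the same curvature inequality via Ahlfors' lemma, Kobayashi hyperbolicity modulo $Z$ is a standard consequence on the compact K\"ahler manifold $X$, and algebraic hyperbolicity modulo $Z$ follows verbatim from the computation used for algebraic hyperbolicity in the proof of \cref{main}.

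The main obstacle is the quantitative step for \cref{general type}: one must show that the residually-finite cover with ramification $\ge m$ suffices to promote bigness of $L\otimes\cO_Y(-D)$ on $Y$ into bigness of $K_X$ (rather than merely $K_X+\tilde D$). This requires a careful local analysis near $\tilde D$ using the Simpson--Mochizuki asymptotics of the Hodge metric, the explicit form of the iterated Higgs-field maps $\tau_k$ of \eqref{iterated Kodaira2} in admissible coordinates, and an argument that distributes the positivity of the \emph{intermediate-stage} line bundle $L\subset E^{p_0,q_0}$ across all directions of $T_X(-\log\tilde D)$ to absorb the discrepancy $\tilde D$. It is precisely this quantitative boundary analysis which forces the construction of $\tilde U\to U$ to go through residual finiteness, rather than any fixed log compactification of $U$.
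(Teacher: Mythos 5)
Your strategy (residual finiteness to force high ramification, then pull back the special system of log Hodge bundles and construct a negatively curved Finsler metric) is the right framework and matches the paper's outline, but the execution has two gaps which together invalidate all four conclusions, not just \cref{general type} where you flag a problem.

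\emph{First, the divisorial inequality $\mu^*D\geq m\tilde D$ does not hold.} Your preliminary cover ``killing the finitely many $\gamma_i$ of finite order'' is problematic: if $\gamma_i$ has order $n_i$ and one passes to a cover ramified over $D_i$, the new local monodromy is a power $\gamma_i^e$, which becomes \emph{trivial} (not infinite order) when $n_i\mid e$; so one cannot ``assume every $\gamma_i$ has infinite order.'' More fundamentally, even after the residually finite cover, there is no ramification bound on components over which the pulled-back local monodromy is trivial. What the paper actually proves (\cref{claim}) is a \emph{dichotomy}: for each component $\tilde D_j$, either ${\rm ord}_{\tilde D_j}(\mu^*D)\geq m$, or the local monodromy around $\tilde D_j$ is trivial. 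The argument uses the Cauchy argument principle to compute the winding numbers $(n_1,\dots,n_k)$ of the cover near $\tilde D_j$, identifies the lifted local monodromy with $(T_1^{(\alpha)})^{n_1}\cdots(T_k^{(\alpha)})^{n_k}\in\tilde\Gamma$, and invokes the choice of $\tilde\Gamma$ avoiding the finite set $\mathfrak S$ of short monodromy products.

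\emph{Second, and this affects all four items, your Finsler metric lives on $T_X(-\log\tilde D)$ rather than on $T_X$.} With a metric only on the log tangent bundle you can prove Picard/algebraic hyperbolicity of $\tilde U$, i.e.\ Theorem A applied to $\tilde U$, but not the stated hyperbolicity of the compactification $X$ modulo $Z$: a disk $\gamma:\Delta^*\to X$ not contained in $Z$ can still meet $\tilde D$ transversally at isolated points, and then $d\gamma$ does not factor through $T_X(-\log\tilde D)$; similarly for algebraic hyperbolicity you get the weaker inequality $2g(\tilde C)-2+i(C,\tilde D)\geq\epsilon\deg_\omega C$, and for general type you get bigness of $K_X+\tilde D$ rather than of $K_X$. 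The mechanism the paper uses to pass from the log to the genuine tangent bundle is precisely what you label ``the main obstacle'': one sets $D_X\subset\tilde D$ to be the components with nontrivial local monodromy, observes that the pulled-back $\bC$-PVHS \emph{extends} across $\tilde D\setminus D_X$, takes the canonical extension $(G,\eta)$ of this $\bC$-PVHS over $(X,D_X)$ (so $\eta$ has log poles along $D_X$ only), and exhibits the inclusion $\tilde L\otimes\cO_X(\ell D_X)\subset G^{p_0,q_0}$ with $\tilde L:=\mu^*L-\ell D_X$ and $\tilde L-D_X$ both big. Iterating $\eta$ at most $\ell$ times produces at most $\ell$-fold poles along $D_X$, which the twist $\cO_X(\ell D_X)$ absorbs, giving morphisms $\tilde\tau_k:{\rm Sym}^kT_X\to G^{p_0-k,q_0+k}\otimes\tilde L^{-1}$ and eventually sections of ${\rm Sym}^\beta\Omega_X^1$ and a Finsler metric on $T_X$ itself. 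This is what makes all four conclusions hold for $X$ modulo $Z$; without it, what you prove is only Theorem A for $\tilde U$ plus pseudo-effectivity of $K_X+\tilde D$.
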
  
 
Let us briefly explain the idea of the proof of \cref{strong} and some related results. Let $Y$ be a compact K\"ahler manifold compactifying $U$ with $D:=Y-U$ a simple normal crossing divisor. 
By   \cref{prop:new}, there is a  special system of log Hodge  bundles $(E,\theta):=(\oplus_{p+q=\ell}E^{p,q},\oplus_{p+q=\ell}\theta_{p,q})$ on $(Y,D)$ satisfying the properties therein. We divide the proof into four steps.
\begin{enumerate}
  	\item The first step is devoted  to constructing a compact K\"ahler log pair $(X,\tilde{D})$ and a generically finite surjective log morphism $\mu\colon (X,\tilde{D})\to (Y,D)$   which is \'etale over $U$ so that for each irreducible component $\tilde{D}_i$ of $\tilde{D}$,
	\begin{itemize}
		\item either ${\ord}_{\tilde{D}_i}(\mu^*D)\gg 1$, 
		\item or the local monodromy of the pull-back $\bC$-PVHS over $\tilde{U}$ around $\tilde{D}_i$ is trivial. 
	\end{itemize} To find this   $\mu$, we apply the well-known result that that monodromy group of a $\bC$-PVHS is residually finite and use the Cauchy argument principle to show the high ramification over irreducible components of $\tilde{D}$ around which the local monodromies are not trivial. Let us mention that this step is quite different from those in  \cite{Nad89,Rou16,Bru16,Cad18} for the   hyperbolicity of compactifications of quotients of bounded symmetric domains by a torsion-free lattice, as they all applied Mumford's work on  toroidal compactifications of quotients of bounded symmetric domains \cite{Mum77} so that ${\ord}_{\tilde{D}_i}(\mu^*D)\gg 1$ for all $\tilde{D}_i$. In general, we are not sure that such a covering  can be found in our case. 
	\item  The second step is to construct a new system of log Hodge bundles  $(G=\oplus_{p+q=\ell}G^{p,q}, \eta)$ over  $(X,\tilde{D})$ which is the canonical extension of the pull-back of the $\bC$-PVHS via $\mu$. This system of log Hodge  bundles $(G=\oplus_{p+q=\ell}G^{p,q}, \eta)$ on $(X,\tilde{D})$  satisfies the two conditions in \cref{prop:new}. Moreover, some   $G^{p_0,\ell-p_0}$ contains   $\tilde{L}\otimes \cO_X(\ell D_X)$ with $\tilde{L}$ a big line bundle. Here   $D_X$ is the sum of irreducible components of $\tilde{D}$ around which the local monodromies of the pull-back $\bC$-PVHS are not trivial (hence $\mu$ is highly ramified over $D_X$).  Note that $(G,\eta)$ has singularities along $D_X$ instead of $\tilde{D}$ since the pull-back $\bC$-PVHS extends across the components where the local monodromies are trivial (see \eqref{eq:regular}.)
	\item The third step is to prove \cref{general type}. We start with $G^{p_0,\ell-p_0}$ and iterate the Higgs field $\eta$, ending at finitely many steps. By  the negativity of the kernel of $\tilde{\theta}$,   $\tilde{L}\otimes \cO(\ell D_X)\subset G^{p_0,\ell-p_0}$, and \eqref{eq:regular}, we can construct  an ample sheaf contained in some symmetric differential ${\Sym}^\beta \Omega_X^1$ (rather than on ${\Sym}^\beta \Omega_X^1(\log \tilde{D})$!). It follows from a celebrated theorem of Campana--P\u{a}un \cite{CP19} that $X$ is of general type. Let us mention that this idea of iterating Higgs fields to their kernels, originally due to Viehweg--Zuo \cite{VZ02}, has been  used by Brunebarbe in \cite{Bru16},   in which he proved similar results for quotients of bounded symmetric domains by arithmetic groups.  There are also some similar results for \emph{quotients of bounded domains} by Boucksom--Diverio \cite{BD18} and Cadorel--Diverio--Guenancia \cite{CDG19}.
	\item The last step is to prove \cref{pseudo Kobayashi}--\cref{pseudo algebraic}. We use the above system of log Hodge  bundles $(G,\eta)$ and ideas in \Cref{sec:construction} to construct a Finsler metric $F$ on $T_X$ (rather than $T_X(-\log D)$!) due to the extra positivity $\tilde{L}\otimes \cO(\ell D_X)\subset G^{p_0,\ell-p_0}$. Such a metric $F$ is generically positive and has holomorphic sectional curvature bounded from above by a negative constant by \cref{curvature estimate}. By the Ahlfors--Schwarz lemma, we conclude that $X$ is  Kobayashi hyperbolic modulo a proper closed subvariety, and by \cref{thm:criteria}, the  Picard hyperbolicity  modulo a proper subset of $X$ follows. Let us mention that Rousseau \cite{Rou16} has proved a similar result for hermitian symmetric spaces, which was later refined by Cadorel \cite{Cad18}. Their methods use Bergman metrics for bounded symmetric domains instead of Hodge theory.
\end{enumerate} 

Now we start the detailed proof of \cref{strong}.

\begin{proof}[Proof of \cref{strong}]   
By   \cref{prop:new}, there is a system of log Hodge  bundles $(E,\theta)\!\!=\!\!(\oplus_{p+q=\ell}E^{p,q},\oplus_{p+q=\ell}\theta_{p,q})$  over $(Y,D)$ satisfying the two conditions therein. In particular, there are a big line bundle $L$ on $Y$ and an inclusion $L\subset E^{p_0,\ell-p_0}$ for some $0\leq p_0\leq \ell$. Pick $m\gg 1$ so that $L-\frac{\ell+1}{m}D$ is   a big $\mathbb{Q}$-line bundle.

\medskip

\noindent {\it Step 1a}.  
Fix a base point $y\in U:=Y-D$. Let us denote by $\rho\colon\pi_1(U,y)\to GL(r,\bC)$ the monodromy representation of the corresponding $\bC$-PVHS and denote by $\Gamma:=\rho(\pi_1(U, y))$ its monodromy group, which is a finitely generated linear group, hence residually finite by a theorem of Malcev \cite{Mal40}. Let us cover $Y$ by finitely many admissible coordinate systems
$$\left\{\left(\mathcal{U}_{\alpha}; z_1^{(\alpha)},\ldots,z^{(\alpha)}_d\right) \right\}_{\alpha\in S},
$$
where $S$ is a finite set, so that
$$
D\cap \cU_{\alpha}=\left(z_1^{(\alpha)}\cdots z_{k_\alpha}^{(\alpha)}=0\right).
$$
Write $\cU_{\alpha}^*:=\cU_{\alpha}-D$. 
The fundamental group $\pi_1(\mathcal{U}_\alpha^*, y_\alpha)\simeq \pi_1((\Delta^*)^{k_\alpha}\times \Delta^{d-k_\alpha},y_\alpha)\simeq \bZ^{k_\alpha}$   is abelian. Pick a base point $y_\alpha\in \cU_\alpha^*$. Let $e^{(\alpha)}_1,\ldots,e^{(\alpha)}_{k_\alpha}$ be the generators of $\pi_1((\Delta^*)^{k_\alpha}\times \Delta^{d-k_\alpha},y_\alpha)$; namely, $e^{(\alpha)}_i$ is a clockwise loop around the origin in the $\supth{i}$ factor $\Delta^*$. Pick a path $h_\alpha\colon[0,1]\to Y-D$ connecting $y_{\alpha}$ with $y$, and denote by $\gamma^{(\alpha)}_i\in \pi_1(Y-D, y)$ the equivalent class of the loop $h_\alpha^{-1}\cdot e_i^{(\alpha)}\cdot h_\alpha$. Set $T^{(\alpha)}_i:=\rho(\gamma^{(\alpha)}_i)$.   Clearly, $T_1^{(\alpha)},\ldots, T_{\alpha_k}^{(\alpha)}$ commute pairwise.  

Let $\mathfrak{S}\subset \Gamma$ be the finite subset defined by  
\begin{align}\label{eq:choice}
\left\{ \left(T_{1}^{(\alpha)}\right)^{\ell_1}\cdots\left(T_{k_\alpha}^{(\alpha)}\right)^{\ell_{k_\alpha}}  \mid  \alpha\in S,     0\leq \ell_i<m  \right\}, 
\end{align}
where  $m$ is the integer chosen at the beginning. It follows from the definition of a  residually finite group that there is a normal subgroup $\tilde{\Gamma}$ of $ \Gamma$ with finite index   so that 
\begin{align}\label{eq:choice2}
\mathfrak{S}\cap \tilde{\Gamma}=\{0\}.
\end{align}
Then $\rho^{-1}(\tilde{\Gamma})$ is a normal subgroup of $\pi_1(U,y)$  with finite index. Let $\nu\colon\tilde{U}\to U$ be the finite \'etale cover of $U$ so that for the induced map of the fundamental group
 $\nu_*\colon \pi_1(\tilde{U}, x)\to \pi_1(U, y)$, its image is 
  $\rho^{-1}(\tilde{\Gamma})$. Here $x\in \tilde{U}$ with $\mu(x)=y$. We consider $\pi_1(\tilde{U}, x)$ as a subgroup of $\pi_1(U, y)$ of finite index. Since the monodromy representation of the pull-back  of the $\bC$-PVHS on $\tilde{U}$ is the restriction 
  \[
  \rho|_{\pi_1(\tilde{U}, x)}\colon \pi_1(\tilde{U}, x)\lra GL(r, \bC),
  \] 
  its monodromy group is thus $\tilde{\Gamma}$.

\medskip

\noindent {\it Step 1b}.
Note that $U$ is quasi-projective. Hence $\tilde{U}$ is also quasi-projective.  Let us take a smooth projective compactification $X$ of $\tilde{U}$ with $\tilde{D}:=X-\tilde{U}$ simple normal crossing so that $\nu\colon\tilde{U}\to U$ extends to a log morphism $\mu\colon(X,\tilde{D})\to (Y,D)$. Write $\tilde{D}=\sum_{j=1}^{n}\tilde{D}_j$, where the $\tilde{D}_j$ are irreducible components of $\tilde{D}$.   

\begin{claim}\label{claim}
For each $j=1,\ldots,n$,   one has
\begin{itemize}
	\item either  ${\ord}_{\tilde{D}_j}(\mu^*D) \geq m,$ 
	\item or the local monodromy group of the pull-back\, $\bC$-PVHS around $\tilde{D}_j$ is trivial.
\end{itemize} 
\end{claim}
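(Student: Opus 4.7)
The plan is to reduce to an explicit local computation at the generic point of each $\tilde{D}_j$ and then apply the arithmetic of $\tilde{\Gamma}$ built into its definition. Fix $j$ and pick a generic point $p\in\tilde{D}_j$ not lying on any other component of $\tilde{D}$; set $q:=\mu(p)\in D$. Choose one of the admissible charts $(\cU_\alpha;z_1^{(\alpha)},\dots,z_d^{(\alpha)})$ from our finite system with $q\in\cU_\alpha$ and, after relabelling, assume that $z_1^{(\alpha)},\dots,z_s^{(\alpha)}$ cut out precisely the components of $D$ through $q$. Then shrink a polydisk neighborhood $W$ of $p$ with coordinates $(w_1,\dots,w_d)$ so that $W\cap\tilde{D}=(w_1=0)$ and $\mu(W)\subset\cU_\alpha$.

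Next I would unpack the local structure of $\mu$. Since $\mu$ restricts on $W\setminus\tilde{D}_j=W\cap\tilde{U}$ to the finite unramified cover $\nu:\tilde{U}\to U$, its image lies in $U=Y\setminus D$, and hence each pullback $\mu^*z_k^{(\alpha)}$ with $k\leq s$ is a holomorphic function on $W$ whose zero-locus is contained in $(w_1=0)$ and which vanishes at $p$. One therefore has the factorization
\[
\mu^*z_k^{(\alpha)}=w_1^{\ell_k}\,v_k,\qquad v_k\in\cO(W)^\times,\ \ell_k\geq 1,\quad k=1,\dots,s,
\]
while $\mu^*z_k^{(\alpha)}$ is a unit on $W$ for $k>s$; in particular ${\rm ord}_{\tilde{D}_j}(\mu^*D)=\sum_{k=1}^s\ell_k$. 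Now consider the small loop $\tilde{e}(t)=(\epsilon e^{2\pi\sn t},w_2^0,\dots,w_d^0)$, $t\in[0,1]$, in $W\setminus\tilde{D}_j$, which generates the local $\pi_1$ around $\tilde{D}_j$. By the Cauchy argument principle applied to $\mu^*z_k^{(\alpha)}\circ\tilde{e}=\epsilon^{\ell_k}e^{2\pi\sn\ell_k t}(v_k\circ\tilde{e})$, the composition $\mu\circ\tilde{e}$ has winding number $\ell_k$ around $(z_k^{(\alpha)}=0)$ and winding number $0$ around the other components of $D\cap\cU_\alpha$. Identifying $\pi_1(\cU_\alpha^*)\simeq\bZ^{k_\alpha}$, this gives
\[
[\mu\circ\tilde{e}]=\ell_1 e_1^{(\alpha)}+\cdots+\ell_s e_s^{(\alpha)}.
\]

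Transporting to $\pi_1(U,y)$ via the path $h_\alpha$ (any additional segment needed to reach the basepoint is immaterial since $\pi_1(\cU_\alpha^*)$ is abelian), I find that $\mu_*[\tilde{e}]$ is conjugate in $\pi_1(U,y)$ to $\gamma:=(\gamma_1^{(\alpha)})^{\ell_1}\cdots(\gamma_s^{(\alpha)})^{\ell_s}$. Hence the local monodromy $\tilde{T}_j:=\rho(\mu_*[\tilde{e}])$ of the pull-back $\bC$-PVHS around $\tilde{D}_j$ is conjugate in $\Gamma$ to $T:=(T_1^{(\alpha)})^{\ell_1}\cdots(T_s^{(\alpha)})^{\ell_s}$. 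On the one hand $\tilde{T}_j\in\tilde{\Gamma}$, because the monodromy of the pull-back PVHS factors through $\rho\circ\nu_*$ whose image equals $\tilde{\Gamma}$; on the other hand $\tilde{\Gamma}\triangleleft\Gamma$ is normal, so also $T\in\tilde{\Gamma}$.

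To conclude, suppose ${\rm ord}_{\tilde{D}_j}(\mu^*D)<m$. Then each $\ell_k<m$, and padding the exponents beyond $s$ by zero shows $T\in\mathfrak{S}$ in the sense of \eqref{eq:choice}. Combined with \eqref{eq:choice2}, which says $\mathfrak{S}\cap\tilde{\Gamma}=\{\mathrm{id}\}$, this forces $T=\mathrm{id}$, and hence also $\tilde{T}_j=\mathrm{id}$, which is exactly the second alternative of the claim. The main technical point I expect is ensuring that the local analysis is uniform across both the dominant components of $\tilde{D}$ (where $\mu(\tilde{D}_j)$ covers a single $D_i$ and $s=1$) and the exceptional components (where $\mu(\tilde{D}_j)$ lies in a higher-codimension stratum of $D$ and $s>1$); both cases are subsumed by allowing arbitrary $s\geq 1$ in the chart computation above. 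The normality of $\tilde{\Gamma}$ in $\Gamma$ is the crucial algebraic input that absorbs the path-ambiguity inherent in $\mu_*[\tilde{e}]$.
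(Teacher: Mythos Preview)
Your proposal is correct and follows essentially the same approach as the paper: both reduce to a local winding-number computation via the Cauchy argument principle to identify $\mu_*[\tilde{e}]$ with $(\gamma_1^{(\alpha)})^{\ell_1}\cdots(\gamma_s^{(\alpha)})^{\ell_s}$, then invoke $\mathfrak{S}\cap\tilde{\Gamma}=\{\mathrm{id}\}$. The only cosmetic difference is that you use normality of $\tilde{\Gamma}$ to absorb the basepoint/conjugacy ambiguity, whereas the paper lifts the path $h_\alpha$ directly to $\tilde{U}$ to land on the nose in $\nu_*\pi_1(\tilde{U})$.
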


\begin{proof}[Proof of \cref{claim}]
Since $\{\cU^{(\alpha)}\}_{\alpha \in S}$ covers $D$, there is an $\alpha\in S$ so that for the admissible coordinate system $(\mathcal{U}^{(\alpha)}; z^{(\alpha)}_1,\ldots,z^{(\alpha)}_d)$, one has  $\mu^{-1}(\cU^{(\alpha)})\cap \tilde{D}_j\neq \varnothing$. We will write $(\cU; z_1,\ldots,z_d)$ instead of $(\mathcal{U}^{(\alpha)}; z^{(\alpha)}_1,\ldots,z^{(\alpha)}_d)$ and $k$ instead of $k_\alpha$ to lighten the notation.  Namely, $\cU\cap D=(z_1\cdots z_k=0)$. Note that $k\geq 1$.

 Pick a point $x\in \tilde{D}_j-\cup_{i\neq j}\tilde{D}_i$ so that  there is an admissible coordinate system $(\cW;x_1,\ldots,x_n)$ with $\mu(\cW)\subset \cU$ and $\cW\cap \tilde{D}=(x_1=0)$. 
 Denote by $ (\mu_1(x),\ldots,\mu_d(x))$ the expression of $\mu$ within these   coordinates. Then
 \[ (\mu_1(x),\ldots,\mu_d(x))=\left(x_1^{n_1}\nu_1(x),\ldots,x_1^{n_k}\nu_{k}(x),\mu_{k+1}(x),\ldots,\mu_{d}(x) \right), 
 \]
 where $\nu_1(x),\ldots,\nu_k(x)$ are holomorphic functions defined on $\mathcal{W}$ so that none of them is  identically equal to zero on   $(x_1=0)$, and $n_p\geq 0$ for $p=1,\ldots,k$. 
 
 We thus can choose a slice $S:=\{(x_1,\ldots,x_d)\mid \{|x_1|\leq \ep, x_2=\zeta_2,\ldots,x_d=\zeta_d  \}\subset \mathcal{W}$ so that  $\nu_i(x)\neq 0$  for any $x\in S$ and any $i=1,\ldots,k$. Let us define a loop $e(\theta)\colon [0,1]\to  \cW^*:=\mathcal{W}-\tilde{D}$ by $  e(\theta):=(\ep e^{2\pi i \theta},\zeta_2,\ldots,\zeta_d) $ which is the generator of $\pi_1(\cW^*, x_0)$, where    $x_0\in \cW^*$ is a point with $\mu(x_0)=y_\alpha\in \cU^*$. By \emph{Cauchy's argument principle}, the winding number of $\mu_p\circ e(\theta)$ around $0$ is $n_p$ for $p=1,\ldots,k$.  Hence by the  diagram
\[
 \begin{tikzcd}
 \pi_1(\mathcal{W}^*, x_0) \arrow[r,"\nu_*"] \arrow[d,"\simeq"] & \pi_1(\mathcal{U}^*, y_\alpha)\arrow[d]\\
 \bZ \arrow[r] & \bZ^k\rlap{\,,}
 \end{tikzcd}
\]
 one has $\nu_*(1)=(n_1,\ldots,n_p)$. 
 
 Pick a path $\tilde{h}\colon [0,1]\to \tilde{U}$ connecting $x$ and $x_0$, which lifts the above path $h_\alpha\colon [0,1]\to U$. Set $\tilde{\gamma}_0\in \pi_1(\tilde{U}, x)$ to be the equivalence class of the loop $\tilde{h}^{-1}\cdot e\cdot \tilde{h}$. Then
\[
 \nu_*(\tilde{\gamma}_0)=\left[h_\alpha^{-1}\cdot \left(e_1^{(\alpha)}\right)^{n_1}\cdots  \left(e_k^{(\alpha)}\right)^{n_k}\cdot h_\alpha\right]=\left(\gamma_1^{(\alpha)}\right)^{n_1}\cdots  \left(\gamma_k^{(\alpha)}\right)^{n_k}
\]
 Therefore,  
 $$
 \left(T_1^{(\alpha)}\right)^{n_1}\cdots  \left(T_k^{(\alpha)}\right)^{n_k}=\rho\left( \nu_*(\tilde{\gamma}_0)\right)\in \tilde{\Gamma}.
 $$
 By \eqref{eq:choice} and \eqref{eq:choice2}, either
 $\rho\left( \nu_*(\tilde{\gamma}_0)\right)=0$, 
   or there is some $i\in \{1,\ldots,k\}$ so that $n_i\geq m$. The first case means that the local monodromy of the pull-back $\bC$-PVHS around $\tilde{D}_j$ is trivial. In the latter case, one has
 $$
 {\ord}_{\tilde{D}_j}(\mu^*D)=\sum_{i=1}^{k}n_i\geq m. $$
The claim is proved.
	\end{proof}

\noindent {\it Step 2}.
Set $D_X\subset D$ to be the sum of  all $\tilde{D}_j$ so that the local monodromy group of the pull-back $\bC$-PVHS around $\tilde{D}_j$ is \emph{not} trivial. Then by the dichotomy in \cref{claim},  $
\mu^*D-mD_X
$ is an effective divisor, and the   pull-back $\bC$-PVHS on $\tilde{U}$ around $\tilde{D}_i$ with $\tilde{D}_i\not\subset D_X$ is trivial. Note that the   pull-back $\bC$-PVHS extends to a $\bC$-PVHS defined over $X-D_X$.

By the second condition in \cref{cond:monodromy},  $(E,\theta)$ is the canonical extension (in the sense of \cref{def:canonical})   of some system of   Hodge bundles $(\tilde{E}=\oplus_{p+q=\ell}\tilde{E}^{p,q},\tilde{\theta},h_{\hod})$ defined over $Y-D$. Hence for any  admissible coordinate $(\cU; z_1,\ldots,z_d)$  and any holomorphic frame $(e_1,\ldots, e_{r'})|_{U}$ for $E^{p,q}$, one has
 \begin{align*}  
   |e_j|_{\hod}\lesssim\frac{1}{\prod_{i=1}^{k}|z_i|^{\ep}} 
 \end{align*}
 for all $\ep>0$. If we take an admissibe coordinate $(\cW;x_1,\ldots,x_d)$ with $\cW\cap \tilde{D}=(x_1\cdots x_c=0)$ and $\mu(\cW)\subset \cU$, one can see that 
 \begin{align*}  
 |\mu^*e_j|_{\mu^*{\hod}}\lesssim\frac{1}{\prod_{i=1}^{c}|x_i|^{\ep\cdot n_i}} \end{align*}
for all $\ep>0$. Here $n_i:={\ord}_{(x_i=0)}(\mu^*(z_1\cdots z_k))$.  
It then follows from the definition of the extension \eqref{def:prolong} that
\begin{align}\label{eq:inclusion}
\mu^*E^{p,q}\subset \dia(\mu^*\tilde{E}^{p,q}). 
\end{align} 
 
 Note that $\mu^*(\tilde{E},\tilde{\theta},h_{\hod})$ is still a system of Hodge bundles over $\tilde{U}$, which  corresponds to the pull-back of the given  $\bC$-PVHS on $U$.  Recall that the   pull-back $\bC$-PVHS extends to a $\bC$-PVHS defined over $X-D_X$. Hence $\mu^*(\tilde{E},\tilde{\theta},h_{\hod})$ extends to   a system of Hodge bundles over $X-D_X$.
 
 We   denote by $(G=\oplus_{p+q=\ell} G^{p,q}, \eta=\oplus_{p+q=\ell}\eta_{p,q})$ the canonical extension (in the sense of \cref{def:canonical}) of $\mu^*(\tilde{E},\tilde{\theta},h_{\hod})$ (which is defined over $X-D_X$) over the log pair $(X,D_X)$, which is thus a system of log Hodge bundles on $(X,D_X)$. In particular, one has
 \begin{align}\label{eq:regular}
 \eta_{p,q}\colon G^{p,q}\lra G^{p-1,q+1}\otimes \Omega_X^1(\log D_X).
 \end{align}
 By \cref{splitting}, one has
\begin{align}\label{equal}
 G^{p,q}=\dia(\mu^*\tilde{E}^{p,q}). 
\end{align}
 Since $L$ is a subsheaf of $E^{p_0,\ell-p_0}$, by \eqref{eq:inclusion} and \eqref{equal},  one has
 $$
 \mu^*L\subset \mu^*E^{p_0,q_0}\subset G^{p_0,q_0}.
 $$ 
 Recall that  $\mu^*D- mD_X$ is an effective divisor and $L-\frac{\ell+1}{m}D$ is a big $\mathbb{Q}$-line bundle.  Write $\tilde{L}:=\mu^*L-\ell D_X$. Then $\tilde{L}$ and $\tilde{L}-D_X$  are both big  line bundles. The above inclusion yields
\begin{align}\label{eq:local}
 \tilde{L}\otimes \cO_X(\ell D_X)\subset G^{p_0,q_0}. 
 \end{align}
 
\medskip

\noindent {\it Step 3}. 
 Now we iterate  $\eta$ $k$ times as in \Cref{sec:iterate} to obtain a morphism 
 \begin{align}\label{iterate2}
  G^{p_0,\ell-p_0}\lra G^{p_0-k,\ell-p_0+k}\otimes  {\Sym}^{k} \Omega_X^1(\log D_X).
  \end{align} 
The inclusion \eqref{eq:local} then induces  a morphism 
\begin{align}\label{eq:eta}
\kappa_k\colon \tilde{L}\otimes \cO_X(\ell D_X)\lra G^{p_0-k,\ell-p_0+k}\otimes  {\Sym}^{k} \Omega_X^1(\log D_X). 
\end{align}
 Write $k_0$ for the largest $k$ so that $\kappa_{k_0}$ is non-trivial. Then $0\leq k_0\leq p_0\leq \ell$. 
Let us denote by $N_p$ the kernel of $\theta_{p,\ell-p}$. Hence $\kappa_{k_0}$ admits a factorization
$$
\kappa_{k_0}\colon \tilde{L}\otimes \cO_X(\ell D_X)\lra N_{p_0-k_0}\otimes  {\Sym}^{k_0} \Omega_X^1(\log D_X).
$$
We first note that  $k_0>0$; or else, there is a morphism from the big line bundle $ \tilde{L}\otimes \cO_X(\ell D_X)$ to $N_{p_0}$, whose dual $N_{p_0}^*$ is weakly positive in the sense of Viehweg by   \cite{Bru17} (see also \cite[Theorem 4.6]{Den20}). 
Hence $\kappa_{k_0}$ induces  
$$
\tilde{L}\lra  N_{p_0-k_0}\otimes  {\Sym}^{k_0} \Omega_X^1(\log D_X)\otimes \cO_X(-\ell D_X)\subset N_{p_0-k_0}\otimes{\Sym}^{k_0} \Omega_X^1 
$$
due to $k_0\leq p_0\leq \ell$. 
In other words, there exists a non-trivial morphism
$$
\tilde{L}\otimes   N^*_{p_0-k_0}\lra {\Sym}^{k_0} \Omega_X^1. 
$$
Recall that $N^*_{p_0-k_0}$ is weakly positive. The torsion-free coherent sheaf $\tilde{L}\otimes   N^*_{p_0-k_0}$ is big in the sense of Viehweg.  Hence there is an $\alpha>0$ so that
$$
{\Sym}^\alpha\left(\tilde{L}\otimes   N^*_{p_0-k_0}\right)\otimes \cO_X(-A)
$$
is generically globally generated for some ample divisor $A$. One thus has a non-trivial morphism
$$
\cO_X(A)\lra {\Sym}^{\alpha k_0}\Omega_X^1.
$$
By a theorem of Campana--P\u{a}un \cite[Corollary 8.7]{CP19}, $X$ is of   general type.

\medskip

\noindent {\it Step 4}. 
Let us prove that $X$ is both pseudo Picard and pseudo Kobayashi hyperbolic. Note that $\kappa_k$ in \eqref{eq:eta} induces a morphism
$$
\tau_k\colon{\Sym}^kT_{X}(-\log D_X)\lra G^{p_0-k,\ell-p_0+k}\otimes  \tilde{L}^{-1}\otimes \cO_X(-\ell D_X)
$$
By \cref{Deng}, we know that $\tau_1$ is   injective on a Zariski open set $\tilde{U}'\subset \tilde{U}$. The morphism    $\tau_k$ induces a morphism
$$
\tilde{\tau}_k\colon {\Sym}^kT_{X}\lra {\Sym}^kT_{X}(-\log D_X)\otimes \cO_X(\ell D_X) \lra G^{p_0-k,\ell-p_0+k}\otimes  \tilde{L}^{-1}
$$
which coincides with $\tau_k$ over $\tilde{U}$.  Hence $\tilde{\tau}_1$ is also injective over $\tilde{U}'$. By \Cref{singular metric}, we can take a singular hermitian metric  $h_{\tilde{L}}$ for $\tilde{L}$ so that  \(h:=h_{\tilde{L}}^{-1}\otimes \tilde{h}_{\hod} \) on \(  G\otimes  \tilde{L}^{-1}\) is locally bounded  on \(Y\) and smooth outside \(D_X\cup \mathbf{B}_+(\tilde{L}-D_X)\), where $\tilde{h}_{\hod}$ is the Hodge metric for the system of Hodge  bundles $(G,\eta)|_{X-D_X}$.  Moreover, \(h\) vanishes  on \(D_X\cup \mathbf{B}_+(\tilde{L}-D_X)\).  This  metric \(h \) on \( G\otimes  \tilde{L}^{-1}\) induces  a Finsler metric \(F_k\) on \( T_X \) defined as follows: for any \(e\in   T_{X,x} \),
$$
F_k(e):=  h\left(\tilde{\tau}_k\left(e^{\otimes k}\right)\right)^{\frac{1}{k}}
$$ 
We   apply the same method as in \Cref{sec:construction} to construct a new Finsler metric $F$   on $T_X$ by taking a convex sum in the form
$$
F:=\sqrt{\sum_{i=1}^{k_0}\alpha_iF_i^2}, 
$$
where \(\alpha_1,\ldots,\alpha_{k_0}\in \mathbb{R}^+ \) are certain constants.  This Finsler metric $F$ on $T_X$  is   positive definite over $\tilde{U}^\circ:=\tilde{U}'-\mathbf{B}_+(\tilde{L}-D_X)$ as $\tilde{\tau}_1$ is   injective over $\tilde{U}'$ and $h$ is smooth on $\tilde{U}- \mathbf{B}_+(\tilde{L}-D_X)$. Set $Z:=X\setminus \tilde{U}^\circ$, which is a proper Zariski closed subvariety of $X$. By \cref{curvature estimate} one can choose  \(\alpha_1,\ldots,\alpha_{k_0}\in \mathbb{R}^+ \) properly  so that for  any $\gamma\colon C\to X$ with $C$ an open subset of $\bC$ and $\gamma(C)\cap \tilde{U}^\circ\neq \varnothing$,   one has
\begin{align} \label{eq:curvature}
\hess \log|\gamma'(t)|_{F}^2\geq  \gamma^*\omega 
\end{align} 
for some fixed smooth K\"ahler form $\omega$ on $X$. Indeed, it follows from the proof of \cref{curvature estimate} that there is an  open subset $C^\circ$ of $C$ whose complement is a discrete set such that \eqref{eq:curvature} holds over $C^\circ$. By \cref{def:Finsler}, $|\gamma'(t)|_F^2$ is continuous and locally bounded from above over   $C$,  and by the extension theorem of  subharmonic functions, \eqref{eq:curvature}  holds over the whole unit disk $C$.   Applying  \cref{thm:criteria} to \eqref{eq:curvature}, we conclude that $X$ is Picard hyperbolic modulo $Z$.  Hence \cref{pseudo Picard} follows. 

Let $C$ be an irreducible compact curve in $X$ not contained in $Z$. Write $h_{\tilde{C}}$ the induced singular hermitian metric for $T_{\tilde{C}}$ by $F$, where $\tilde{C}$ is the normalization of $C$.   Then by \eqref{eq:curvature}, one has
$$
2g(\tilde{C})-2=-\sn\Theta_{h_{\tilde{C}}}(T_{\tilde{C}})\geq \deg_{\omega}(C).
$$
This proves \cref{pseudo algebraic}.

By \cref{def:Finsler} again, there is an $\varepsilon>0$ so that $\omega\geq \ep F^2$. Hence \eqref{eq:curvature} implies that
$$
	\frac{\d^2  \log|\gamma'(t)|_{F}^2}{\d t\d \bar{t}} \geq \ep |\gamma'(t)|_{F}^2
$$
for any $\gamma\colon \Delta\to X$ with $\gamma(\Delta)\cap \tilde{U}^\circ\neq \varnothing$. In other words, the  holomorphic sectional curvature of $F$ is bounded from above by the negative constant $- \ep$ (see \cite[Theorem 2.3.5]{Kob98}). By the Ahlfors--Schwarz lemma, we conclude that  $X$ is Kobayashi hyperbolic modulo $Z$ (see \cite[Lemma 2.4]{Den18}). This proves \cref{pseudo Kobayashi}. 
 The theorem is proved.
\end{proof}  
During the above proof, we indeed obtained the following result.

\begin{thm}\label{thm:new}
	Let $(Y,D)$ be a compact K\"ahler log pair, and let  $(E,\theta)=(\oplus_{p+q=\ell}E^{p,q},\oplus_{p+q=\ell}\theta_{p,q})$ be a system of log Hodge bundles  on  $(Y,D)$   satisfying the following conditions: 
	\begin{enumerate}
		\item The pair  $(E,\theta) $ is the canonical extension of some system of Hodge bundles over $Y-D$ of weight $\ell$.
		\item  There is a big   line bundle $L$ over $Y$ such that $L\otimes \cO_X(\ell D)\subset E^{p_0,q_0}$ for some $p_0+q_0=\ell$.
		\item The line bundle $L\otimes \cO_X(-D)$ is  still big.
	\end{enumerate} 
Then  there is a proper Zariski closed subset $Z\subsetneq Y$ so that 
	\begin{thmlist}
		 	\item $Y$ is  Kobayashi hyperbolic modulo $Z$;
		\item $Y$ is  Picard hyperbolic modulo $Z$;
		\item $Y$ is algebraically hyperbolic modulo $Z$.
		\item  $Y$ is of general type; 
		\end{thmlist}
\end{thm}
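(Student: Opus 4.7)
The proof will essentially replay Steps~4 and~5 of the proof of \cref{strong}, which depend only on the three hypotheses listed here rather than on how they were produced there. The canonical-extension hypothesis~(1) gives a logarithmic Higgs field $\theta_{p,q}\colon E^{p,q} \to E^{p-1,q+1} \otimes \Omega_Y^1(\log D)$ and the standard parabolic growth of the harmonic metric near $D$. Hypotheses~(2)--(3) play exactly the roles of the inclusion $\tilde L \otimes \mathcal{O}_X(\ell D_X) \subset G^{p_0,q_0}$ and of the bigness of $\tilde L - D_X$ in the proof of \cref{strong}; no finite cover or choice of $X$ is required here, so the argument simplifies accordingly.

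\textbf{General type.} I would iterate the Higgs field starting from the inclusion in hypothesis~(2) to produce, for each $k \geq 0$,
\[
\kappa_k\colon L \otimes \mathcal{O}_Y(\ell D) \to E^{p_0-k,q_0+k} \otimes {\rm Sym}^k \Omega_Y^1(\log D),
\]
and let $k_0$ be the largest integer with $\kappa_{k_0} \neq 0$. Then $\kappa_{k_0}$ factors through $N \otimes {\rm Sym}^{k_0}\Omega_Y^1(\log D)$, where $N := \ker \theta_{p_0-k_0,\,\ell-p_0+k_0}$. By Brunebarbe \cite{Bru17} (extending Zuo \cite{Zuo00}), $N^\ast$ is weakly positive in the sense of Viehweg; combined with bigness of $L$ this forces $k_0 \geq 1$. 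Since $k_0 \leq p_0 \leq \ell$, the twist $\mathcal{O}_Y(\ell D)$ absorbs the $k_0$ logarithmic poles, yielding a non-trivial map $L \otimes N^\ast \to {\rm Sym}^{k_0}\Omega_Y^1$ into honest symmetric differentials. The left-hand side is big, so some symmetric power produces a non-trivial map $\mathcal{O}_Y(A) \to {\rm Sym}^{\alpha k_0}\Omega_Y^1$ with $A$ ample, and \cite[Corollary 8.7]{CP19} of Campana--P\u{a}un delivers~(iv).

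\textbf{Hyperbolicity modulo $Z$.} For (i)--(iii) I would dualise to obtain
\[
\tilde\tau_k\colon {\rm Sym}^k T_Y \to E^{p_0-k,q_0+k} \otimes L^{-1},
\]
where again the factor $\mathcal{O}_Y(\ell D)$ exactly cancels the log poles because $k \leq k_0 \leq \ell$, and observe that $\tilde\tau_1$ is generically injective by the infinitesimal Torelli statement \cref{Deng}. Using the singular metric on $L$ from \cref{singular metric} together with the Hodge metric on $E$, I would follow \cref{sec:construction} verbatim: define Finsler metrics $F_k(v) := h(\tilde\tau_k(v^{\otimes k}))^{1/k}$ on $T_Y$ for $k = 1,\dots,k_0$, then take a convex combination $F := \sqrt{\sum_{i=1}^{k_0} \alpha_i F_i^2}$ with constants chosen as in \cref{curvature estimate}. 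This gives a Finsler metric on $T_Y$ that is positively definite on some Zariski-open $U^\circ \subset Y-D$ and satisfies
\[
\sqrt{-1}\,\partial\bar\partial \log |\gamma'(t)|_F^2 \;\geq\; \gamma^*\omega
\]
for every holomorphic $\gamma\colon C \to Y$ from an open $C \subset \bC$ with $\gamma(C) \cap U^\circ \neq \varnothing$. Setting $Z := Y \setminus U^\circ$, the criterion \cref{thm:criteria} gives Picard hyperbolicity modulo~$Z$, the Ahlfors--Schwarz lemma (applied as in \cite[Lemma~2.4]{Den18}) gives Kobayashi hyperbolicity modulo~$Z$, and integrating the curvature inequality over the normalisation of any curve $C \not\subset Z$ gives $2g(\tilde C)-2 \geq \deg_\omega(C)$, establishing algebraic hyperbolicity modulo~$Z$.

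\textbf{Where the real work sits.} The heavy lifting has already been done upstream: the positivity input \cref{singular metric}, the generic injectivity of $\tau_1$ in \cref{Deng}, the curvature estimate \cref{curvature estimate}, and Brunebarbe's negativity of kernels. The only point that needs genuine care is the twist bookkeeping: hypothesis~(2) carries the extra factor $\mathcal{O}_Y(\ell D)$ precisely so that iterating $\theta$ up to $k_0 \leq \ell$ times still lands in $\Omega_Y^1$ rather than $\Omega_Y^1(\log D)$, which is what makes the Campana--P\u{a}un step and the Finsler construction go through. Verifying the bound $k_0 \leq \ell$ (via $k_0 \leq p_0 \leq \ell$) is thus the one conceptual step one must not elide.
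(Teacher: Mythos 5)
Your proposal is correct and follows precisely the route the paper takes: the theorem is stated immediately after the proof of \cref{strong} with the remark that it was ``indeed obtained'' during that proof, so the intended argument is exactly to replay Steps~4--5 of the proof of \cref{strong} with $(E,\theta)$ and $L$ in place of $(G,\eta)$ and $\tilde L$. You correctly identify the only bookkeeping point that needs verifying, namely $k_0 \leq p_0 \leq \ell$ so the twist $\cO_Y(\ell D)$ absorbs the log poles in both the Campana--P\u{a}un step (yielding a map into ${\rm Sym}^{k_0}\Omega_Y^1$) and the Finsler construction (yielding $\tilde\tau_k$ on ${\rm Sym}^k T_Y$). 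One small precision worth making explicit when invoking \cref{Deng} and \cref{singular metric}: those statements are phrased for the setup of \cref{prop:new}, so you should apply them with $L' := L\otimes\cO_Y(\ell D)$ in the role of that theorem's big line bundle; the hypotheses of \cref{prop:new} then hold because $L' = L\otimes\cO_Y(\ell D)$ and $L'\otimes\cO_Y(-D) = L\otimes\cO_Y((\ell-1)D)$ are both big as sums of a big bundle with an effective one. With that substitution, $L'^{-1} = L^{-1}\otimes\cO_Y(-\ell D)$ and your $\tilde\tau_k$ is exactly the composite ${\rm Sym}^k T_Y \hookrightarrow {\rm Sym}^k T_Y(-\log D)\otimes\cO_Y(\ell D) \to E^{p_0-k,q_0+k}\otimes L^{-1}$.
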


Now we are able to prove \cref{thmx:refined}.

\begin{proof}[Proof of \cref{thmx:refined}]
 By  Steps  1--3 in the proof of \cref{strong},  we can construct a projective log pair $(X,D)$ and a log morphism $\mu\colon (X,\tilde{D})\to(Y,D)$ which is a finite \'etale cover over $U$. Over $(X,D)$, there is a system of log Hodge bundles  $(G=\oplus_{p+q=\ell}G^{p,q},\eta=\oplus_{p+q=\ell}\eta_{p,q})$  satisfying the following properties:
\begin{enumerate}
	\item The pair $(G,\eta)$ is the canonical extension of some system of Hodge bundle on $X-D_X$,  where $D_X$ is a reduced simple normal crossing divisor supported on $D$.
	\item \label{2} There is a big   line bundle $\tilde{L}$ on $X$ so that $\tilde{L}\otimes \cO(-D_X)$ is also big,
	\item \label{3}  There is an inclusion $\tilde{L}\otimes \cO_X(\ell D_X)\subset G^{p_0,\ell-p_0}$ for some $p_0> 0$. 
\end{enumerate}  
Since the period map has zero-dimensional fibers,  by \cref{augmented base} and the construction of $\tilde{L}$ at the beginning of the proof of \cref{strong}, we moreover have that 
\begin{enumerate}[resume]
	\item \label{aug} the augmented base locus satisfies $\mathbf{B}_+(\tilde{L})\subset \tilde{D}$.
\end{enumerate}
 
Let $\tilde{Z}$ be any irreducible Zariski closed subvariety of $X$ of positive dimension which is not contained in~$\tilde{D}$. Take a resolution of
 singularities   $g\colon Z\to \tilde{Z}$  so that $D_Z:=\nu^{-1}(D_X)$ is simple normal crossing.  Then $g\colon (Z,D_Z)\to (X,D_X)$ is a log morphism which is generically finite.  
 
By \cref{aug}, we can see that  $L_Z:=g^*\tilde{L}$ is big. Since $g^*D_X-D_Z$ is an effective divisor,  $L_Z\otimes \cO_Z(-D_Z)$ is also big. By \cref{3}, one has
 $$
L_Z\otimes \cO_Z(\ell D_Z)\subset g^*\left(\tilde{L}\otimes \cO_X(\ell D_X)\right)\subset g^*G^{p_0,\ell-p_0}.
 $$
 For the $\bC$-PVHS corresponding to $(G,\eta)|_{X-D_X}$, we pull it back to $Z-D_Z$ via $g$ and denote by $(\tilde{E},\tilde{\theta})$ the induced system of Hodge bundles on $Z-D_Z$. Let $(E=\oplus_{p+q=\ell}E^{p,q}, \theta)$ be the canonical extension of such a system of Hodge bundles. In the same vein as the proof of \eqref{eq:inclusion}, one has
 $$
 g^*G^{p_0,\ell-p_0}\subset E^{p_0,\ell-p_0}.
 $$
 In summary, we construct a system of log Hodge bundles $(E=\oplus_{p+q=\ell}E^{p,q}, \theta)$ on $(Z,D_Z)$ satisfying the two conditions in \cref{thm:new}.    By \cref{thm:new},  $Z$ is of general type.  We have proved \cref{general type2}.
  
Let us prove \cref{Picard}. For any $\tilde{\gamma}\colon \Delta^*\to X$ whose image is not contained in $\tilde{D}$, let $\tilde{Z}$ be its Zariski closure. Take a desingularization $\nu\colon Z\to \tilde{Z}$ as above, and let $\gamma\colon\Delta^*\to Z$ be the lift of $\gamma$. By the above argument and \cref{thm:new},    $\gamma$ extends to a holomorphic map $\overline{\gamma}\colon\Delta\to Z$. Therefore,  $\nu\circ\overline{\gamma}$ extends $\tilde{\gamma}$. We proved \cref{Picard}. It is   easy to see that \cref{Picard} implies \cref{Brody}.

The proof of \cref{Algebraic} is exactly the same as that of \cref{main}. We will not repeat the arguments and leave the proof to the interested readers.
\end{proof}

We now show how to deduce \cref{Picard symmetric} from \cref{thmx:refined}. 

\begin{proof}[Proof of \cref{Picard symmetric}]
 By the work of Baily--Borel and Mok, we know that $U$ is  quasi-projective.  By the work of Deligne,  $U$ admits a  $\bC$-PVHS whose period map is immersive everywhere (see, \textit{e.g.}, \cite[Theorem 7.10]{Mil13}).  The corollary immediately follows from \cref{Picard}.
	\end{proof}

\begin{rem}\label{rem:compare}
\cref{Picard symmetric} unifies the previous result by Nadel who proved that $X$ is Brody hyperbolic modulo $X-\tilde{U}$. Applying \cref{general type2},  it also re-proves    theorems by Brunebarbe \cite{Bru16} and Cadorel \cite{Cad18}:  any positive-dimensional irreducible subvariety of $X$ not contained in $X-\tilde{U}$ is of general type. However, since our proof does not rely on  special properties of bounded symmetric domains (we use neither Mumford's work on toroidal compactifications nor the existence of variations of Hodge structures of Calabi-Yau type over quotients of bounded symmetric domains by arithmetic groups), we certainly loose the effectivity result regarding the level structures of the \'etale coverings, which is also a main result in  \cite{Nad89,Bru16,Cad18}.
\end{rem}

\end{document}